\tikzset{>=stealth',
        cvertex/.style={circle,draw=black,inner sep=1pt,outer sep=3pt},
        vertex/.style={circle,fill=black,inner sep=1pt,outer sep=3pt},
        star/.style={circle,fill=yellow,inner sep=0.75pt,outer sep=0.75pt},
        tvertex/.style={inner sep=1pt,font=\scriptsize},
        gap/.style={inner sep=0.5pt,fill=white}}
\author{J. Karmazyn}
\title{Quiver GIT for Varieties with Tilting Bundles}
\theoremstyle{plain} 
\newtheorem{VitalLemma1}{Lemma}[subsection]
\newtheorem{VitalLemma2}[VitalLemma1]{Lemma}
\newtheorem{VitalLemma3}[VitalLemma1]{Lemma}
\newtheorem{DerivedBaseChange}[VitalLemma1]{Lemma}
\newtheorem{DerivedBaseChangeCor}[VitalLemma1]{Corollary}
\newtheorem{BigTheorem1}[VitalLemma1]{Corollary}
\newtheorem{BigTheorem3}[VitalLemma1]{Theorem}
\newtheorem{ssModuliGIT}[VitalLemma1]{Theorem}
\newtheorem{sModuliGIT}[VitalLemma1]{Theorem}
\newtheorem{Surjectivity}[VitalLemma1]{Lemma}
\newtheorem{Representability}[VitalLemma1]{Theorem}
\newtheorem{TiltingBundles1dim}[VitalLemma1]{Theorem}
\newtheorem*{pBigTheorem3}{Theorem}
\newtheorem*{pBigTheorem1}{Corollary}
\newtheorem*{pRepresentability}{Theorem}
\newtheorem*{pFlopGIT}{Corollary}
\newtheorem*{SekiyaYamaura}{Theorem}
\newtheorem*{pRationalSurfaceSing}{Corollary}
\newtheorem{pMainTheoremCor}[VitalLemma1]{Corollary}
\newtheorem{ReconstructionAlg}[VitalLemma1]{Corollary}
\newtheorem{Tilting}[VitalLemma1]{Theorem}
\newtheorem{VanDenBerghFlops}[VitalLemma1]{Theorem}
\newtheorem{FlopGIT}[VitalLemma1]{Corollary}
\newtheorem{TiltingPullback}[VitalLemma1]{Proposition}
\newtheorem{HilbertModuliLemma}[VitalLemma1]{Lemma}
\newtheorem{RationalSurfaceSing}[VitalLemma1]{Corollary}
\newtheorem{StabilityConditionEquiv}[VitalLemma1]{Lemma}
\newtheorem{Progenerators}[VitalLemma1]{Proposition}
\newtheorem{PerverseHoms}[VitalLemma1]{Lemma}
\newtheorem{BijectiveConditions}[VitalLemma1]{Theorem}
\newtheorem{GeneralClosedImmersion}[VitalLemma1]{Corollary}
\newtheorem{AlternateModuli}[VitalLemma1]{Lemma}
\newtheorem{DualPullback}[VitalLemma1]{Lemma}
\newtheorem{SekiyaYamauraCorrection}{Proposition}[section]
\newtheorem{StableLemma}[VitalLemma1]{Lemma}
\newtheorem{EquivalencesLemma}[VitalLemma1]{Lemma}
\theoremstyle{definition}
\newtheorem{QGIT1}[VitalLemma1]{Definitions}
\newtheorem{QGIT2}[VitalLemma1]{Definition}
\newtheorem{QGIT3}[VitalLemma1]{Definitions}
\newtheorem{QGIT4}[VitalLemma1]{Definition}
\newtheorem{QGIT7}[VitalLemma1]{Definition}
\newtheorem{Perverse}[VitalLemma1]{Definition}
\newtheorem{TiltingDef}[VitalLemma1]{Definitions}
\newtheorem{resolution}[VitalLemma1]{Definitions}
\newtheorem{KleinianSingularitiesExample}[VitalLemma1]{Example}
\newtheorem{SurfaceQuotientSingularitiesExample}[VitalLemma1]{Example}
\newtheorem{DetermentalSingularitiesExample}[VitalLemma1]{Example}
\newtheorem{DimensionDefinition}[VitalLemma1]{Definitions}
\newtheorem{TiltingAssumptions}[VitalLemma1]{Assumption}
\newtheorem{BridgelandRemark}[VitalLemma1]{Remark}
\newtheorem{ModuliRemark}[VitalLemma1]{Remark}
\newtheorem{MonomorphismRemark}[VitalLemma1]{Remark}
\newtheorem{TautologicalBundleRemark}[VitalLemma1]{Remark}
\newtheorem{FunctorRemark}[VitalLemma1]{Remark}
\newtheorem*{EquivalenceExample}{Example}
\DeclareMathAlphabet{\mathbbm}{U}{bbm}{m}{n}
\let\oldtocsection=\tocsection
\let\oldtocsubsection=\tocsubsection
\let\oldtocsubsubsection=\tocsubsubsection
\renewcommand{\tocsection}[2]{\hspace{0em}\oldtocsection{#1}{#2}}
\renewcommand{\tocsubsection}[2]{\hspace{1em}\oldtocsubsection{#1}{#2}}
\renewcommand{\tocsubsubsection}[2]{\hspace{2em}\oldtocsubsubsection{#1}{#2}}
\newcommand{\Hom}{\textnormal{Hom}}
\newcommand{\Tot}{\textnormal{Tot}}
\newcommand{\CalHom}{\mathcal{H}\textnormal{om}}
\newcommand{\RCalHom}{\mathcal{RH}\textnormal{om}}
\newcommand{\Ext}{\textnormal{Ext}}
\newcommand{\Tor}{\textnormal{Tor}}
\newcommand{\End}{\textnormal{End}}
\newcommand{\CH}{\textnormal{H}}
\newcommand{\SL}{\textnormal{SL}}
\newcommand{\GL}{\textnormal{GL}}
\newcommand{\Spec}{\textnormal{Spec}}
\newcommand{\Image}{\textnormal{Im}}
\newcommand{\Hilb}{\textnormal{Hilb}}
\newcommand{\MaxSpec}{\textnormal{MaxSpec}}
\newcommand{\Proj}{\textnormal{Proj}}
\newcommand{\Per}{\textnormal{Per}}
\newcommand{\Coh}{\textnormal{Coh}}
\newcommand{\Perf}{\textnormal{Perf}}
\newcommand{\Rep}{\textnormal{Rep}}
\newcommand{\op}{\mathrm{op}}
\newcommand{\E}{\mathcal{E}}
\def\mod{\mathop{\textnormal{mod}}\nolimits}
\newcommand{\id}{\mathrm{id}}
\def\rk{\mathop{\rm rk}\nolimits}
\begin{document}

\maketitle
\begin{abstract}

In the setting of a variety $X$ admitting a tilting bundle $T$ we consider the problem of constructing $X$ as a quiver GIT quotient of the algebra $A:=\End_{X}(T)^{\op}$. We prove that if the tilting equivalence restricts to a bijection between the skyscraper sheaves of $X$ and the closed points of a quiver representation moduli functor for $A=\End_{X}(T)^{\op}$ then $X$ is indeed a fine moduli space for this moduli functor, and we prove this result without any assumptions on the singularities of $X$.

As an application we consider varieties which are projective over an affine base such that the fibres are of dimension ≤1, and the derived pushforward of the structure sheaf on X is the structure sheaf on the base. In this situation there is a particular tilting bundle on X constructed by Van den Bergh, and our result allows us to reconstruct X as a quiver GIT quotient for an easy to describe stability condition and dimension vector.  This result applies to flips and flops in the minimal model program, and in the situation of flops shows that both a variety and its flop appear as moduli spaces for algebras produced from different tilting bundles on the variety.

We also give an application to rational surface singularities, showing that their minimal resolutions can always be constructed as quiver GIT quotients for specific dimension vectors and stability conditions. This gives a construction of minimal resolutions as moduli spaces for all rational surface singularities, generalising the G-Hilbert scheme moduli space construction which exists only for quotient singularities.

\end{abstract}

\section{Introduction}
\subsection{Overview}

Any variety $X$ equipped with a tilting bundle $T$ induces a derived equivalence between the bounded derived category of coherent sheaves on $X$ and the bounded derived category of finitely generated left modules for the algebra $A:=\End_{X}(T)^{\op}$. This situation  is similar to the case of an affine variety $\Spec(R)$ where we can construct the commutative algebra $R=\End_X(\mathcal{O}_X)^{\op}$ and there is an abelian equivalence between coherent sheaves on $\Spec(R)$ and finitely generated left $R$-modules. However, whereas in the affine case we can recover the variety $\Spec(R)$ from the algebra $R$, it is not so clear how to recover the variety $X$ from the algebra $A$. One possibility is to present $A$ as the path algebra of a quiver with relations, construct the a moduli space of quiver representations for some dimension vector and stability condition, and attempt to relate this moduli space back to $X$.

While this approach may not work in general there are many examples where this is known to be successful, such as del Pezzo surfaces \cite{KingTilt,CrawWinnMoriDreamSpaces}, minimal resolutions of Kleinian singularities  \cite{KronheimerALE,CassensSlodowy, ExceptionalFiberCrawleyBoevey}, and crepant resolutions of Gorenstein quotient singularities in dimension 3 \cite{McKayBKR,CrawIshii}, which lead us to hope it may work in some other interesting settings.

In this paper we will determine conditions for $X$ to be a fine moduli space for the quiver representation moduli functor  $\mathcal{F}_{A}$, (Section \ref{Quiver GIT moduli functors}), and this will allow us to prove that X is a quiver GIT quotient for a specific stability condition and dimension vector in a large class of examples. These examples include applications to the minimal model program and to resolutions of rational surface singularities. 

This problem was also considered by Bergman and Proudfoot, \cite{BergmanProudfootGIT}, who study embeddings of closed points and tangent spaces to show that a smooth variety is a connected component of the quiver GIT quotient for `great' stability condition and dimension vector. However, their approach cannot be extended to singular varieties and it can be difficult to identify which conditions are `great'. The methods developed in this paper have the advantages of applying to singular varieties, such as those occurring in the minimal model program, and allowing us to identify a specific stability condition and dimension vector in applications.

\subsection{Comparing Moduli Functors}
In developing methods to understand quiver representation moduli functors we are inspired by the following result of Sekiya and Yamaura \cite{SekiyaYamaura}.

\begin{SekiyaYamaura}[{\cite[Theorem 4.20]{SekiyaYamaura}}] Let $B$ be an algebra with tilting module $T$. Define $A=\End_{B}(T)^{\op}$, suppose that both $A$ and $B$ are presented as  path algebras of quivers with relations, and let $\mathcal{F}_{A}$ and $\mathcal{F}_{B}$ denote quiver representation moduli functors on $A$ and $B$ for some choice of stability conditions and dimension vectors. Then if the tilting equivalences 
\begin{center}
\[
\begin{tikzpicture} [bend angle=15, looseness=1]
\node (C1) at (0,0)  {$D^b(B$-$\mod)$};
\node (C2) at (4,0)  {$D^b(A$-$\mod)$};
\draw [->,bend left] (C1) to node[above]  {$\scriptstyle{\mathbb{R}\Hom_{B}(T,-)}$} (C2);
\draw [->,bend left] (C2) to node[below]  {\scriptsize{$ T\otimes ^{\mathbb{L}}_{A}(-)$}} (C1);
\end{tikzpicture}
\]
\end{center}
 restrict to a bijection between $\mathcal{F}_{B}(\mathbb{C})$ and $\mathcal{F}_{A}(\mathbb{C})$ then $\mathcal{F}_{B}$ is naturally isomorphic to $\mathcal{F}_{A}$. 
\end{SekiyaYamaura}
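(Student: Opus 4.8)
The plan is to upgrade the bijection on $\mathbb{C}$-points to an isomorphism of functors by spreading the tilting equivalence out over an arbitrary base scheme. First I would record what the hypothesis actually provides. A point of $\mathcal{F}_{B}(\mathbb{C})$ is the S-equivalence class of a $\theta_{B}$-semistable $B$-module $M$ of the fixed dimension vector $\mathbf{d}_{B}$; for $\mathbb{R}\Hom_{B}(T,-)$ even to induce the claimed map $\mathcal{F}_{B}(\mathbb{C})\to\mathcal{F}_{A}(\mathbb{C})$ it must send each stable such $M$ — these being genuine closed points — to a $\theta_{A}$-semistable $A$-module of dimension vector $\mathbf{d}_{A}$ concentrated in homological degree $0$, i.e. $\Ext^{i}_{B}(T,M)=0$ for $i>0$; a short induction along Jordan--H\"older filtrations then gives the same for every $\theta_{B}$-semistable $M$ of dimension $\mathbf{d}_{B}$. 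Dually, the reverse direction of the bijection says $T\otimes^{\mathbb{L}}_{A}N$ is concentrated in degree $0$ (i.e. $\Tor^{A}_{i}(T,N)=0$ for $i>0$) and $\theta_{B}$-semistable of dimension $\mathbf{d}_{B}$ for every $\theta_{A}$-semistable $N$ of dimension $\mathbf{d}_{A}$.

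Next I would globalise. Fix a $\mathbb{C}$-scheme $S$ and $\mathcal{M}\in\mathcal{F}_{B}(S)$, which by definition of the quiver moduli functor is an $\mathcal{O}_{S}$-flat $(B\otimes_{\mathbb{C}}\mathcal{O}_{S})$-module, locally free of finite rank over $\mathcal{O}_{S}$, with fibres $\theta_{B}$-semistable $B$-modules of dimension $\mathbf{d}_{B}$. Applying $\Hom_{B}(-,\mathcal{M})$ to a finite resolution of $T$ by finitely generated projective $B$-modules represents $\mathbb{R}\Hom_{(B\otimes\mathcal{O}_{S})}(T\otimes\mathcal{O}_{S},\mathcal{M})$ by a bounded complex of $\mathcal{O}_{S}$-locally free modules whose formation commutes with any base change $S'\to S$; its fibre at a closed point $s$ is $\mathbb{R}\Hom_{B}(T,\mathcal{M}_{s})$, which by the first paragraph is a module in degree $0$. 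A cohomology-and-base-change argument now applies: the top cohomology of the complex vanishes fibrewise, hence vanishes by Nakayama, hence the top differential is a split surjection of locally frees, and one inducts downward to conclude the complex is quasi-isomorphic to a single $\mathcal{O}_{S}$-locally free module $\mathcal{N}$ in degree $0$, still after arbitrary base change. Since $\mathcal{N}$ has the right fibres, $\mathcal{N}\in\mathcal{F}_{A}(S)$, and base-change compatibility makes $\mathcal{M}\mapsto\mathcal{N}$ a natural transformation $\eta\colon\mathcal{F}_{B}\to\mathcal{F}_{A}$; running the identical argument with $(T\otimes\mathcal{O}_{S})\otimes^{\mathbb{L}}_{(A\otimes\mathcal{O}_{S})}(-)$ and the reverse direction of the bijection produces $\mu\colon\mathcal{F}_{A}\to\mathcal{F}_{B}$.

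Finally I would check $\eta$ and $\mu$ are mutually inverse. The complex $T\otimes_{\mathbb{C}}\mathcal{O}_{S}$ is perfect over $B\otimes\mathcal{O}_{S}$ with endomorphism ring $A\otimes\mathcal{O}_{S}$ and generates the derived category, so the absolute tilting equivalence base-changes to an equivalence between the derived category of $(B\otimes\mathcal{O}_{S})$-modules and that of $(A\otimes\mathcal{O}_{S})$-modules with $(T\otimes\mathcal{O}_{S})\otimes^{\mathbb{L}}_{(A\otimes\mathcal{O}_{S})}\mathbb{R}\Hom_{(B\otimes\mathcal{O}_{S})}(T\otimes\mathcal{O}_{S},-)\cong\id$ and symmetrically. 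Applied to $\mathcal{M}$, and using that every complex appearing is concentrated in degree $0$ so that this derived identity coincides with the module-level composite defining $\mu_{S}\circ\eta_{S}$, this gives $\mu_{S}\circ\eta_{S}=\id$ and $\eta_{S}\circ\mu_{S}=\id$, naturally in $S$, whence $\mathcal{F}_{B}\cong\mathcal{F}_{A}$. The main obstacle is the globalisation in the second paragraph — promoting the fibrewise vanishing of higher $\Ext$ and $\Tor$ (hypothesised only at reduced $\mathbb{C}$-points) to the assertion that the relative $\mathbb{R}\Hom$ and $\otimes^{\mathbb{L}}$ are genuine $\mathcal{O}_{S}$-flat modules stable under every base change. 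This works because $\mathbb{R}\Hom_{B}(T,-)$ has bounded amplitude and is computed by a finite complex of projectives, so every sheaf produced is assembled from locally frees and the splitting/Nakayama induction goes through for an arbitrary (possibly non-reduced, non-Noetherian) $S$; the identification of $\mathbb{C}$-points, functoriality, and the inverse check are then formal.
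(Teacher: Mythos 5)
Your argument is essentially the route the paper attributes to Sekiya--Yamaura and mirrors in its own geometric analogue (Theorem \ref{BigTheorem3} via Lemmas \ref{VitalLemma2} and \ref{VitalLemma3}): extract from the bijection on $\mathbb{C}$-points the vanishing of higher $\Ext$ and $\Tor$ on every closed fibre, use the finite projective resolution of the tilting module to represent the relative $\mathbb{R}\Hom$ and $\otimes^{\mathbb{L}}$ by bounded complexes of $\mathcal{O}_S$-locally frees compatible with base change, run the Nakayama/splitting induction to concentrate them in degree zero, and invoke the base-changed tilting equivalence to see the two transformations are mutually inverse. The only organisational difference from the paper is that it proves injectivity and surjectivity of the single transformation $\eta$ rather than building $\eta$ and $\mu$ separately; this is immaterial.

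Two small points. First, you never verify that your assignment descends to the equivalence classes defining the functor, i.e.\ that $\mathbb{R}\Hom_{B\otimes R}(T\otimes R,\mathcal{M}\otimes_R L)\cong \mathbb{R}\Hom_{B\otimes R}(T\otimes R,\mathcal{M})\otimes_R L$ for $L$ an invertible $R$-module (and similarly for $T\otimes^{\mathbb{L}}$). This is exactly the extra condition Appendix \ref{QuiverModuliForAlgebra} singles out as the point where the original statement needs amending, so it should be stated explicitly; it is immediate from the finite projective resolution, but it is not nothing. Second, your closing remark that the argument works for arbitrary non-Noetherian $S$ is both unnecessary and slightly misleading: the functors are defined on finite type $\mathbb{C}$-algebras, and it is precisely the Jacobson/Nullstellensatz property of such rings that lets you pass from vanishing on $\mathbb{C}$-valued closed fibres (the only fibres the hypothesis controls) to vanishing of the finitely generated cohomology modules themselves.
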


This leads us to the idea of working with a moduli functor for which $X$ is a fine moduli space instead of working with $X$ itself, and we then prove the following variant of Sekiya and Yamaura's result.

\begin{pBigTheorem3}[Theorem \ref{BigTheorem3}]
Let $\pi: X \rightarrow \Spec(R)$ be a projective morphism of varieties. Suppose $X$ is equipped with a tilting bundle $T$, define $A=\End_{X}(T)^{\op}$, and suppose that $A$ is presented as a quiver with relations.  Let $\mathcal{F}_{A}$ be a quiver representation moduli functor on $A$ for some dimension vector and stability condition. Then if the tilting equivalences
\begin{center}
\[
\begin{tikzpicture} [bend angle=20, looseness=1]
\node (C1) at (0,0)  {$D^b(\Coh \, X)$};
\node (C2) at (4,0)  {$D^b( A$-$\mod)$};
\draw [->,bend left] (C1) to node[above]  {\scriptsize{$\mathbb{R}\Hom_{X}(T,-)$}} (C2);
\draw [->,bend left] (C2) to node[below]  {\scriptsize{$ T\otimes ^{\mathbb{L}}_{A}(-)$}} (C1);
\end{tikzpicture}
\]
\end{center} restrict to a bijection between $\mathcal{F}_X(\mathbb{C})$ and $\mathcal{F}_A(\mathbb{C})$ then $\mathcal{F}_{X}$ is naturally isomorphic to $\mathcal{F}_{A}$.

\end{pBigTheorem3}

We recall the definitions of the moduli functors $\mathcal{F}_A$ and $\mathcal{F}_X$ in Sections \ref{Quiver GIT moduli functors} and \ref{Geometric moduli functors}, and note in Appendix \ref{QuiverModuliForAlgebra} that \cite[Theorem 4.20]{SekiyaYamaura} should be stated for the functor defined in Section \ref{Quiver GIT moduli functors} rather than the functor originally defined in \cite[Section 4.2]{SekiyaYamaura}. The moduli functor $\mathcal{F}_X$ is similar to the Hilbert functor of one point on a variety, which is well-known to be represented by $X$, but for lack of a reference in this setting we provide a proof.
\begin{pRepresentability}[{Theorem \ref{Representability}}] Let $\pi:X \rightarrow \Spec(R)$ be a projective morphism of varieties. Then there is an natural isomorphism between the functor of points $\Hom_{\mathfrak{Sch}}(-,X)$ and the moduli functor $\mathcal{F}_{X}$. In particular $X$ is a fine moduli space for $\mathcal{F}_{X}$ with tautological bundle $\Delta_* \mathcal{O}_X$ on $X \times_{\Spec(\mathbb{C})} X$ where $\Delta$ is the diagonal inclusion.
\end{pRepresentability}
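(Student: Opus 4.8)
The plan is to construct natural transformations in both directions between $\Hom_{\mathfrak{Sch}}(-,X)$ and $\mathcal{F}_X$ and show they are mutually inverse. The moduli functor $\mathcal{F}_X$, as defined in Section~\ref{Geometric moduli functors}, should send a scheme $S$ to (isomorphism classes of) flat families over $S$ of the objects parametrised by $\mathcal{F}_X(\mathbb{C})$ — namely structure sheaves $\mathcal{O}_x$ of closed points, or more precisely quotients of a fixed generator with the appropriate numerical/support invariants — so a point of $\mathcal{F}_X(S)$ is a sheaf $\mathcal{E}$ on $X \times S$ flat over $S$ whose fibre over each closed point $s \in S$ lies in $\mathcal{F}_X(\mathbb{C})$. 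Given a morphism $f \colon S \to X$, I would assign to it the pullback $(f \times \id_X)^* \Delta_* \mathcal{O}_X$ on $X \times S$ (equivalently the structure sheaf of the graph $\Gamma_f \subseteq X \times S$); this is flat over $S$ and has the correct fibres, giving a natural transformation $\Hom_{\mathfrak{Sch}}(-,X) \Rightarrow \mathcal{F}_X$.

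The substance is the reverse direction: given $\mathcal{E} \in \mathcal{F}_X(S)$, produce a morphism $S \to X$. The key point is that such an $\mathcal{E}$ is supported on a closed subscheme $Z \subseteq X \times S$ that is finite over $S$ — indeed, fibrewise $\mathcal{E}$ is the structure sheaf of a single reduced point of $X$, so $Z \to S$ is finite, flat of degree one, hence an isomorphism. The composition $S \xleftarrow{\sim} Z \hookrightarrow X \times S \to X$ is then the desired morphism. To make this precise I would: (i) use the projectivity of $\pi \colon X \to \Spec(R)$ and the coherence of $\mathcal{E}$ to see that $\Supp(\mathcal{E})$ is proper over $S$; (ii) invoke flatness plus the fibrewise length-one condition together with the local criterion / cohomology-and-base-change to show $p_{S*}\mathcal{E}$ is a line bundle on $S$ and the natural map $\mathcal{O}_S \to p_{S*}\mathcal{E}$ (or the structure sheaf of $Z$) is an isomorphism after possibly twisting, identifying the finite scheme $Z$ with $S$; (iii) check that the resulting morphism $S \to X$ pulls $\Delta_* \mathcal{O}_X$ back to $\mathcal{E}$, which recovers the first natural transformation and shows the two constructions are inverse. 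Naturality in $S$ is then a formal diagram chase using compatibility of pullback with composition.

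The main obstacle is step (ii): one must be careful that the fibres of $\mathcal{E}$ really are structure sheaves of reduced closed points (not, say, length-one modules at a non-reduced point or sheaves with extra scheme structure), which is exactly what the definition of $\mathcal{F}_X(\mathbb{C})$ must guarantee — presumably by fixing the support to be zero-dimensional and the fibre to be a \emph{cyclic} module of length one over the relevant structure sheaf, i.e. a genuine skyscraper $\mathcal{O}_x$. Granting that, flatness over $S$ forces $Z \to S$ to be finite flat of degree one and hence an isomorphism by a standard argument (a finite flat morphism of degree one between varieties, or between a scheme and a variety, with reduced target is an isomorphism), and the rest is bookkeeping. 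A secondary subtlety is handling base schemes $S$ that are not of finite type or not reduced, where one should phrase everything via the local criterion of flatness and Grothendieck's base change theorem rather than appealing to geometric fibres alone; since the functor of points is determined by its values on affine $S$, it suffices to treat $S$ affine throughout.
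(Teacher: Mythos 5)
Your proposal is correct and follows essentially the same route as the paper: the forward map sends $g\colon \Spec(B)\to X$ to the pushforward of $\mathcal{O}_B$ along the graph $\Gamma_g$, and the inverse is obtained by showing the schematic support $Z$ of a family $\E$ maps isomorphically to the base, with $\E$ equivalent to $\iota_*\mathcal{O}_Z$ up to a line bundle twist (the paper's Lemma \ref{HilbertModuliLemma}, following Calabrese--Groechenig). The only cosmetic difference is that the paper establishes $Z\cong\Spec(B)$ via the chain of module-structure maps $\mathcal{O}_B\to\psi_*\mathcal{O}_Z\to\End(\psi_*\mathcal{G})$ rather than a ``finite flat of degree one'' argument, since flatness is known a priori only for $\E$ and not for $\mathcal{O}_Z$, but your step (ii) amounts to the same thing.
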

Combining these two results we have a method to show when a variety $X$ with tilting bundle $T$ can be recovered via quiver GIT as a fine moduli space for representations of the algebra $A=\End_{X}(T)^{\op}$. 

\begin{pMainTheoremCor} Let $\pi: X \rightarrow \Spec(R)$ be a projective morphism of varieties. Suppose $X$ is equipped with a tilting bundle, $T$,  define $A=\End_{X}(T)^{\op}$, and suppose that $A$ is presented as a quiver with relations. Let $\mathcal{F}_{A}$ be a quiver representation moduli functor on $A$ for some indivisible dimension vector $d$ and generic stability condition $\theta$. Then if the tilting equivalences
\begin{center}
\[
\begin{tikzpicture} [bend angle=20, looseness=1]
\node (C1) at (0,0)  {$D^b(\Coh \, X)$};
\node (C2) at (4,0)  {$D^b( A$-$\mod)$};
\draw [->,bend left] (C1) to node[above]  {\scriptsize{$\mathbb{R}\Hom_{X}(T,-)$}} (C2);
\draw [->,bend left] (C2) to node[below]  {\scriptsize{$ T\otimes ^{\mathbb{L}}_{A}(-)$}} (C1);
\end{tikzpicture}
\]
\end{center} restrict to a bijection between the skyscraper sheaves on $X$ and the $\theta$-stable $A$-modules with dimension vector $d$ then $X$ is a fine moduli space for $\mathcal{F}_A$ and the tautological bundle is the dual of the tilting bundle $T$.
\end{pMainTheoremCor}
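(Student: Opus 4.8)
The plan is to deduce this Corollary by combining Theorem \ref{BigTheorem3} with Theorem \ref{Representability}. The only real work is, first, to match the hypothesis as stated (in terms of skyscraper sheaves and stable modules) with the hypothesis of Theorem \ref{BigTheorem3} (in terms of the $\mathbb{C}$-points $\mathcal{F}_X(\mathbb{C})$ and $\mathcal{F}_A(\mathbb{C})$), and second, to identify the tautological bundle produced on $X$ once it has been shown to represent $\mathcal{F}_A$.

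First I would unwind the definitions of the two moduli functors on $\mathbb{C}$-points. By the definition of $\mathcal{F}_X$ in Section \ref{Geometric moduli functors} (equivalently, by Theorem \ref{Representability}, under which a $\mathbb{C}$-point corresponds to a closed point $x\in X$ with tautological object $\Delta_*\mathcal{O}_X$ restricting to $\mathcal{O}_x$), the set $\mathcal{F}_X(\mathbb{C})$ is exactly the set of skyscraper sheaves of $X$. By the definition of $\mathcal{F}_A$ in Section \ref{Quiver GIT moduli functors}, a $\mathbb{C}$-point is a $\theta$-semistable $A$-module of dimension vector $d$ up to $S$-equivalence; since $\theta$ is generic and $d$ is indivisible, every such module is $\theta$-stable and $S$-equivalence is isomorphism, so $\mathcal{F}_A(\mathbb{C})$ is the set of isomorphism classes of $\theta$-stable $A$-modules of dimension vector $d$. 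The hypothesis of the Corollary then says precisely that the tilting equivalences restrict to a bijection $\mathcal{F}_X(\mathbb{C})\xrightarrow{\ \sim\ }\mathcal{F}_A(\mathbb{C})$, so Theorem \ref{BigTheorem3} applies and yields a natural isomorphism $\mathcal{F}_X\cong\mathcal{F}_A$. Combining with Theorem \ref{Representability}, which says $X$ represents $\mathcal{F}_X$, we conclude that $X$ represents $\mathcal{F}_A$, i.e. $X$ is a fine moduli space for $\mathcal{F}_A$; note that the fineness of the quiver moduli is here obtained for free from the natural isomorphism, without a separate GIT argument.

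It remains to identify the tautological bundle. Under the natural isomorphism $\mathcal{F}_X\cong\mathcal{F}_A$ the tautological object of $\mathcal{F}_X$, namely $\Delta_*\mathcal{O}_X$ on $X\times_{\Spec\mathbb{C}}X$, is carried to its image under the family version of $\mathbb{R}\Hom_X(T,-)$ taken relative to the projection onto the moduli factor. Writing $p_1,p_2$ for the two projections $X\times_{\Spec\mathbb{C}}X\to X$ and $\Delta$ for the diagonal, I would compute this image using the projection formula (as $p_1\circ\Delta=\id$) together with $p_2\circ\Delta=\id$:
\[
\mathbb{R}(p_2)_*\,\RCalHom_{X\times X}\!\bigl(p_1^*T,\,\Delta_*\mathcal{O}_X\bigr)\ \cong\ \mathbb{R}(p_2)_*\,\Delta_*\bigl(\RCalHom_X(T,\mathcal{O}_X)\bigr)\ =\ \RCalHom_X(T,\mathcal{O}_X)\ =\ T^{\vee}.
\]
Thus the tautological object is the locally free sheaf $T^{\vee}$, with its natural $A=\End_X(T)^{\op}$-module structure; in particular it is a genuine bundle and not merely a complex, because $T$ is a bundle, and its fibre over the point corresponding to $\mathcal{O}_x$ is $\Hom_X(T,\mathcal{O}_x)$, the $\theta$-stable module matching the skyscraper under the tilting equivalence.

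The step requiring the most care — and the one I would expect to be the main obstacle — is justifying that the natural isomorphism $\mathcal{F}_X\cong\mathcal{F}_A$ of Theorem \ref{BigTheorem3} is compatible with passage to families when one tracks the tautological object, so that the tautological sheaf transported from $\mathcal{F}_X$ really is the one computed above. Concretely this amounts to checking that $\mathbb{R}\Hom_X(T,-)$ and $T\otimes^{\mathbb{L}}_A(-)$ commute with derived base change along an arbitrary test scheme $S\to\Spec\mathbb{C}$, which should follow from the perfectness of $T$ together with the derived base-change results established earlier in the paper (Lemma \ref{DerivedBaseChange} and Corollary \ref{DerivedBaseChangeCor}). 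The remaining verifications — that $\mathcal{F}_A(\mathbb{C})$ is as described under the genericity and indivisibility hypotheses, and that the computed bundle has the expected fibres — are routine.
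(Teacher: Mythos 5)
Your proposal is correct and follows essentially the same route as the paper: the paper deduces this corollary by combining Theorem \ref{BigTheorem3} with Theorem \ref{Representability}, using genericity and indivisibility to identify $\mathcal{F}_A(\mathbb{C})$ with isomorphism classes of $\theta$-stable modules of dimension $d$, and identifies the tautological bundle as $T^{\vee}$ by transporting $\Delta_*\mathcal{O}_X$ across the natural isomorphism exactly as in your projection-formula computation. Your write-up actually supplies more detail than the paper's own remark on the tautological bundle, and the base-change compatibility you flag is indeed covered by Lemma \ref{Derived Base Change}.
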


\subsection{Applications}
To give an application of this theorem we need a class of varieties with tilting bundles and well-understood tilting equivalences. We consider the situation arising in following theorem of Van den Bergh.
\begin{TiltingBundles1dim}[{\cite[Theorem A]{3Dflops}}] \label{TiltingBundles1dim} Let $\pi:X \rightarrow \Spec(R)$ be a projective morphism of Noetherian schemes such that $\mathbb{R}\pi_* \mathcal{O}_{X} \cong \mathcal{O}_{R}$ and $\pi$ has fibres of dimension $\le 1$. Then there are tilting bundles $T_0$ and $T_1=T_0^{\vee}$ on $X$ such that the derived equivalences $\mathbb{R}\Hom_{X}(T_i,-) : D^b(\Coh \, X) \rightarrow D^b(A_i$-$\mod)$ restrict to equivalences of abelian categories between $^{-i}\Per(X/R)$ and $A_i$-$\mod$, where $A_i=\End_{X}(T_i)^{\op}$.
\end{TiltingBundles1dim}
This gives us a large class of varieties with well-understood tilting equivalences. We recall the definition of $^{-i}\Per(X/R)$ for $i=0,1$  in Definition \ref{Perverse}. We then show that in this situation there is a particular choice of dimension vector $d_{T_0}$ and stability condition $\theta_{T_0}$ such that $X$ occurs as the quiver GIT quotient of $A_0$. 

\begin{pBigTheorem1}[Corollary \ref{BigTheorem1}] Suppose we are in the situation of Theorem \ref{TiltingBundles1dim} and that $X$ and $\Spec(R)$ are both varieties. Then $X$ is the fine moduli space for the quiver representation moduli functor of $A_0=\End_{X}(T_0)^{\op}$ for dimension vector $d_{T_0}$ and stability condition $\theta_{T_0}$.
\end{pBigTheorem1}

See Section \ref{Dimension Vectors and Stability} for the definitions of $\theta_{T_0}$ and $d_{T_0}$. We note they are easy to define and depend only on a decomposition of $T$ into indecomposable summands.

\subsection{Applications to the Minimal Model Program}
The class of varieties in the above corollary includes flips and flops of dimension 3 in the minimal model program. In the setting of smooth, projective 3-folds flops were constructed as components of moduli spaces and shown to be derived equivalent in the work of Bridgeland \cite{FlopsDerivedCategories}, and this work was extended to include projective 3-folds with Gorenstein terminal singularities by Chen \cite{ChenFlops}. These results were reinterpreted more generally via tilting bundles by Van den Bergh \cite{3Dflops}. We can now reinterpret these results once again by combining Corollary \ref{BigTheorem1} with Van den Bergh's results.

It is immediate from Corollary \ref{BigTheorem1} that if $\pi: X\rightarrow \Spec(R)$ is either a flipping or flopping contraction with fibres of dimension $\le 1$ then both $X$ and its flip/flop can be reconstructed as fine moduli spaces with tilting tautological bundles. Further, in the case of flops, the following corollary shows that both $X$ and its flop can be constructed as quiver representation moduli spaces arising from tilting bundles on $X$.

\begin{pFlopGIT}[Corollary \ref{FlopGIT}] Suppose we are in the situation of Corollary \ref{BigTheorem1} and that $\pi:X \rightarrow \Spec(R)$ is a flopping contraction with flop $\pi': X'\rightarrow \Spec(R)$. Then $X$ is the quiver GIT quotient of the algebra $A_0=\End_{X}(T_0)^{\op}$ for dimension vector $d_{T_0}$ and stability condition $\theta_{T_0}$ with tautological bundle $T_0^{\vee}$, and the flop $X'$ is the quiver GIT quotient of the algebra $A_1=\End_{X}(T_1)^{\op}$ for dimension vector $d_{T_1}$ and stability condition $\theta_{T_1}$.
\end{pFlopGIT}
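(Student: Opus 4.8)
The first assertion is essentially a restatement of Corollary~\ref{BigTheorem1}. A flopping contraction with fibres of dimension $\le 1$ places $\pi$ in the situation of Theorem~\ref{TiltingBundles1dim}, hence of Corollary~\ref{BigTheorem1}, so $X$ is the fine moduli space for $\mathcal{F}_{A_0}$ with dimension vector $d_{T_0}$ and stability condition $\theta_{T_0}$; since $d_{T_0}$ is indivisible and $\theta_{T_0}$ generic this fine moduli space \emph{is} the quiver GIT quotient of $A_0$ for $d_{T_0}$ and $\theta_{T_0}$, and by Corollary~\ref{pMainTheoremCor} its tautological bundle is the dual $T_0^{\vee}=T_1$ of the tilting bundle.

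For the second assertion the plan is to apply the same machinery to the flop $X'$ and then re-express the answer in terms of the tilting data on $X$. First I would note that $\pi'\colon X'\to\Spec(R)$ is again a projective morphism of varieties with fibres of dimension $\le 1$ and $\mathbb{R}\pi'_*\mathcal{O}_{X'}\cong\mathcal{O}_R$ (standard for flopping contractions; see \cite{3Dflops}), so Theorem~\ref{TiltingBundles1dim} applies to $X'$ and produces tilting bundles $T_0^{X'}$, $T_1^{X'}=(T_0^{X'})^{\vee}$ and algebras $A_i^{X'}=\End_{X'}(T_i^{X'})^{\op}$ whose module categories are the perverse hearts ${}^{-i}\Per(X'/R)$. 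The key input, which I would isolate as Theorem~\ref{VanDenBerghFlops} following Van den Bergh's analysis of flops in \cite{3Dflops} (compatibly with \cite{FlopsDerivedCategories,ChenFlops}), is that the flop equivalence $D^b(\Coh X)\xrightarrow{\sim}D^b(\Coh X')$ restricts to an equivalence of abelian categories between ${}^{-1}\Per(X/R)$ and ${}^{0}\Per(X'/R)$ under which the progenerator $T_1$ corresponds to $T_0^{X'}$. Composing with the restricted tilting equivalences $\mathbb{R}\Hom_X(T_1,-)$ and $\mathbb{R}\Hom_{X'}(T_0^{X'},-)$ of Theorem~\ref{TiltingBundles1dim}, this yields an algebra isomorphism $A_1\cong A_0^{X'}$ which matches the decomposition of $T_1$ into indecomposable summands with that of $T_0^{X'}$.

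Granting this, I would finish by applying Corollary~\ref{BigTheorem1} to $X'$ equipped with its tilting bundle $T_0^{X'}$: this exhibits $X'$ as the fine moduli space, equivalently the quiver GIT quotient, of $A_0^{X'}$ for dimension vector $d_{T_0^{X'}}$ and stability condition $\theta_{T_0^{X'}}$. Since the data $d_{T_0^{X'}}$ and $\theta_{T_0^{X'}}$ depend only on the decomposition of $T_0^{X'}$ into indecomposable summands (Section~\ref{Dimension Vectors and Stability}), the isomorphism $A_0^{X'}\cong A_1$ carries them to $d_{T_1}$ and $\theta_{T_1}$, and the statement for $X'$ follows.

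I expect the main obstacle to be the identification $A_1\cong\End_{X'}(T_0^{X'})^{\op}$ together with the compatibility of summand decompositions, that is, checking that the flop equivalence is compatible with Van den Bergh's specific constructions of the tilting bundles (via perverse point sheaves) on $X$ and on $X'$, and hence with the resulting labelling of quiver vertices. Tracking which perverse heart is taken to which under the flop functor, and verifying that the induced equivalence of module categories takes indecomposable projectives bijectively to indecomposable projectives, is the delicate step; once it is in place the remaining assertions are formal consequences of Corollaries~\ref{BigTheorem1} and~\ref{pMainTheoremCor}.
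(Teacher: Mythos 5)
Your overall strategy is exactly the paper's: apply Corollary \ref{BigTheorem1} to $X$ with $T_0$ for the first assertion, apply it to $X'$ with its $^0$-perverse progenerator $T_0'$ for the second, and then use Van den Bergh's flop theorem (Theorem \ref{VanDenBerghFlops}) to identify $A_0'=\End_{X'}(T_0')^{\op}$ with $A_1$ and transport the dimension vector and stability condition. The first assertion and the reduction of the second to the identification of $(A_0',d_{T_0'},\theta_{T_0'})$ with $(A_1,d_{T_1},\theta_{T_1})$ are fine.

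The one genuine gap is in the final step, where you claim that because $d$ and $\theta$ ``depend only on the decomposition into indecomposable summands,'' the algebra isomorphism $A_0'\cong A_1$ automatically carries $d_{T_0'}$ to $d_{T_1}$ and $\theta_{T_0'}$ to $\theta_{T_1}$. That justification is not sufficient: $d_T(i)=\rk E_i$ and the weights of $\theta_T$ are defined from the \emph{ranks of vector bundles} and from \emph{which summand is $\mathcal{O}$}, and these are geometric data on two different varieties, not invariants visible from an abstract isomorphism of endomorphism algebras. Two things must actually be checked. First, that under the matching of summands $E_i\leftrightarrow E_i'$ the distinguished vertex $0$ (the $\mathcal{O}_X$ summand of $T_1$) corresponds to the $\mathcal{O}_{X'}$ summand of $T_0'$; the paper gets this from $\pi_*\mathcal{O}_X\cong\mathcal{O}_R\cong\pi'_*\mathcal{O}_{X'}$ together with \cite[Lemma 4.2.1]{3Dflops}. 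Second, that $\rk_X E_i=\rk_{X'}E_i'$; the paper's matching of summands is via $\pi_*E_i\cong\pi'_*E_i'$, and since $\pi$ and $\pi'$ are birational one has $\rk_X E_i=\rk_R\pi_*E_i=\rk_R\pi'_*E_i'=\rk_{X'}E_i'$. You correctly flag the compatibility of the flop equivalence with the summand decompositions as the delicate point, but you frame it entirely as a statement about indecomposable projectives and vertex labelling; the rank comparison across the flop, which is what actually pins down $d_{T_0'}=d_{T_1}$ and hence $\theta_{T_0'}=\theta_{T_1}$, is the missing ingredient. Once those two checks are inserted your argument coincides with the paper's proof.
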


This fits into a general philosophy of having a preferred stability condition defined by a tilting bundle and realising all minimal models via quiver GIT by changing the tilting bundle rather than changing the stability condition. 

\subsection{Applications to Resolutions of Rational Surface Singularities}
Minimal resolutions of  affine rational surface singularities automatically satisfy the conditions of Corollary \ref{BigTheorem1} hence provide another class of examples.
\begin{pRationalSurfaceSing}[Example \ref{RationalSurfaceSing}] Suppose that $X$ is a variety and that $\pi:X \rightarrow \Spec(R)$ is the minimal resolution of a rational surface singularity. Then there is a tilting bundle $T_0$ on $X$ such that $X$ is the fine moduli space of $A_0=\End_{X}(T_0)^{\op}$ for dimension vector $d_{T_0}$ and stability condition $\theta_{T_0}$ with tautological bundle $T_0^{\vee}$. 
\end{pRationalSurfaceSing}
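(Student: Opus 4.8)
The plan is to deduce this from Corollary \ref{BigTheorem1}, so the work lies entirely in checking that the minimal resolution $\pi:X \rightarrow \Spec(R)$ of a rational surface singularity is an instance of the set-up of Theorem \ref{TiltingBundles1dim}. First I would record that $\pi$ is a projective birational morphism of $2$-dimensional Noetherian schemes and that $\Spec(R)$ is a variety: $R$ is by hypothesis the coordinate ring of a normal affine surface, and $X$ is a variety by assumption, so the hypothesis of Corollary \ref{BigTheorem1} that $X$ and $\Spec(R)$ are both varieties is met. For the fibre-dimension bound, normality forces the singular locus of $\Spec(R)$ to have codimension $\ge 2$, hence to consist of finitely many isolated points, and $\pi$ restricts to an isomorphism over the regular locus; so every positive-dimensional fibre lies over one of these finitely many points, and such a fibre is a proper closed subscheme of the surface $X$ contracted to a point, hence of dimension $\le 1$ (concretely, the exceptional fibre is the reduced exceptional divisor, a connected configuration of rational curves).

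Next I would verify the cohomological condition $\mathbb{R}\pi_* \mathcal{O}_X \cong \mathcal{O}_R$. Since $\pi$ is birational and $R$ is normal we have $\pi_* \mathcal{O}_X = \mathcal{O}_R$, and the defining property of a rational singularity is precisely the vanishing $R^i \pi_* \mathcal{O}_X = 0$ for all $i > 0$; together these identify $\mathbb{R}\pi_* \mathcal{O}_X$ with $\mathcal{O}_R$ concentrated in degree $0$. (If one takes ``rational'' to mean only $R^1 \pi_* \mathcal{O}_X = 0$, the vanishing in higher degrees is automatic because the fibres are at most $1$-dimensional, so no further work is needed.)

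With the hypotheses of Theorem \ref{TiltingBundles1dim} in place, that theorem supplies a tilting bundle $T_0$ on $X$ with $T_1 = T_0^{\vee}$, and Corollary \ref{BigTheorem1} then applies directly to conclude that $X$ is the fine moduli space for the quiver representation moduli functor of $A_0 = \End_X(T_0)^{\op}$ with dimension vector $d_{T_0}$ and stability condition $\theta_{T_0}$; the identification of the tautological bundle with $T_0^{\vee}$ is the corresponding assertion in Corollary \ref{pMainTheoremCor}, on which Corollary \ref{BigTheorem1} rests. I do not anticipate a genuine obstacle: the only point demanding attention is that ``rational surface singularity'' be used in the sense that packages together normality and the vanishing of higher direct images of the structure sheaf, after which the proof is a routine verification of the hypotheses of Theorem \ref{TiltingBundles1dim}.
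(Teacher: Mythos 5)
Your proposal is correct and follows exactly the paper's route: the paper likewise observes that $\mathbb{R}\pi_*\mathcal{O}_X \cong \mathcal{O}_R$ holds by the definition of a rational singularity and that fibres of a resolution of a surface singularity have dimension $\le 1$, and then invokes Corollary \ref{BigTheorem1} directly (which already contains the identification of the tautological bundle with $T_0^{\vee}$). Your additional verifications (normality, isolated singular locus, birationality giving $\pi_*\mathcal{O}_X=\mathcal{O}_R$) simply spell out what the paper treats as immediate.
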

For quotient surface singularities this result was already known when either $G< \SL_2(\mathbb{C})$ \cite{ExceptionalFiberCrawleyBoevey}, or when $G$ was a cyclic or dihedral subgroup of $\GL_{2}(\mathbb{C})$ \cite{CrawSpecialMcKay, RCAA,RCAD1,RCAD2}, but is new in other cases. In particular, for quotient surface singularities the minimal resolution is known to have moduli space interpretation as $G$-$\Hilb(\mathbb{C}^2)$, see \cite{ItoNakamura, HilbertGL2Ishii}, but in general the tautological bundle is not tilting. This corollary extends a similar moduli space interpretation to minimal resolutions of all rational surface singularities such that the tautological bundle is tilting.

\subsection{Outline}

In Section \ref{Preliminaries} we recall a number of preliminary definitions and theorems relating to tilting bundles and quiver GIT which we will need in later sections.  Section \ref{Preliminary Lemmas} consists of a collection of preliminary lemmas which form the bulk of the proofs of our main results. We then prove our main results in Section \ref{Results}, and give an application to a class of examples motivated from the minimal model program, and also to resolutions of rational singularities, in Section \ref{Applications}. Appendix \ref{QuiverModuliForAlgebra} notes and corrects a small error in the results of \cite{SekiyaYamaura}.

\subsection{Acknowledgments}
The author is student at the University of Edinburgh, funded via an Engineering and Physical Sciences Research Council doctoral training grant [grant number EP/J500410/1], and this material will form part of his PhD thesis. The author would like to express his thanks to his supervisors, Dr.\ Michael Wemyss and Prof.\ Iain Gordon, for much guidance and patience, and also to the EPSRC. He would also like to thank an anonymous referee who pointed out an error in an earlier version of the paper, the correction of which led to the discussion in Appendix \ref{QuiverModuliForAlgebra}.

\section{Preliminaries} \label{Preliminaries}

In this section we recall a number of definitions and theorems we will use later, in particular relating to tilting bundles and Quiver GIT. 

\subsection{Geometric and Notational Preliminaries}
We begin by giving some geometric and notational preliminaries. Throughout this paper all schemes will be over $\mathbb{C}$ and a variety will be a scheme which is separated, reduced, irreducible and of finite type over $\mathbb{C}$. In the introduction we stated our results for varieties projective over an affine base, but in fact we will prove our results in the generality of schemes, $X$, arising from projective morphisms $\pi:X \rightarrow \Spec(R)$ of finite type schemes over $\mathbb{C}$.  Such schemes are quasi-projective over $\mathbb{C}$, and hence separated, so are a slight generalisation of varieties projective over an affine base in that they may not be reduced or irreducible. For an affine scheme $\Spec(R)$ we will let $\mathcal{O}_R$ denote $\mathcal{O}_{\Spec(R)}$. We denote the category of coherent sheaves on a scheme $X$ by $\Coh \, X$, we denote the skyscraper sheaf of a closed point $x \in X$ by $\mathcal{O}_x$, and  for a locally free sheaf $\mathcal{F} \in \Coh\, X$ we let $\mathcal{F}^{\vee}$ denote the dual $\CalHom_{X}(\mathcal{F},\mathcal{O}_{X})$. For an algebra $A$ we let $A^{\op}$ denote the opposite algebra of $A$, and $A$-$\mod$ denote the category of finitely generated left $A$-modules.

\subsection{Derived Categories and Tilting} \label{Derived Categories and Tilting}
We recall the definitions of tilting bundles on schemes and several notions related to derived categories that we will make use of later.

Consider a triangulated $\mathbb{C}$-linear category $\mathfrak{C}$ with small direct sums. A subcategory is \emph{localising} if it is triangulated and also closed under all small direct sums. A localising subcategory is necessarily closed under direct summands \cite[Proposition 1.6.8]{Neeman Triangulated Categories}.  An object $T \in \mathfrak{C}$ \emph{generates} if the smallest localising category containing $T$ is $\mathfrak{C}$. 

\begin{TiltingDef} \label{TiltingDef}
Let $\mathfrak{C}$ be a triangulated category closed under small direct sums. An object $T$ in $\mathfrak{C}$ is \emph{tilting} if:
\begin{itemize}
\item[i)] $\Ext^k_{\mathfrak{C}}(T,T)=0 $ for $k \neq 0$.
\item[ii)] $T$ generates $\mathfrak{C}$.
\item[iii)] The functor $\Hom_{\mathfrak{C}}(T,-)$ commutes with small direct sums.
\end{itemize}
\end{TiltingDef}

For $X$ a quasi-projective scheme let $D(X)$ denote the derived category of quasicoherent sheaves on $X$, and $D^b(X)$ denote the bounded derived category of coherent sheaves. For $X$ a Noetherian quasi-projective scheme $D(X)$ is closed under small direct sums \cite[Example 1.3]{Neeman}, and $D(X)$ is compactly generated with compact objects the perfect complexes \cite[Proposition 2.5]{Neeman}. We let $\Perf(X)$ denote the category of perfect complexes on $X$. When $X$ is smooth the category of perfect complexes equals $D^b(X)$.

For an algebra $A$ we let $D(A)$ be the derived category of left modules over $A$, and $D^b(A)$ the bounded derived category of finitely generated left $A$-modules. When $D(X)$ has tilting object a sheaf, $T$,  then define $A:=\End_{X}(T)^{\op}$. When $T$ is a locally free coherent sheaf on $X$ then $T$ is a \emph{tilting bundle} and this gives a derived equivalence between $D(X)$ and $D(A)$.  

\begin{Tilting}[{\cite[Theorem 7.6]{HilleVdB}, \cite[Remark 1.9]{BHTilting}}] \label{BHTilting}
Let $X$ be a scheme that is projective over an affine scheme of finite type,  $\pi:X \rightarrow \Spec(R)$, with tilting bundle $T$ on $X$ and define $A=\End_{X}(T)^{\op}$. Then:
\begin{itemize}
\item[i)] The functor $T_*:= \mathbb{R}\Hom_{X}(T,-)$ is an equivalence between $D(X)$ and $D(A)$. An inverse equivalence is given by the left adjoint $T^*=T\otimes^{\mathbb{L}}_{A}(-)$.
\item[ii)] The functors $T_*,T^*$ remain equivalences when restricted to the bounded derived categories of finitely generated modules and coherent sheaves.
\item[iii)] If $X$ is smooth then $A$ has finite global dimension.
\end{itemize}
Moreover the equivalence $T_*$ is $R$-linear, and $A$ is a finite $R$-algebra. 
\end{Tilting}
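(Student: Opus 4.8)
The plan is to obtain everything from the formalism of compact generators in a compactly generated triangulated category, together with the projectivity of $\pi$. For (i), note first that $T$, being a locally free coherent sheaf, is a perfect complex, hence a compact object of $D(X)$, and by hypothesis it generates, so $T$ is a compact generator. The functor $T_*=\mathbb{R}\Hom_X(T,-)$ lands in $D(A)$ since $\Hom_X(T,-)$ carries a canonical right $\End_X(T)$-action, i.e.\ a left $A$-action, and the tilting axioms give $T_*(T)=\mathbb{R}\Hom_X(T,T)=\End_X(T)$, concentrated in degree $0$ and isomorphic to $A$ as a left $A$-module. Its left adjoint is $T^*=T\otimes^{\mathbb{L}}_A(-)$; write $\eta\colon\id\to T_*T^*$ and $\epsilon\colon T^*T_*\to\id$ for the unit and counit. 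Both $T_*$ (because $T$ is compact) and $T^*$ (being a left adjoint) preserve small coproducts, so $\{M\in D(A):\eta_M\text{ is an isomorphism}\}$ and $\{\mathcal{F}\in D(X):\epsilon_{\mathcal{F}}\text{ is an isomorphism}\}$ are localising subcategories. A direct computation identifies $\eta_A$ and $\epsilon_T$ with the canonical isomorphisms $A\xrightarrow{\ \sim\ }T_*(T)$ and $T^*(A)\xrightarrow{\ \sim\ }T$, and since $A$ generates $D(A)$ and $T$ generates $D(X)$ it follows that $\eta$ and $\epsilon$ are isomorphisms throughout, i.e.\ that $T_*\dashv T^*$ is an adjoint equivalence. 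For the ``moreover'' clause, $R$-linearity of $T_*$ is built into its construction, and $A\cong\Gamma(X,\CalHom_X(T,T))^{\op}$ is a finite $R$-module because $\pi$ is projective and $\CalHom_X(T,T)$ is coherent; in particular $A$ is Noetherian.

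For (ii): if $\mathcal{F}\in D^b(\Coh\,X)$ then, reducing by dévissage to the case of a single sheaf, the cohomology of $T_*\mathcal{F}$ is assembled from the modules $\Ext^i_X(T,\mathcal{F})=H^i(X,T^{\vee}\otimes\mathcal{F})$ (here $T$ locally free gives $\RCalHom_X(T,-)=T^{\vee}\otimes(-)$), each of which is a finitely generated $R$-module by projectivity of $\pi$ and vanishes outside $0\le i\le\dim X$; hence $T_*$ maps $D^b(\Coh\,X)$ into $D^b(A$-$\mod)$. More precisely, Grothendieck vanishing shows $T_*$ sends $D^{\le 0}(X)$ into $D^{\le\dim X}(A)$, and a short orthogonality argument — using $D^{\ge 0}(A)=(D^{\le -1}(A))^{\perp}$ and the fact that the equivalence $T_*$ commutes with forming right orthogonals — converts this into the statement that $T^*$ sends $D^{\ge 0}(A)$ into $D^{\ge-\dim X}(X)$. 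On the other hand, for $M\in D^b(A$-$\mod)$ a resolution by finitely generated free $A$-modules ($A$ being Noetherian) shows $T^*M\simeq T\otimes_A P^{\bullet}$ is bounded above with coherent cohomology. Combining the two bounds, $T^*M\in D^b(\Coh\,X)$, so $T^*$ carries $D^b(A$-$\mod)$ into $D^b(\Coh\,X)$; together with the first inclusion this gives that $T_*$ and $T^*$ restrict to mutually inverse equivalences $D^b(\Coh\,X)\simeq D^b(A$-$\mod)$.

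For (iii): when $X$ is smooth, $D^b(\Coh\,X)=\Perf(X)$ is the subcategory of compact objects of $D(X)$, and an equivalence preserves compacts, so $T_*$ identifies it with $\Perf(A)$; comparing with (ii) yields $D^b(A$-$\mod)=\Perf(A)$, i.e.\ every finitely generated $A$-module has finite projective dimension, and since $A$ is module-finite over the affine (hence finite-dimensional Noetherian) $\mathbb{C}$-algebra $R$ this forces $\gldim A<\infty$. The only step that is not pure formalism is the second inclusion in (ii) — equivalently, the finiteness of the Tor-dimension of $T$ as a right $A$-module — and that bookkeeping with the two amplitude bounds on $T_*$ is where I would expect to have to be careful; everything else is the standard compact-generator package combined with properness of $\pi$.
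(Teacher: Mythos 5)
The paper does not actually prove this statement --- it is quoted from \cite[Theorem 7.6]{HilleVdB} and \cite[Remark 1.9]{BHTilting} --- so your argument is really being compared with the standard proofs in those references. Parts (i) and (ii) and the ``moreover'' clause are correct and follow essentially that standard route: the compact-generator/localising-subcategory argument for (i) (note that the two hypotheses you need, $T_*$ preserving coproducts and $T$ generating, are literally conditions (iii) and (ii) of the paper's Definition of a tilting object), properness of $\pi$ plus Grothendieck vanishing for one inclusion in (ii), and the orthogonality trick combined with finite free resolutions over the Noetherian ring $A$ for the other. Your bookkeeping with the two amplitude bounds is the right way to get boundedness of $T^*M$, and it checks out.

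The one genuine gap is the final inference in (iii). From $D^b(A\textnormal{-}\mod)=\Perf(A)$ you correctly conclude that every finitely generated $A$-module has finite projective dimension, but the step ``and since $A$ is module-finite over $R$ this forces $\gldim A<\infty$'' is not something you can simply assert: finite global dimension requires a \emph{uniform} bound on projective dimensions, and ``every finitely generated module is perfect'' does not supply one for a general Noetherian ring. (For Artinian algebras it does, via $\gldim A=\operatorname{pd}(A/\operatorname{rad}A)$ and the finitely many simples, but $A$ here is typically not Artinian, and reducing to the Artinian case through localisations of $R$ runs into finitistic-dimension questions that need an actual argument.) The standard repair uses exactly the amplitude bounds you already established in (ii): for any finitely generated $A$-module $M$, the complex $T^*M$ lies in $D^{[-n,0]}(\Coh\, X)$ with $n=\dim X$, and on a smooth variety one has $\Hom_{D(X)}(\E,\mathcal{F}[i])=0$ for all $i$ greater than a constant depending only on $n$ and the amplitudes of $\E$ and $\mathcal{F}$ (finite locally free resolutions together with the local-to-global Ext spectral sequence and Grothendieck vanishing). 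Transporting back along the equivalence gives $\Ext^i_A(M,N)=0$ for all finitely generated $M,N$ and all $i$ beyond a single constant, which is precisely $\gldim A<\infty$. With that substitution your proof is complete.
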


\subsection{Quivers and Quiver GIT}  \label{Quiver GIT Definitions}
We set our notation for quivers and then recall the definitions required for quiver geometric invariant theory, following the definitions of King \cite{KingQGIT}.

A \emph{quiver} is a directed multigraph. We will denote a quiver $Q$ by $Q=(Q_0,Q_1)$, with $Q_0$ the set of vertices and $Q_1$ the set of arrows. The set of arrows is equipped with head and tail maps $h,t:Q_1 \rightarrow Q_0$ which take an arrow to the vertices that are its head and tail respectively. We compose arrows from right to left, that is
\begin{equation*}
b.a = \left\{ \begin{array}{c c} ba & \text{ if } h(a)=t(b); \\
0 & \text{ otherwise;} \end{array} \right.
\end{equation*}
and we extend this definition to paths. We recall that there is a trivial path $e_i$ for each vertex $i \in Q_0$ and that these form a set of orthogonal idempotents.

We denote the path algebra by $\mathbb{C}Q$, define $S$ to be the subalgebra of $\mathbb{C}Q$ generated by the trivial paths, and define $V$ to be the $\mathbb{C}$-vector subspace of $\mathbb{C} Q$ spanned by the arrows $a \in Q_1$. Then $S$ is a semisimple $\mathbb{C}$-algebra, $V$ is an $S^e:=S \otimes_{\mathbb{C}} S^{\op}$-module, and $\mathbb{C}Q=T_S(V):= \bigoplus_{i \ge 0 } V^{\otimes_S i}$. Given $\Lambda$ an $S^e$-module we define $I(\Lambda)$ to be the two sided ideal in $\mathbb{C}Q$ generated by $\Lambda$. We then define 
\begin{equation*}
\frac{\mathbb{C} Q}{\Lambda}:=\frac{\mathbb{C} Q}{I(\Lambda)}
\end{equation*}
and refer to it as the path algebra with \emph{relations} $\Lambda$. 

We can now recall the definitions required for quiver GIT.

\begin{QGIT1}
Let $Q=(Q_1,Q_0)$ be a quiver.
 
\begin{enumerate}
\item[i)]
A \emph{dimension vector} for $Q$ is defined to be an element $d \in \mathbb{N}^{Q_0}$ assigning a non-negative integer to each vertex. 

\item[ii)]
A \emph{dimension $d$ representation} of $Q$ is given by assigning to each vertex $i$ the vector space $V_i=\mathbb{C}^{d(i)}$, to each arrow $a$ a linear map $\phi_{a}: V_{t(a)} \rightarrow V_{h(a)}$, and to each trivial path $e_i$ the linear map $id_{V_{i}}$.

\item[iii)]
A \emph{morphism}, $\psi:(V_i,\rho_a) \rightarrow (W_i, \chi_a)$, between two finite dimensional representations is given by a linear map $\psi_i: V_i \rightarrow W_i$ for each vertex $i$ such that for every arrow $a$ we have $ \chi_{a} \circ \psi_{t(a)} = \psi_{h(a)} \circ \rho_a$.

\item[iv)]
The \emph{representation variety}, $\Rep_d(Q)$, is defined to be the set of all representations of $Q$ of dimension $d$, and we note that this is an affine variety.
\end{enumerate} \noindent
We then suppose that the quiver has relations $\Lambda$ defining the algebra $A=\mathbb{C}Q/\Lambda$. 
\begin{enumerate}
\item[v)]
A \emph{representation of the quiver with relations}, $(Q,\Lambda)$, is a representation of $Q$ such that the linear maps assigned to the arrows satisfy the relations among the paths in the quiver. We recall that a representation of a quiver with relations corresponds to a left $\mathbb{C}Q/\Lambda$-module.

\item[vi)]
The \emph{representation scheme} $\Rep_d(Q,\Lambda)$ is the closed subscheme of the affine variety $\Rep_d(Q)$ cut out by the ideal corresponding to the relations $\Lambda$. Closed points of $\Rep_d(Q,\Lambda)$ correspond to representations of  $(Q,\Lambda)$.
\end{enumerate}
\end{QGIT1}

An action of a reductive group on the affine scheme $\Rep_d(Q,\Lambda)$ can now be defined. For $\{\phi_a : a \in Q_1 \}$, a dimension $d$ representation, there is an action of $\GL_{d(i)}(\mathbb{C})$ at vertex $i$ by base change;
\begin{alignat*}{2}
&g.\phi_a =\left\{ \begin{array}{c } g \circ \phi_a   \\
 \phi_a \circ g^{-1}  \\
                                          0  \end{array} \right.
&\begin{array}{c} \text{ if } t(a)=i; \\
 \text{ if } h(a)=i;  \\
\text{ otherwise. } \end{array}
\end{alignat*}
Then  $G:=\GL_d(\mathbb{C}):=\prod_{i \in Q_0} \GL_{d(i)}(\mathbb{C})$ acts on $\Rep_d(Q,\Lambda)$ with kernel $\mathbb{C}^*=\Delta$. We note that orbits of $G$ correspond to isomorphism classes of representations.

\begin{QGIT2}
The \emph{affine quotient} with dimension vector $d$ is defined to be 
\begin{equation*}
\Rep_d(Q,\Lambda) / \kern-0.3em  / G := \Spec(\mathbb{C}[\Rep_d(Q,\Lambda)]^G).
\end{equation*}
\end{QGIT2}

We now recall the definition of stability conditions in order to consider more general GIT quotients of $\Rep_d(Q,\Lambda)$.

\begin{QGIT3}  \hfil
\begin{enumerate} 
\item[i)]
For a dimension vector $d$ a \emph{stability condition} is defined to be a $\theta \in \mathbb{Z}^{Q_0}$ assigning an integer to each vertex of Q such that $\sum_{i \in Q_0} d(i)\theta(i)=0$. For a finite dimensional representation $M$ let $d_M$ be the dimension vector of $M$, and define $\theta(M)= \sum_{i \in Q_0} \theta(i) d_M(i)$. 

\item[ii)]
A finite dimensional representation $M$ is \emph{$\theta$-semistable} if any subrepresentation $N \subset M$ satisfies $\theta(N) \ge \theta(M)$.

\item[iii)]
A $\theta$-semistable representation $M$ is \emph{$\theta$-stable} if there are no non-zero proper subrepresentations $N \subset M$ with $\theta(N)=\theta(M)$.  A stability $\theta$ is \emph{generic} if all $\theta$-semistable representations are stable.

\item[iv)]
For a stability condition $\theta$ define $\Rep_d(Q,\Lambda)^{s}_{\theta}$ to be the set of $\theta$-stable representations, and $\Rep_d(Q,\Lambda)^{ss}_{\theta}$ to be the set of $\theta$-semistable representations. 
\end{enumerate}
\end{QGIT3}

\begin{StableLemma} \label{StableLemma}
Let $d$ be a dimension vector and $\theta$ a stability condition on some quiver with relations.  If $M$ and $N$ are dimension $d$ $\theta$-stable representations then:
\begin{itemize}
\item[i)] If $0 \rightarrow M' \rightarrow M \rightarrow M'' \rightarrow 0$ is a short exact sequence with $M'$ a non-zero and proper submodule of $M$ then $\theta(M')>\theta(M)=0>\theta(M'')$.
\item[ii)] Any non-zero morphism of representations $f:M \rightarrow N$  is an isomorphism.
\item[iii)] Any morphism of representations $f: M \rightarrow M$ is a multiple of the identity.
\end{itemize}
\end{StableLemma}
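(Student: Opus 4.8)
The plan is to prove all three statements together, since (ii) and (iii) follow from (i) by standard arguments. The key observation is that for a $\theta$-stable representation $M$ with dimension vector $d$, we always have $\theta(M) = \sum_{i} \theta(i) d(i) = 0$ by the definition of a stability condition (part (i) of Definition \ref{QGIT3}).

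\smallskip

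\emph{Proof of (i).} Let $0 \rightarrow M' \rightarrow M \rightarrow M'' \rightarrow 0$ be a short exact sequence of representations with $M'$ a non-zero proper submodule of $M$. First I would note that $\theta(M) = \theta(M') + \theta(M'')$ since dimension vectors add in short exact sequences. Since $M$ is $\theta$-stable it is in particular $\theta$-semistable, so $\theta(M') \geq \theta(M) = 0$. Moreover $M'$ is non-zero and proper, so by the definition of $\theta$-stability (part (iii) of Definition \ref{QGIT3}) we cannot have $\theta(M') = \theta(M)$; hence $\theta(M') > \theta(M) = 0$. Then $\theta(M'') = \theta(M) - \theta(M') = -\theta(M') < 0$, giving the chain $\theta(M') > \theta(M) = 0 > \theta(M'')$.

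\smallskip

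\emph{Proof of (ii).} Let $f: M \rightarrow N$ be a non-zero morphism of representations, where both $M$ and $N$ are $\theta$-stable of dimension vector $d$. Consider $K = \ker f \subseteq M$ and $I = \image f \subseteq N$, so that there is a short exact sequence $0 \rightarrow K \rightarrow M \rightarrow I \rightarrow 0$ and $I$ is a submodule of $N$. Since $f \neq 0$ we have $I \neq 0$, so $I$ is a non-zero submodule of the $\theta$-stable representation $N$; hence either $I = N$ or $\theta(I) > \theta(N) = 0$ by $\theta$-semistability together with $\theta$-stability as in (i). On the other hand, $I$ is a quotient of the $\theta$-stable representation $M$, so applying (i) to the sequence $0 \rightarrow K \rightarrow M \rightarrow I \rightarrow 0$: if $K \neq 0$ is proper then $\theta(I) < 0$, contradicting $\theta(I) \geq 0$; if $K = M$ then $f = 0$, excluded; so $K = 0$, i.e. $f$ is injective and $\theta(I) = \theta(M) = 0$. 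But then $\theta(I) = 0 = \theta(N)$ with $I$ a submodule of the $\theta$-stable $N$ forces $I = N$, so $f$ is also surjective, hence an isomorphism.

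\smallskip

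\emph{Proof of (iii).} Let $f: M \rightarrow M$ be a morphism of representations with $M$ $\theta$-stable of dimension $d$. Since $M$ is a finite-dimensional representation over the algebraically closed field $\mathbb{C}$, the endomorphism $f$ has an eigenvalue $\lambda \in \mathbb{C}$. Then $f - \lambda \cdot \id_M$ is an endomorphism of $M$ which is not an isomorphism (its kernel is the non-zero $\lambda$-eigenspace). By part (ii), applied with $N = M$, every non-zero endomorphism of $M$ is an isomorphism, so $f - \lambda \cdot \id_M = 0$, that is, $f = \lambda \cdot \id_M$.

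\smallskip

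The main point to get right is the bookkeeping of inequalities in (ii): one must carefully distinguish the roles of $I$ as a submodule of $N$ (giving $\theta(I) \geq 0$) and as a quotient of $M$ (giving $\theta(I) \leq 0$, with equality forcing $K = 0$), and then use the stability of $N$ once more to upgrade injectivity to an isomorphism. There is no genuine obstacle here — this is the standard Schur-type argument for stable objects in an abelian category with a slope/stability function — but the argument does rely essentially on all representations having the \emph{same} dimension vector $d$ so that $\theta(M) = \theta(N) = 0$.
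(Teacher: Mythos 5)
Your proof is correct and follows essentially the same argument as the paper: additivity of $\theta$ plus the stability definition for (i), the kernel/image analysis for (ii), and the eigenvalue argument for (iii). The only cosmetic difference is that you conclude surjectivity in (ii) by invoking stability of $N$ once more ($\theta(I)=0$ forces $I=N$), whereas the paper simply counts dimensions; both are fine.
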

\begin{proof}
Firstly, if $0 \rightarrow M' \rightarrow M \rightarrow M' \rightarrow 0$ is a short exact sequence and $M'$ is non-zero and proper submodule of $M$ then by the definition of stability $\theta(M')>0=\theta(M)$ and hence $\theta(M'')<0$.

Secondly, suppose $f:M \rightarrow N$ is non-zero and so the kernel is a proper submodule of $M$. If the kernel is trivial than $f$ is an injection and hence an isomorphism as $M$ and $N$ are finite dimensional with the same dimension vector. If the kernel is non-trivial then $\theta(\Image f)<0$ by part i). However, as $\Image f$ is a subrepresentation of $N$ this is a contradiction to the stability of $N$, hence the kernel is trivial and $f$ is an isomorphism.

Finally, if $f:M \rightarrow M$ is a morphism of representations then $f$ defines a morphism of vector spaces $\mathbb{C}^d \rightarrow \mathbb{C}^d$. In particular this map has an eigenvalue $\lambda$ and defines the map of representations $M \xrightarrow{f - \lambda \cdot \id} M$ which is not a surjection. As such it is not an isomorphism and so by part ii) $f=\lambda\cdot  \id$.
\end{proof}

\begin{QGIT4}
Every finite dimensional $\theta$-semistable representation $M$ has a Jordan-Holder filtration
\begin{equation*}
0=M_0 \subset M_1 \subset \dots \subset M_n=M
\end{equation*}
such that each $M_i$ is $\theta$-semistable and each quotient is $\theta$-stable. Two $\theta$-semistable representations are defined to be \emph{S-equivalent} if their Jordan-Holder filtrations have matching composition factors.
\end{QGIT4}
We note that $\theta$-stable objects have length one filtrations hence are $S$-equivalent if and only if they are isomorphic.

Any character of $G$ is given by powers of the determinant character and is of the form
\begin{equation*}
\chi_\theta(g):= \prod_{i \in Q_0} \det(g_i)^ {\theta_i}
\end{equation*}
for some collection of integers $\theta_i$. For a given dimension vector $d$ we will restrict our attention to characters which are trivial on the kernel of the action, $\Delta$, which translates to the condition $\sum \theta(i) d(i)=0$. Hence these characters are in correspondence with stabilities.

We recall that $\Rep_d(Q,\Lambda)$ is affine, and that $f \in \mathbb{C}[\Rep_d(Q,\Lambda)]$ is a \emph{semi-invariant of weight $\chi$} if $f(g.x)=\chi(g)f(x)$ for all $g \in G$ and all $x \in \Rep_d(Q,\Lambda)$. We denote the set of such $f$ as $\mathbb{C}[\Rep_d(Q,\Lambda)]^{G,\chi}$. 

\begin{QGIT7}[\cite{KingQGIT}] \label{QGIT7} The \emph{quiver GIT quotient}, for dimension vector $d$ and stability condition $\theta$, is defined to be the scheme
\begin{equation*}
\mathcal{M}^{ss}_{d,\theta}  := \Proj \left( \bigoplus_{n \ge 0} \mathbb{C}[\Rep_d(Q,\Lambda)]^{G,\chi_{\theta}^n} \right).
\end{equation*}

\end{QGIT7}

It is immediate from this definition that for any stability condition $\theta$ the quiver GIT quotient $\mathcal{M}^{ss}_{d,\theta}$  is projective over the affine quotient $\mathcal{M}^{ss}_{d,0}=\Spec(\mathbb{C}[\Rep_d(Q,\Lambda)]^G)$. 

\subsection{Quivers and Tilting Bundles} \label{Quivers and Tilting bundles} We recall the construction of a quiver with relations from a tilting bundle.

Let $X \rightarrow \Spec(R)$ be a projective morphism of finite type schemes over $\mathbb{C}$. Given a tilting bundle $T'$ on $X$ and a decomposition into  indecomposable summands $T'= \bigoplus_{i=0}^n E_i^{\oplus \alpha_i}$, with $E_i$ and $E_j$ non-isomorphic for $i \neq j$, then $T=\bigoplus_{i=0}^n E_i$ is also a tilting bundle on $X$ and $\End_{X}(T')^{\op}$ is Morita equivalent to $\End_{X}(T)^{\op}$. Hence we will always assume, without loss of generality, that our tilting bundles have a given multiplicity free decomposition into indecomposables, $T=\bigoplus_{i=0}^n E_i$.

We then recall from Theorem \ref{BHTilting} that $A=\End_{X}(T)^{\op}$ is a finite $R$-algebra for $R$ a finite type commutative $\mathbb{C}$-algebra, and we wish to present $A$ as the path algebra of a quiver with relations such that each indecomposable $E_i$ corresponds to the unique idempotent $e_i = id_{E_i}\in \Hom_{X}(E_i,E_i) \subset A=\End_{X}(T)^{\op}$ that is the trivial path at vertex $i$. In particular $1=\sum e_i$ and we have a diagonal inclusion $\bigoplus_{i=0}^n e_i R \subset A$. 

Indeed, we can construct a quiver by creating a vertex $i$  corresponding to each idempotent $e_i$. We then choose a finite set of generators of $e_i A e_j$ as an $R$-module, which is possible as $A$ is finite $R$-module, and create corresponding arrows from vertex $j$ to $i$ for all $0 \le i,j \le n$. We then consider a presentation of $R$ over $\mathbb{C}$ with finitely many generators, possible as it has finite type, and at each vertex add arrows corresponding to each generator of $R$. If we call this quiver $Q$ then by this construction there is a surjection of $R$-algebras $\mathbb{C}Q \rightarrow A$ given by mapping each trivial path to the corresponding idempotent, and each arrow to the corresponding generator. We then take the kernel of this map, $I$, and $\mathbb{C}Q/I \cong A$ as an $R$-algebra. 

We note that this presentation has many unpleasant properties, for example it may be the case that the ideal of relations $I$ is not a subset of the paths of length greater than 1.  In nice situations it is possible to simplify the presentation, see for example the situation considered in \cite[Section 1]{BergmanProudfootGIT}. 

We also note that there is a decomposition of $A$ consider as a left $A$-module into projective modules $A=\bigoplus_{i=0}^n \Hom_{X}(T,E_i)$ where the module $\Hom_{X}(T,E_i)$ corresponds to paths in the quiver starting at vertex $i$.

\subsection{Functor of Points and Moduli Spaces} We recall the definition of the functor of points and the definition of a fine moduli space. Let $\mathfrak{Sch}$ denote the category of finite type schemes over $\mathbb{C}$, let $\mathfrak{Sets}$ denote the category of sets, and let $\mathfrak{R}$ denote the category of finite type commutative $\mathbb{C}$-algebras. Suppose $X \in \mathfrak{Sch}$, then the functor of points for $X$ is defined to be the functor 
\begin{align*}
\Hom_{\mathfrak{Sch}}(-,X): \mathfrak{R} &\rightarrow \mathfrak{Sets} \\
S &\mapsto \Hom_{\mathfrak{Sch}}(\Spec(S),X)
\end{align*}
and by Yoneda's lemma this gives an embedding of $\mathfrak{Sch}$ into the category of functors from $\mathfrak{R}$ to $\mathfrak{Sets}$.  

A functor $\mathcal{F}: \mathfrak{R} \rightarrow \mathfrak{Sets}$ is \emph{representable} if there is some $Y \in \mathfrak{Sch}$ with a natural isomorphism $\nu:\mathcal{F} \rightarrow \Hom_{\mathfrak{Sch}}(-,Y)$. Then $Y$ is said to be a \emph{fine moduli space} for $\mathcal{F}$. 

 A functor $\mathcal{F}$ is said to be  \emph{corepresentable} if there is a natural transformation  $\nu:\mathcal{F} \rightarrow \Hom_{\mathfrak{Sch}}(-,Y)$ such that for any scheme $Y'$ with a natural transformation $\nu':\mathcal{F} \rightarrow \Hom_{\mathfrak{Sch}}(-,Y')$ there is a unique morphism $Y \rightarrow Y'$ factoring $\nu'$ through $\nu$.

Moduli functors and the functor of points could be defined in terms of functors $\mathfrak{Sch}^{\op} \rightarrow \mathfrak{Sets}$ rather than functors $\mathfrak{R} \rightarrow \mathfrak{Sets}$, however as schemes are defined by local affine structure there is a one to one correspondence between contravariant functors from $\mathfrak{Sch}$ to $\mathfrak{Sets}$ and covariant functors from $\mathfrak{R}$ to $\mathfrak{Sets}$ so either viewpoint is equivalent. We choose the one above to automatically simplify later arguments and definitions to considering affine cases. One advantage of the  alternative description is that it is clear to see that if $X$ is a fine moduli space for a moduli functor $\mathcal{F}$ then there is a \emph{tautological element} in $\mathcal{F}(X)$ corresponding to $id \in \Hom_{\mathfrak{Sch}}(X,X)$ under the natural isomorphism.

\subsection{Quiver Representation Moduli Functors} \label{Quiver GIT moduli functors} We recall the definition of a moduli functor for (semi)stable quiver representations. Let $A$ be a $\mathbb{C}$-algebra of finite type. Suppose that $A$ is presented as a quiver with relations and for $B \in \mathfrak{R}$ define $A^B:=A \otimes_{\mathbb{C}} B$. We recall that left $A$-modules correspond to quiver representations. For a dimension vector $d$, stability condition $\theta$, and $B \in \mathfrak{R}$ define the set
\[
    \mathcal{S}_{A,d,\theta}^{(s)s}(B)  := \left. \left\{ M \in  A^B\text{-}\mod \, \begin{array}{| l } \bullet  \text{ $M$ is a finitely generated and flat $B$-module.} \\ \bullet \text{ The $A$-module $  B/\mathfrak{m} \otimes_B M$ has dimension vector $d$} \\
  \text{and is $\theta$-(semi)stable for all maximal ideals $\mathfrak{m}$  of $B$.    } \end{array}  \right\} \right.
\]
and define the \emph{quiver representation moduli functor} to be
\begin{align*}
 \mathcal{F}_{A,d,\theta}^{(s)s}: & \, \mathfrak{R} \rightarrow \mathfrak{Sets} \\
    & B  \mapsto  \left. {\mathcal{S}_{A,d,\theta}^{(s)s}(B) }\middle/ {\sim } \right.
\end{align*} 
where the equivalence $\sim$ is defined by two modules being equivalent if they are isomorphic after tensoring by an invertible $B$-module:  $M \sim N$ if there is a locally free rank one $B$-module $L$ such that $M \otimes_B L \cong N$ as $A^B$ modules. We note that two stable modules are equivalent if and only if they are locally isomorphic.

\begin{EquivalencesLemma}If $M, N \in \mathcal{S}^{s}_{A,d,\theta}(B)$ then $M \sim N$ if and only if $M \otimes_B B_\mathfrak{m} \cong N \otimes_B B_\mathfrak{m}$ for all $\mathfrak{m} \in \MaxSpec(B)$.
\end{EquivalencesLemma}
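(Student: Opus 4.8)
The plan is to prove both directions, with the forward direction ($M \sim N \implies$ local isomorphism at every $\mathfrak{m}$) being essentially formal and the converse ($M \otimes_B B_\mathfrak{m} \cong N \otimes_B B_\mathfrak{m}$ for all $\mathfrak{m} \implies M \sim N$) being the substantive claim. For the forward direction, if $M \otimes_B L \cong N$ as $A^B$-modules for an invertible $B$-module $L$, then localising at any $\mathfrak{m}$ gives $M \otimes_B B_\mathfrak{m} \otimes_{B_\mathfrak{m}} L_\mathfrak{m} \cong N \otimes_B B_\mathfrak{m}$; since $L$ is locally free of rank one, $L_\mathfrak{m} \cong B_\mathfrak{m}$, so $M \otimes_B B_\mathfrak{m} \cong N \otimes_B B_\mathfrak{m}$. (This is the remark already recorded in the text, "two stable modules are equivalent if and only if they are locally isomorphic," so arguably only the converse genuinely needs writing out.)

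For the converse, the strategy is to build the invertible module $L$ by gluing local isomorphisms. First I would observe that $\Hom_{A^B}(M, N)$ is a finitely generated $B$-module: $M$ is finitely generated over $A^B$ hence, since $N$ is $B$-flat, one can compute $\Hom$ and check finite generation using a finite presentation of $M$ over $A^B = A \otimes_\mathbb{C} B$ together with finite generation of $A$ over $\mathbb{C}$. The key input is then the stability hypothesis, via Lemma \ref{StableLemma}(ii)–(iii): for every $\mathfrak{m}$, the fibre $B/\mathfrak{m} \otimes_B M$ and $B/\mathfrak{m} \otimes_B N$ are $\theta$-stable of the same dimension vector $d$, so $\Hom$ between these fibres is one-dimensional (any non-zero map is an isomorphism, and endomorphisms are scalars). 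Because $M$ and $N$ are $B$-flat and finitely generated, I expect $\Hom_{A^B}(M,N) \otimes_B B/\mathfrak{m}$ to inject into, and in fact surject onto, $\Hom$ of the fibres — here one uses that a map of flat finitely generated modules is determined fibrewise and the semicontinuity/base-change behaviour of $\Hom$. Granting this, $\Hom_{A^B}(M,N)$ is a finitely generated $B$-module whose fibre at every point is one-dimensional, hence (by Nakayama plus the standard local criterion) it is an invertible $B$-module $L := \Hom_{A^B}(M,N)$.

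It then remains to show the canonical evaluation map $M \otimes_B L \to N$ is an isomorphism of $A^B$-modules. Both sides are finitely generated and $B$-flat (for $M \otimes_B L$: $M$ is $B$-flat and $L$ is locally free; for $N$: hypothesis), so it suffices to check that the induced map on each fibre $B/\mathfrak{m} \otimes_B (M \otimes_B L) \to B/\mathfrak{m} \otimes_B N$ is an isomorphism. On the fibre over $\mathfrak{m}$, $L \otimes_B B/\mathfrak{m}$ is the one-dimensional space $\Hom(B/\mathfrak{m}\otimes_B M, B/\mathfrak{m}\otimes_B N)$ spanned by an isomorphism of the fibres (a non-zero element, which is an isomorphism by Lemma \ref{StableLemma}(ii)), and the evaluation map is precisely that isomorphism; hence the fibre map is an isomorphism, and then $M \otimes_B L \cong N$ as $A^B$-modules, giving $M \sim N$.

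The main obstacle I anticipate is the base-change statement for $\Hom$: verifying that $\Hom_{A^B}(M,N) \otimes_B B/\mathfrak{m}$ computes $\Hom_{A^{B/\mathfrak{m}}}(M \otimes_B B/\mathfrak{m}, N \otimes_B B/\mathfrak{m})$ — equivalently, that $\Hom_{A^B}(M,-)$ commutes with this base change. This needs $M$ to have a nice presentation over $A^B$ and $N$ to be $B$-flat, and one must rule out a jump coming from $\mathrm{Ext}^1$ or torsion; the cleanest route is probably to present $M$ by a two-term complex of projective $A^B$-modules, apply $\Hom_{A^B}(-,N)$, and use $B$-flatness of $N$ (and of the terms computing $M$) to commute the resulting kernel with $\otimes_B B/\mathfrak{m}$, combined with the fibrewise $\Hom$ being exactly one-dimensional so that the rank cannot drop. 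Everything else (finite generation, Nakayama, fibrewise isomorphism criterion for flat modules) is standard commutative algebra.
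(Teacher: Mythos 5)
Your overall architecture is the same as the paper's: both directions, with the converse handled by setting $L:=\Hom_{A^B}(M,N)$, proving $L$ is invertible, and checking that the evaluation map $M\otimes_B L\to N$ is an isomorphism locally/fibrewise. The forward direction and the final evaluation step are fine. The genuine gap is in your justification that $L$ is invertible: "finitely generated $B$-module whose fibre at every point is one-dimensional, hence (by Nakayama plus the standard local criterion) invertible" is false unless $B$ is reduced (or $L$ is already known to be flat). For example, over $B=\mathbb{C}[\epsilon]/\epsilon^2$ the module $B/(\epsilon)$ has a one-dimensional fibre at the unique maximal ideal but is not invertible. Non-reduced test algebras cannot be excluded here: $\mathcal{F}_A$ is a functor on all finite type $\mathbb{C}$-algebras, and the whole point of the equivalence $\sim$ (see the appendix, where $\mathbb{C}[\epsilon]/\epsilon^2$ is exactly the test case) is to get the infinitesimal behaviour right. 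The paper avoids this by a different mechanism: it first shows $L_\mathfrak{m}$ is a \emph{free} $B_\mathfrak{m}$-module (as a direct summand of $\Hom_{B_\mathfrak{m}}(B_\mathfrak{m}^d,B_\mathfrak{m}^d)$, using that $M_\mathfrak{m}$ and $N_\mathfrak{m}$ are free of the same rank), then uses the hypothesis $M_\mathfrak{m}\cong N_\mathfrak{m}$ to identify $L_\mathfrak{m}$ with $\End_{A^{B_\mathfrak{m}}}(M_\mathfrak{m})$ and exhibits the explicit map $b\mapsto b\cdot\id$, which is injective by faithfulness of the free module $M_\mathfrak{m}$ and surjective by Nakayama, because on the closed fibre it hits all of $\End_A(M_\mathfrak{m}/\mathfrak{m}M_\mathfrak{m})=\mathbb{C}\cdot\id$ by Lemma \ref{StableLemma} iii). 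This also disposes of the $\Hom$-base-change obstacle you flag but do not resolve: one only needs surjectivity of a specific map onto the fibre, not a full base-change isomorphism for $\Hom_{A^B}(M,-)$.

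A secondary point: your converse never actually invokes the hypothesis $M\otimes_B B_\mathfrak{m}\cong N\otimes_B B_\mathfrak{m}$, only $\theta$-stability of the two fibres. But two $\theta$-stable modules of the same dimension vector have \emph{zero} Homs between them unless they are isomorphic (Lemma \ref{StableLemma} ii)), so "stable of the same dimension vector, hence $\Hom$ of fibres is one-dimensional" is not a valid deduction on its own; you must first reduce the given local isomorphism modulo $\mathfrak{m}$ to see that the fibres agree. This is easily repaired, but as written your argument would, if correct, prove the strictly stronger claim that fibrewise isomorphism implies $M\sim N$, which is not what the lemma asserts and would require genuine cohomology-and-base-change input to establish.
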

\begin{proof}
If there exists a rank one locally free $L$ such that $M \otimes_B L \cong N$ then it is clear that $M$ and $N$ are locally isomorphic.  

If $M$ and $N$ are locally isomorphic then consider the $B$-module $L:=\Hom_{A^B}(M,N)$. This is a submodule of $\Hom_{B}(M,N)$ hence is a finitely generated $B$-module as $M$ and $N$ are finitely generated as $B$-modules. For any $\mathfrak{m} \in \MaxSpec(B)$ as $M$ and $N$ are locally free of the same rank $M_{\mathfrak{m}}$ and $N_{\mathfrak{m}}$ are free $B_{\mathfrak{m}}$-modules of the same rank. As such $L_{\mathfrak{m}}:=\Hom_{A^{B_{\mathfrak{m}}}}(M_{\mathfrak{m}},N_{\mathfrak{m}}) $ is free as it is a direct summand of $\Hom_{B_{\mathfrak{m}}}(B_{\mathfrak{m}}^d,B_{\mathfrak{m}}^d) \cong B_{\mathfrak{m}}^{d \times d}$. Further, as $M$ and $N$ are locally isomorphic, $M_{\mathfrak{m}}$ and $N_{\mathfrak{m}}$ are isomorphic as $A^{B_{\mathfrak{m}}}$-modules and hence $L_{\mathfrak{m}} \cong \Hom_{A^{B_{\mathfrak{m}}}}(M_{\mathfrak{m}},M_{\mathfrak{m}})$.  Consider the injection of $B_{\mathfrak{m}}$-modules
\begin{align*}
\phi: B_{\mathfrak{m}}& \rightarrow  \Hom_{A^{B_\mathfrak{m}}}(M_{\mathfrak{m}},M_{\mathfrak{m}})\cong L_{\mathfrak{m}} \\
b & \mapsto b \cdot \id.
\end{align*}
It follows that $L_{\mathfrak{m}}/{\mathfrak{m}}L_{\mathfrak{m}} \cong \Hom_{A^{B_{\mathfrak{m}}}}(M_{\mathfrak{m}}/{\mathfrak{m}}M_{\mathfrak{m}},M_{\mathfrak{m}}/{\mathfrak{m}}M_{\mathfrak{m}})$ as $L_\mathfrak{m}$ and $M_\mathfrak{m}$ are free $B_\mathfrak{m}$ modules, and as $M_{\mathfrak{m}}/{\mathfrak{m}}M_{\mathfrak{m}}$ is a $\theta$-stable $A$-module the map $\phi$ restricted to a fibre $ B_{\mathfrak{m}}/{\mathfrak{m}} B_{\mathfrak{m}} \rightarrow L_{\mathfrak{m}}/{\mathfrak{m}} L_{\mathfrak{m}}$ is surjective by Lemma \ref{StableLemma} iii). Hence by Nakayama's Lemma $\phi$ is surjective, and so it is an isomorphism. Therefore $L_{\mathfrak{m}} \cong B_{\mathfrak{m}}$ and $L$ is a locally free $B$-module of rank 1.  Then the natural map $M \otimes_B \Hom_{A^B}(M,N) \rightarrow N$ can be seen to be an isomorphism by considering localisations at all maximal ideals ${\mathfrak{m}} \in \MaxSpec(B)$ where it reduces to the composition of isomorphisms $L_\mathfrak{m} \cong B_\mathfrak{m}$, $M_\mathfrak{m} \otimes_{B_\mathfrak{m}} B_{\mathfrak{m}} \rightarrow M_\mathfrak{m}$, and $M_\mathfrak{m} \cong N_\mathfrak{m}$.
\end{proof}

By the results of King in \cite{KingQGIT} the quiver representation moduli functor is corepresentable.

\begin{ssModuliGIT}[{\cite[Proposition 5.2]{KingQGIT}}] 
 The scheme $\mathcal{M}^{ss}_{d,\theta}$ corepresents the functor $\mathcal{F}_{A,d,\theta}^{ss}$. In particular, closed points of $\mathcal{M}^{ss}_{d,\theta}$ correspond to $S$-equivalence classes of dimension $d$, $\theta$-semistable $A$-modules.
\end{ssModuliGIT}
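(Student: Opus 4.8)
The plan is to deduce this from the geometric invariant theory developed by King in \cite{KingQGIT}, which rests on two inputs: a dictionary between the combinatorial notion of $\theta$-(semi)stability and Mumford's numerical criterion, and the standard formal properties of GIT quotients of affine schemes by reductive groups.

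First I would establish the stability dictionary. Linearising the $G=\GL_d(\mathbb{C})$-action on the affine scheme $\Rep_d(Q,\Lambda)$ by the character $\chi_\theta$ on the trivial bundle, one computes the Hilbert--Mumford weight of a one-parameter subgroup $\lambda$ of $G$ at a point $x$ with associated representation $M_x$. Such a $\lambda$ is the same datum as a $\mathbb{Z}$-grading of each $\mathbb{C}^{d(i)}$, hence a filtration of $M_x$, and the weight is computed from $\theta$ evaluated on the graded pieces; the upshot is that $x$ is GIT-semistable iff $\theta(N)\ge\theta(M_x)=0$ for every subrepresentation $N\subseteq M_x$, i.e.\ iff $M_x$ is $\theta$-semistable, that $x$ is GIT-stable (with stabiliser $\Delta$) iff $M_x$ is $\theta$-stable, and that the closed $G$-orbits inside the semistable locus $U:=\Rep_d(Q,\Lambda)^{ss}_\theta$ are exactly those of the polystable representations, i.e.\ finite direct sums of $\theta$-stable representations of total dimension $d$. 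Feeding this into Mumford's construction, $\mathcal{M}^{ss}_{d,\theta}=\Proj\big(\bigoplus_{n\ge0}\mathbb{C}[\Rep_d(Q,\Lambda)]^{G,\chi_\theta^n}\big)$ is the good quotient of $U$ by $G$, is projective over the affine quotient, and its closed points biject with the closed orbits in $U$. Since the associated graded of a Jordan--Holder filtration realises any $\theta$-semistable module as such a polystable module, and two such modules are $S$-equivalent precisely when their polystable representatives coincide, this yields the ``in particular'' clause.

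Next I would prove corepresentability by writing down the natural transformation $\nu\colon\mathcal{F}^{ss}_{A,d,\theta}\to\Hom_{\mathfrak{Sch}}(-,\mathcal{M}^{ss}_{d,\theta})$. Given $B\in\mathfrak{R}$ and a class represented by $M\in\mathcal{S}^{ss}_{A,d,\theta}(B)$, each summand $e_iM$ is finitely generated and flat over $B$, hence locally free of rank $d(i)$; over a Zariski cover $\{\Spec B_\alpha\}$ of $\Spec B$ trivialising all the $e_iM$ at once, a choice of bases turns $M|_{B_\alpha}$ into a $B_\alpha$-point of $\Rep_d(Q,\Lambda)$ which factors through $U$, because every fibre of $M$ is $\theta$-semistable of dimension $d$ by hypothesis. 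Composing with $U\to\mathcal{M}^{ss}_{d,\theta}$ gives morphisms $\Spec B_\alpha\to\mathcal{M}^{ss}_{d,\theta}$ which, since another choice of bases differs by an element of $G$ and a twist of $M$ by an invertible $B$-module only changes them locally, are independent of all choices and glue to $\Spec B\to\mathcal{M}^{ss}_{d,\theta}$, natural in $B$ and descending to the quotient by $\sim$. For initiality, the universal representation restricts to a tautological $A\otimes\mathcal{O}_U$-module on $U$; glued over an affine open cover of $U$ this is an element of $\mathcal{F}^{ss}_{A,d,\theta}(U)$ whose $\nu$-image is the quotient map $U\to\mathcal{M}^{ss}_{d,\theta}$. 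Given any $\nu'\colon\mathcal{F}^{ss}_{A,d,\theta}\to\Hom_{\mathfrak{Sch}}(-,Y')$, applying $\nu'$ to this tautological family produces a morphism $U\to Y'$ that is constant on $G$-orbits, since translating the family by $g\in G$ yields an isomorphic $A\otimes\mathcal{O}_U$-module; because $\mathcal{M}^{ss}_{d,\theta}$ is a categorical quotient of $U$ it factors uniquely as $U\to\mathcal{M}^{ss}_{d,\theta}\xrightarrow{\phi}Y'$, and surjectivity of $U\to\mathcal{M}^{ss}_{d,\theta}$ then forces $\phi$ to be the unique morphism with $\nu'=\phi\circ\nu$.

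The hard part will be the bookkeeping in the last step: verifying that the locally defined classifying maps are independent of the chosen trivialisations, agree on overlaps, and are unaffected by twisting by an invertible module (so that $\nu$ is well defined on the quotient of $\mathcal{S}^{ss}_{A,d,\theta}$ by $\sim$), together with checking that $G$-translation really produces isomorphic tautological families. By contrast the Hilbert--Mumford calculation in the first step, although it is the conceptual heart of the theorem, is a bounded and essentially linear-algebraic computation.
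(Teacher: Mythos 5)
The paper offers no proof of this statement: it is quoted directly from King's \cite[Proposition 5.2]{KingQGIT}, so the only reference point is King's own argument, and your proposal is a faithful and essentially correct reconstruction of it --- the Hilbert--Mumford computation identifying GIT (semi)stability with $\theta$-(semi)stability and closed orbits with polystable representations, followed by the construction of the classifying natural transformation via local trivialisations of the flat family and the factorisation of any competing transformation through the good quotient of the semistable locus. The one point worth making explicit in the bookkeeping you defer is that your classifying map lands in the open semistable locus $U$ because $B$ is Jacobson (a closed subset of $\Spec B$ missing all maximal ideals is empty), and that your descent through the equivalence $\sim$ is exactly what makes the statement hold for the paper's corrected functor $\mathcal{F}^{ss}_{A,d,\theta}$ rather than the Sekiya--Yamaura variant discussed in Appendix \ref{QuiverModuliForAlgebra}.
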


 If we restrict to stable representations then the functor is representable and has a fine moduli space.

\begin{sModuliGIT}[{\cite[Proposition 5.3]{KingQGIT}}] \label{sModuliGIT}
Suppose $d$ is indivisible and let $\mathcal{M}^{s}_{d,\theta}$ be the open subscheme of $\mathcal{M}^{ss}_{d,\theta}$ corresponding to the stable points. Then $\mathcal{M}^{s}_{d,\theta}$ is a fine moduli space for $\mathcal{F}_{A,d,\theta}^{s}$.
\end{sModuliGIT}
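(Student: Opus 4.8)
Since this is \cite[Proposition 5.3]{KingQGIT}, the plan is to recall King's argument, adapted to the functor $\mathcal{F}^{s}_{A,d,\theta}$ defined above. Write $G=\GL_d(\mathbb{C})$, $\Delta=\mathbb{C}^{*}\subset G$ for its kernel, $\overline{G}=G/\Delta$, and let $Z=\Rep_d(Q,\Lambda)^{s}_{\theta}$, the open $G$-stable subscheme of stable points, so that $\mathcal{M}^{s}_{d,\theta}$ is a geometric quotient $q\colon Z\to\mathcal{M}^{s}_{d,\theta}$. The one geometric input I would import from GIT (Mumford, together with Luna's slice theorem) is that, since by Lemma \ref{StableLemma}(iii) the $G$-stabiliser of any stable point is exactly $\Delta$, the group $\overline{G}$ acts freely on $Z$ and $q$ is a principal $\overline{G}$-bundle, locally trivial in the \'etale topology.

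\textbf{The natural transformation.} Given $B\in\mathfrak{R}$ and a module $M$ representing a class in $\mathcal{F}^{s}_{A,d,\theta}(B)$: since $M$ is finitely generated and flat over the Noetherian ring $B$ it is finitely generated projective, and the idempotents $e_i\in A$ split it as $M=\bigoplus_i e_iM$ with each $e_iM$ locally free of constant fibre rank $d(i)$. Over a Zariski cover $\{\Spec B_f\}$ of $\Spec B$ trivialising all the $e_iM$, a choice of bases turns the $A^{B_f}$-module structure into a $B_f$-point of $\Rep_d(Q,\Lambda)$, which lies in $Z$ because every fibre is $\theta$-stable; composing with $q$ gives a morphism $\Spec B_f\to\mathcal{M}^{s}_{d,\theta}$. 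Changing the bases moves the $Z$-point by $G$ and so leaves the map to $\mathcal{M}^{s}_{d,\theta}$ unchanged, hence these glue to $\varphi_M\colon\Spec B\to\mathcal{M}^{s}_{d,\theta}$; and if $N=M\otimes_B L$ with $L$ invertible then $N$ has the same local data as $M$, so $\varphi_N=\varphi_M$. Thus $M\mapsto\varphi_M$ descends to a transformation $\nu\colon\mathcal{F}^{s}_{A,d,\theta}\to\Hom_{\mathfrak{Sch}}(-,\mathcal{M}^{s}_{d,\theta})$, manifestly natural in $B$.

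\textbf{The tautological family and the inverse.} Here indivisibility of $d$ enters: choose integers $n_i$ with $\sum_i n_i d(i)=1$. On $\Rep_d(Q,\Lambda)$ the tautological free sheaf carries a tautological $A\otimes\mathcal{O}$-module structure, on which $\Delta$ acts with weight $1$; twisting by the $G$-equivariant line bundle $\bigotimes_i\det(\mathbb{C}^{d(i)})^{\otimes(-n_i)}$, on which $\Delta$ acts with weight $-\sum_i n_i d(i)=-1$, gives a $G$-equivariant family on which $\Delta$ acts trivially. Restricting to $Z$ and descending along $q$ yields a flat family $\mathcal{U}$ of $\theta$-stable $A$-modules of dimension vector $d$ over $\mathcal{M}^{s}_{d,\theta}$, i.e. a tautological element of (the extension to all schemes of) $\mathcal{F}^{s}_{A,d,\theta}$ with $\nu([\mathcal{U}])=\id$. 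Setting $\mu(g)=[g^{*}\mathcal{U}]$, the identity $\varphi_{g^{*}\mathcal{U}}=g$ is immediate from the construction, so $\nu\circ\mu=\id$; for $\mu\circ\nu=\id$ one checks that if $g=\varphi_M$ then $M$ and $g^{*}\mathcal{U}$ are, over the common trivialising cover, both pulled back from the same $Z$-point and so Zariski-locally---hence locally---isomorphic as $A^{B}$-modules, whence by the argument of Lemma \ref{EquivalencesLemma} (which used only Lemma \ref{StableLemma}(iii)) the module $\Hom_{A^{B}}(M,g^{*}\mathcal{U})$ is a rank-one locally free $B$-module $L$ with $M\otimes_B L\cong g^{*}\mathcal{U}$, that is $M\sim g^{*}\mathcal{U}$. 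The remaining naturality checks are formal, so $\nu$ is a natural isomorphism and $\mathcal{M}^{s}_{d,\theta}$ is a fine moduli space for $\mathcal{F}^{s}_{A,d,\theta}$ with tautological family $\mathcal{U}$.

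\textbf{Main obstacle.} The only non-formal point is the assertion that $q\colon Z\to\mathcal{M}^{s}_{d,\theta}$ is a principal $\overline{G}$-bundle---that the $\overline{G}$-action on the stable locus is free with an \'etale-locally trivial quotient---which is exactly the content one takes from GIT plus Luna's slice theorem; granting this, and the indivisibility needed to write $\sum_i n_i d(i)=1$ for the descent of the tautological module, everything else is the bookkeeping above.
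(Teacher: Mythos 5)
The paper does not prove this statement: it is quoted directly from \cite[Proposition 5.3]{KingQGIT}, with the only paper-specific point being that the equivalence relation on families must be taken to be $\sim$ (tensoring by an invertible $B$-module, equivalently local isomorphism via Lemma \ref{EquivalencesLemma}) rather than fibrewise $S$-equivalence, as discussed in Remark \ref{FunctorRemark} and Appendix \ref{QuiverModuliForAlgebra}. Your reconstruction is a correct rendering of King's argument — the principal $\overline{G}$-bundle structure on the stable locus, descent of the tautological family using $\sum_i n_i d(i)=1$, and the inverse via local isomorphism — and you have correctly handled exactly the point where the paper's functor differs from King's, by invoking Lemma \ref{EquivalencesLemma} to pass from Zariski-local isomorphism of $M$ and $g^{*}\mathcal{U}$ to the equivalence $\sim$.
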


We note that when $d$ is indivisible and $\theta$ is generic all semistable points are stable and $\mathcal{M}^{ss}_{d,\theta}=\mathcal{M}^{s}_{d,\theta}$ is a fine moduli space. We will later restrict to considering such cases. 

We will often just refer to the functor as $\mathcal{F}_{A}$, recalling the choices of semistablity or stability, $d$, and $\theta$ only when necessary.  We also note that the tautological element for $\mathcal{F}^s_{A,d,\theta}$ is a vector bundle on $\mathcal{M}^s_{d,\theta}$ with each fibre corresponding to a $\theta$-stable representation of $A$ with dimension vector $d$ which we refer to as the \emph{tautological bundle}. 

\begin{FunctorRemark} \label{FunctorRemark} The functor here differs from the functor considered in Sekiya Yamaura, \cite[Definition 4.1]{SekiyaYamaura}, but their results also hold for this functor. See Appendix \ref{QuiverModuliForAlgebra} for more details.  

We also note that the assumption that $A$ is presented as a quiver with relations is not necessary; for any algebra which is finitely generated over a commutative Noetherian ring Van den Bergh defines a functor analogous to $\mathcal{F}^{s}_{A,d,\theta}$  and proves that such a functor is representable when $d$ is indivisible and $\theta$ is generic \cite[Proposition 6.2.1]{NCResolutions}.  We note that local equivalence is used in this setting.
\end{FunctorRemark}

\subsection{Geometric Moduli Functors} \label{Geometric moduli functors}
We define a similar functor for a scheme, $X$, arising in a projective morphism, $\pi: X \rightarrow \Spec(R)$, of finite type schemes over $\mathbb{C}$.

We first introduce several pieces of notation which we will frequently use. Let $\rho:X \rightarrow \Spec(\mathbb{C})$ denote the structure morphism. For $B \in \mathfrak{R}$ we define $X^B:= X \times_{\Spec(\mathbb{C})} \Spec(B)$ and consider the following pullback diagram
\begin{align*} 
\begin{tikzpicture} [bend angle=0]
\node (C1) at (0,0)  {$\Spec(B)$};
\node (C2) at (2,0)  {$\Spec(\mathbb{C})$};
\node (C3) at (0,1.6)  {$X^B$};
\node (C4) at (2,1.6)  {$X$};
\draw [->] (C1) to node[above]  {} (C2);
\draw [->] (C4) to node[right]  {\scriptsize{$\rho$}} (C2);
\draw [->] (C3) to node[left]  {\scriptsize{$\rho^B$}} (C1);
\draw [->] (C3) to node[above]  {\scriptsize{$\rho^X$}} (C4);
\end{tikzpicture}
\end{align*}
which defines the morphisms $\rho^B$ and $\rho^X$ from the structure morphism $\rho:X \rightarrow \Spec(\mathbb{C})$. We note that $X^B$ is also of finite type over $\mathbb{C}$, and has a projective morphism $\pi^B:X^B \rightarrow \Spec(R \otimes_{\mathbb{C}}B)$, see \cite[Remark 1.7]{BHTilting}. Also if $X$ has a tilting bundle $T$ the following result, which is a particular case of the result \cite[Proposition 2.9]{BHTilting} of Buchweitz and Hille, defines a tilting bundle $T^B$ on $X^B$.

\begin{TiltingPullback}[{\cite[Proposition 2.9]{BHTilting}}]
If $T$ is a tilting bundle on $X$ and $A=\End_{X}(T)^{\op}$ then $T^B:=\mathbb{L}\rho^{X*}T$ is a tilting bundle on $X^B$, and $A^B:=\End_{X^B}(T^B)^{\op}=A \otimes_{\mathbb{C}} B$.
\end{TiltingPullback}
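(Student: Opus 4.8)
The plan is to verify the three conditions of Definition \ref{TiltingDef} for $T^B$ on $X^B$, together with the asserted algebra isomorphism, by reducing each to the corresponding statement for $T$ on $X$ via flat base change along the square in the statement.

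First I would record the basic properties of $\rho^X$. The morphism $\Spec(B) \to \Spec(\mathbb{C})$ is flat (it is a morphism to the spectrum of a field) and affine, so its base change $\rho^X : X^B \to X$ is flat and affine as well. Flatness gives $T^B = \mathbb{L}\rho^{X*}T = \rho^{X*}T$, and since $T$ is locally free of finite rank so is $T^B$; in particular $T^B$ is a perfect complex on $X^B$. As $X^B$ is of finite type over $\mathbb{C}$ (indeed quasi-projective, as noted before the statement), $D(X^B)$ is compactly generated with compact objects the perfect complexes, so $T^B$ is compact. Condition iii) of Definition \ref{TiltingDef} is then immediate, since $\Hom_{D(X^B)}(T^B,-)$ commutes with small direct sums for any compact object.

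Next, for condition i) and the algebra identity I would compute $\mathbb{R}\Hom_{X^B}(T^B,T^B)$. Because $T$ is locally free, $\RCalHom_{X^B}(T^B,T^B) \cong \rho^{X*}\CalHom_X(T,T)$, and $\CalHom_X(T,T) = T^{\vee}\otimes_{\mathcal{O}_X} T$ is again locally free. Applying flat base change along the pullback square, $\mathbb{R}\rho^B_*\rho^{X*}\CalHom_X(T,T)$ is the pullback to $\Spec(B)$ of $\mathbb{R}\rho_*\CalHom_X(T,T)$; since $B$ is flat over the field $\mathbb{C}$, derived and underived pullback agree, and taking global sections yields $\mathbb{R}\Hom_{X^B}(T^B,T^B) \cong \mathbb{R}\Hom_X(T,T)\otimes_{\mathbb{C}} B$. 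In nonzero cohomological degree the right-hand side vanishes because $T$ is tilting, which is condition i); in degree zero it gives $\End_{X^B}(T^B) \cong \End_X(T)\otimes_{\mathbb{C}} B$, and passing to opposite algebras (using that $B$ is commutative) gives $A^B = \End_{X^B}(T^B)^{\op} \cong A\otimes_{\mathbb{C}} B$.

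Finally, for condition ii) I would use that $T^B$ is compact in the compactly generated category $D(X^B)$, so it suffices to show that $\mathbb{R}\Hom_{X^B}(T^B, E[n]) = 0$ for all $n$ forces $E = 0$. Since $\rho^X$ is affine, $\mathbb{R}\rho^X_* = \rho^X_*$, and adjunction gives $\mathbb{R}\Hom_{X^B}(\rho^{X*}T, E) \cong \mathbb{R}\Hom_X(T, \rho^X_* E)$. Vanishing of the left-hand side for all shifts, together with the fact that $T$ generates $D(X)$, forces $\rho^X_* E = 0$; since $\rho^X$ is affine, $\rho^X_*$ is exact and faithful, hence conservative on the derived category, so $E = 0$. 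Thus $T^B$ generates $D(X^B)$, and $T^B$ is a tilting bundle. I expect the only real subtlety to be stating and applying flat base change cleanly for the non-affine morphisms $\rho$ and $\rho^B$ — which requires $X$ quasi-compact and separated, true here since $X$ is quasi-projective over $\mathbb{C}$ — and the compatibility of $\RCalHom$ with pullback, which is exactly where local freeness of $T$ enters; everything else is formal.
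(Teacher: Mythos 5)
Your argument is correct. Note that the paper itself gives no proof of this statement — it is quoted verbatim from Buchweitz--Hille \cite[Proposition 2.9]{BHTilting} — so there is nothing internal to compare against; your proof is the standard one and all three conditions of Definition \ref{TiltingDef} are verified cleanly. The two points where care is genuinely needed are exactly the ones you flag: flat base change along the square (valid since $\rho$ is quasi-compact and separated and $\Spec(B)\rightarrow\Spec(\mathbb{C})$ is flat), which yields $\mathbb{R}\Hom_{X^B}(T^B,T^B)\cong\mathbb{R}\Hom_X(T,T)\otimes_{\mathbb{C}}B$ and hence both condition i) and the algebra isomorphism at once; and the generation step, where you correctly combine compactness of $T^B$ (so generation is equivalent to vanishing of the right orthogonal), the adjunction $\mathbb{R}\Hom_{X^B}(\rho^{X*}T,E)\cong\mathbb{R}\Hom_X(T,\rho^X_*E)$, and the fact that $\rho^X_*$ is exact and faithful for the affine morphism $\rho^X$, hence conservative on derived categories.
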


We introduce a further piece of notation. For any $B \in \mathfrak{R}$ we let $\MaxSpec(B)$ denote the closed points of $\Spec(B)$, and any $p \in \MaxSpec(B)$ there is a closed immersion $i_p:\Spec(\mathbb{C}) \rightarrow \Spec(B)$ and a pullback diagram 

\[
\begin{array}{c}  
 \tag{$i_p/j_p$}\label{ClosedBaseChange}
 \begin{tikzpicture} [bend angle=0]
\node (C1) at (0,0)  {$\Spec(\mathbb{C})$};
\node (C2) at (2,0)  {$\Spec(B)$};
\node (C3) at (0,1.6)  {$X$};
\node (C4) at (2,1.6)  {$X^B$};
\draw [right hook->] (C1) to node[above]  {\scriptsize{$i_p$}} (C2);
\draw [->] (C4) to node[right]  {\scriptsize{$\rho^B$}} (C2);
\draw [->] (C3) to node[left]  {\scriptsize{$\rho$}} (C1);
\draw [right hook->] (C3) to node[above]  {\scriptsize{$j_p$}} (C4);
\end{tikzpicture}
\end{array}
\]
which we later refer to as the diagram \eqref{ClosedBaseChange}. 

 We can now define the geometric moduli functor. We define $\mathcal{F}_{X}(\mathbb{C})$ to be the set of skyscraper sheaves of $X$ considered up to isomorphism, and for $B \in \mathfrak{R}$ define the sets
\begin{align*}
   \mathcal{S}_X(B) := \left. \left\{ \E \in D^b(X^B) \,\, \begin{array}{ | l } \bullet  \text{ $\mathbb{L}j_p^*\E \in \mathcal{F}_{X}(\mathbb{C})$ for all $p \in \MaxSpec(B)$.} \\ \bullet\text{ $\mathbb{R}\rho^B_*\RCalHom_{X^B}(\mathbb{L}\rho^{X*}\mathcal{F},\E) \in \Perf  (B)$ for all $\mathcal{F} \in \Perf(X)$.} 
 \end{array}  \right\}\right.
\end{align*} 
and the moduli functor
\begin{align*}
 \mathcal{F}_{X}: & \, \mathfrak{R} \rightarrow \mathfrak{Sets} \\
    & B  \mapsto \left. \mathcal{S}_{X}(B) \middle/ \text{$\sim$} \right.
\end{align*} 
where the equivalence $\sim$ is defined by  $\mathcal{E}_1$ being equivalent to $\mathcal{E}_2$ if there is a line bundle $L$ on $\Spec(B)$ such that $\mathcal{E}_1 \otimes_{X^B} \rho^{B*} L \cong \mathcal{E}_2$. We later prove in Theorem \ref{Representability} that $X$ is a fine moduli space for this functor.

\begin{ModuliRemark} \label{ModuliRemark}
It follows immediately from Lemmas \ref{AlternateModuli} and \ref{VitalLemma1}, which we state below, that if $X$ has a tilting bundle $T$ the set $\mathcal{S}_X(B)$ is equivalent to the set
\begin{align*}
 \left. \left\{ \E \in \Coh(X^B) \,\, \begin{array}{ | l } \bullet \text{ $\E$ is flat as a $B$-module.} \\
\bullet  \text{ $j_p^*\E \in \mathcal{F}_{X}(\mathbb{C})$ for all $p \in \MaxSpec(B)$.} \\ \bullet\text{ $\mathbb{R}\Hom_{X^B}(T^B,\E) \in \Perf  (B)$.} \end{array}  \right\}  \right. .
\end{align*} 
\end{ModuliRemark}

\begin{AlternateModuli} \label{AlternateModuli}
Suppose $X$ has a tilting bundle $T$. Then for $\E \in D^b(X^B)$ the condition 
\begin{align*}
 & \bullet\text{ $\mathbb{R}\rho^B_*\RCalHom_{X^B}(\mathbb{L}\rho^{X*}\mathcal{F},\E) \in \Perf  (B)$ for any $\mathcal{F} \in \Perf(X)$}
\intertext{ is equivalent to the condition}
& \bullet\text{ $\mathbb{R}\Hom_{X^B}(T^B,\E) \in \Perf  (B)$.}
\end{align*}
\end{AlternateModuli}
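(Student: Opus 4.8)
The plan is to read both conditions off a single contravariant exact functor of triangulated categories, and to exploit the fact that $\Perf(X)$ is the thick subcategory of $D(X)$ generated by the tilting bundle $T$. Write $\Psi\colon D(X)^{\op}\to D(B)$ for the functor $\mathcal F\mapsto \mathbb R\rho^B_*\RCalHom_{X^B}(\mathbb L\rho^{X*}\mathcal F,\E)$, using the standard identification, in $D(B)$, of $\mathbb R\Hom_{X^B}(-,-)$ with $\mathbb R\rho^B_*\RCalHom_{X^B}(-,-)$; this is an exact functor of triangulated categories. Since $T^B=\mathbb L\rho^{X*}T$, we have $\Psi(T)\cong \mathbb R\Hom_{X^B}(T^B,\E)$, so the second condition is precisely the assertion that $\Psi(T)\in\Perf(B)$, while the first is the assertion that $\Psi(\mathcal F)\in\Perf(B)$ for every $\mathcal F\in\Perf(X)$. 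One implication is then immediate: the locally free coherent sheaf $T$ lies in $\Perf(X)$, so specialising the first condition to $\mathcal F=T$ gives the second.

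For the converse I would first note that $\Perf(X)=\operatorname{thick}(T)$, the thick subcategory of $D(X)$ generated by $T$. Indeed $T$ generates $D(X)$ as a localising subcategory by Definition \ref{TiltingDef}(ii), and $T$ is a compact object of $D(X)$ since it is a perfect complex; because $D(X)$ is compactly generated with compact objects exactly the perfect complexes (Section \ref{Derived Categories and Tilting}), the standard theory of compactly generated triangulated categories identifies the thick subcategory generated by the compact generator $T$ with the subcategory $\Perf(X)$ of all compact objects. Now for any exact functor $F$ of triangulated categories and any object $G$ one has $F(\operatorname{thick}(G))\subseteq\operatorname{thick}(F(G))$, since the preimage under $F$ of a thick subcategory is thick; applied to $\Psi$ this gives $\Psi(\Perf(X))=\Psi(\operatorname{thick}(T))\subseteq\operatorname{thick}(\mathbb R\Hom_{X^B}(T^B,\E))$. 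Hence if the second condition holds, i.e.\ $\mathbb R\Hom_{X^B}(T^B,\E)\in\Perf(B)$, then, as $\Perf(B)$ is itself a thick subcategory of $D(B)$, we get $\Psi(\mathcal F)\in\Perf(B)$ for all $\mathcal F\in\Perf(X)$, which is the first condition.

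The arguments are soft; the only points that need care are structural. One must check that $\mathbb L\rho^{X*}$, $\RCalHom_{X^B}(-,\E)$ and $\mathbb R\rho^B_*$ are genuinely exact functors of triangulated categories, so that $\Psi$ is, and that $\mathbb L\rho^{X*}\mathcal F$ is perfect for $\mathcal F\in\Perf(X)$ so that $\RCalHom_{X^B}(\mathbb L\rho^{X*}\mathcal F,\E)$ is well defined. The key input, and the step I would expect to need the most justification, is the identification $\Perf(X)=\operatorname{thick}(T)$, which rests on $D(X)$ being compactly generated — valid here because $X$ is Noetherian and quasi-projective over $\mathbb C$ — together with the fact that a compact generator of a compactly generated triangulated category generates its subcategory of compact objects as a thick subcategory.
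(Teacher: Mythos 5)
Your proposal is correct and follows essentially the same route as the paper: the easy direction by specialising to $\mathcal{F}=T$, and the converse by observing that the perfect complexes on $X$ for which the conclusion holds form a thick subcategory containing $T$, which must therefore be all of $\Perf(X)$ because the compact generator $T$ generates $\Perf(X)$ as a thick subcategory (the paper cites the same Neeman--Ravenel lemma you invoke for this). Your repackaging via the exact functor $\Psi$ and the inclusion $\Psi(\operatorname{thick}(T))\subseteq\operatorname{thick}(\Psi(T))$ is just a slightly more explicit phrasing of the paper's argument.
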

\begin{proof}
Define $\mathfrak{T}$ to be the subset of $\Perf(X)$ which consists of the objects $\mathcal{G} \in \Perf(X)$ such that $\mathbb{R}\Hom_{X^B}(\mathbb{L}\rho^{B*}\mathcal{G},\E) \in \Perf(B)$. Then $\mathbb{R}\Hom_{X^B}(T^B,\E) \in \Perf(B)$ if and only if $T \in \mathfrak{T}$. By \cite[Lemma 2.2]{NeemanRavenel} as $T$ is a tilting bundle and $\mathfrak{T}$ is closed under shifts, triangles, and direct summands  $\mathfrak{T}$ contains $T$ if and only if $\mathfrak{T}=\Perf(X)$ .
\end{proof}

\begin{VitalLemma1}[{\cite[Lemma 4.3]{BridgelandEquivalences}}] \label{VitalLemma1}
Let $f:X \rightarrow Y$ be a morphism of finite type schemes over $\mathbb{C}$, and for each closed point $y \in Y$ let $j_y$ denote the inclusion of the fibre $f^{-1}(y)$. Suppose $\mathcal{E} \in D^b(X)$ is such that $\mathbb{L}j_y^*\mathcal{E}$ is a sheaf for all $y$. Then $\mathcal{E}$ is a coherent sheaf on $X$ which is flat over $Y$.
\end{VitalLemma1}

\begin{BridgelandRemark} In the definition of the moduli functor $\mathcal{F}_X$ we could change the set $\mathcal{F}_X(\mathbb{C})$ of skyscraper sheaves up to isomorphism to, for example, the set of perverse point sheaves as defined by Bridgeland, \cite[Section 3]{FlopsDerivedCategories}, to obtain a functor mirroring Bridgeland's perverse point sheaf moduli functor. Indeed, the results of Section \ref{Preliminary Lemmas} and Theorem \ref{BigTheorem3} do not rely on the fact that $\mathcal{F}_X(\mathbb{C})$ consists of skyscraper sheaves up to isomorphism, but Theorem \ref{Representability} and our applications in Section \ref{Applications} do.
\end{BridgelandRemark}

\section{Preliminary Lemmas} \label{Preliminary Lemmas}
In this section we give a series of lemmas required to prove the main results in the next section. 

\subsection{Derived Base Change}
We first recall the following property, which we will make use of several times.

\begin{DualPullback} \label{DualPullback}
Let $f:X \rightarrow Y$ be a quasi-compact, separated morphism of Noetherian schemes over $\mathbb{C}$. Then if $T \in \Perf(Y)$
\begin{equation*}
\mathbb{L}f^* \RCalHom_{Y}(T,\E) \cong \RCalHom_{X}(\mathbb{L}f^* T,\mathbb{L}f^* \E).
\end{equation*} 
for any $\E \in D^b(Y)$.
\end{DualPullback}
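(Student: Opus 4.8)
The plan is to reduce the statement to a local computation and then to the case where the perfect complex $T$ is a single free module of finite rank, using that both sides of the claimed isomorphism are exact functors of $T$ (in the sense that they send distinguished triangles to distinguished triangles and direct summands to direct summands) and that $\Perf(Y)$ is locally generated by $\mathcal{O}_Y$ in the appropriate thick-subcategory sense. Concretely, first I would fix the natural comparison morphism: there is always a canonical map $\mathbb{L}f^* \RCalHom_{Y}(T,\E) \to \RCalHom_{X}(\mathbb{L}f^* T, \mathbb{L}f^* \E)$, obtained by adjunction from the pullback of the evaluation morphism $\RCalHom_Y(T,\E) \otimes^{\mathbb{L}} T \to \E$ together with the monoidality of $\mathbb{L}f^*$ (i.e.\ $\mathbb{L}f^*(\mathcal{F}\otimes^{\mathbb{L}}\mathcal{G}) \cong \mathbb{L}f^*\mathcal{F} \otimes^{\mathbb{L}} \mathbb{L}f^*\mathcal{G}$). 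It then suffices to prove this canonical map is an isomorphism, which, being a question about objects of the derived category, can be checked locally on $X$ and hence locally on $Y$.

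Next I would run the dévissage. Let $\mathfrak{S}$ denote the full subcategory of $\Perf(Y)$ consisting of those $T$ for which the canonical map above is an isomorphism for every $\E \in D^b(Y)$. Since $\mathbb{L}f^*$, $\RCalHom_Y(-,\E)$, and $\RCalHom_X(-,\mathbb{L}f^*\E)$ are all triangulated functors (contravariant in the relevant slot), and since isomorphism is detected objectwise, $\mathfrak{S}$ is closed under shifts, cones (distinguished triangles), and direct summands — exactly the hypotheses needed to invoke \cite[Lemma 2.2]{NeemanRavenel} in the same way as in the proof of Lemma \ref{AlternateModuli}. Therefore $\mathfrak{S} = \Perf(Y)$ provided $\mathfrak{S}$ contains $\mathcal{O}_Y$. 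For $T = \mathcal{O}_Y$ the left-hand side is $\mathbb{L}f^*\RCalHom_Y(\mathcal{O}_Y,\E) \cong \mathbb{L}f^*\E$ and the right-hand side is $\RCalHom_X(\mathbb{L}f^*\mathcal{O}_Y, \mathbb{L}f^*\E) \cong \RCalHom_X(\mathcal{O}_X, \mathbb{L}f^*\E) \cong \mathbb{L}f^*\E$, and one checks the canonical map is the identity under these identifications. (Equivalently one could directly handle $T$ a finite-rank free module and use additivity, but going through $\mathcal{O}_Y$ plus the thick-subcategory argument is cleanest and matches the style already used in the paper.)

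The main obstacle — really the only subtle point — is verifying that the canonical comparison morphism is genuinely an isomorphism in the base case, i.e.\ that the identifications $\mathbb{L}f^*\RCalHom_Y(\mathcal{O}_Y,-) \simeq \mathbb{L}f^*(-)$ and $\RCalHom_X(\mathcal{O}_X,-) \simeq \id$ are compatible with the evaluation-and-adjunction definition of the canonical map; this is a diagram chase using the projection formula and the counit of the $(\mathbb{L}f^*, \mathbb{R}f_*)$ adjunction, and it is routine but must be stated carefully so that one is not merely asserting an abstract isomorphism. One should also note where the hypotheses on $f$ enter: quasi-compactness and separatedness (and Noetherianness of the schemes) guarantee that $\mathbb{L}f^*$ preserves $D^b$ of coherent sheaves well enough and that $\RCalHom$ of complexes with coherent cohomology again has coherent cohomology, so that all the objects involved live in the categories where the cited lemmas apply; I would remark on this rather than belabour it. No further input is needed.
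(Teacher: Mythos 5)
Your proposal takes a genuinely different route from the paper. The paper proves the isomorphism globally and in one stroke: it shows that both sides corepresent the same functor on $D(X)$, via the $(\mathbb{L}f^*,\mathbb{R}f_*)$ adjunction, the identification $\RCalHom_Y(T,\E)\cong T^\vee\otimes^{\mathbb{L}}\E$ for $T$ perfect (together with $T^{\vee\vee}\cong T$), and the projection formula, and then concludes by Yoneda. Your argument instead exhibits a canonical comparison morphism and verifies it is an isomorphism by localisation and d\'evissage in the $T$-variable. Your approach is more elementary in one respect --- it never invokes $\mathbb{R}f_*$ or the projection formula, only the monoidality of $\mathbb{L}f^*$ and evaluation --- but it pays for this by having to construct and track an explicit natural map and by requiring a reduction to the affine case; the paper's Yoneda argument needs no comparison map and no localisation. (Incidentally, the hypotheses on $f$ enter the paper's proof through the projection formula; in your argument they are barely used at all.)

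There is one step you must repair before the argument is complete. As written, your second paragraph runs the d\'evissage globally: you define $\mathfrak{S}\subseteq\Perf(Y)$, observe it is thick, and conclude $\mathfrak{S}=\Perf(Y)$ from $\mathcal{O}_Y\in\mathfrak{S}$ by citing \cite[Lemma 2.2]{NeemanRavenel} ``as in Lemma \ref{AlternateModuli}''. This fails for non-affine $Y$: the thick subcategory of $\Perf(Y)$ generated by $\mathcal{O}_Y$ is all of $\Perf(Y)$ only when $\mathcal{O}_Y$ is a generator (e.g.\ $Y$ affine); for $Y=\mathbb{P}^1$ it does not contain $\mathcal{O}(1)$. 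The citation is not parallel to Lemma \ref{AlternateModuli}, where the object generating the thick subcategory was a tilting bundle, hence a compact generator by hypothesis. The fix is exactly the localisation you announce in your first paragraph but do not actually perform in the second: since both sides of the comparison map commute with restriction to $f^{-1}(V)$ for $V\subseteq Y$ open, and an isomorphism in the derived category may be checked on an open cover, you may replace $Y$ by an affine open $V$, where $T|_V$ does lie in the thick subcategory generated by $\mathcal{O}_V$, and run the d\'evissage there. With that adjustment, and the routine verification that the canonical map specialises to the identity when $T=\mathcal{O}_Y$, your proof goes through.
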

\begin{proof}

We consider the two functors \begin{align*}
\Hom_{D^b(X)}(\mathbb{L}f^* \RCalHom_{Y}(T,\E),-)&: D^b(X) \rightarrow \mathfrak{Sets}, \text{ and} \\
\Hom_{D^b(X)}( \RCalHom_{X}(\mathbb{L}f^*T,\mathbb{L}f^*\E),-)&: D^b(X) \rightarrow \mathfrak{Sets}.
\end{align*}
 We will show these are naturally isomorphic, and it then follows that $\mathbb{L}f^* \RCalHom_{Y}(T,\E) \cong \RCalHom_{X}(\mathbb{L}f^*T,\mathbb{L}f^*\E)$ as they represent the same functor under the Yoneda embedding. This follows from the chain of natural isomorphisms
\begin{align*}
\Hom_{D(X)}(\mathbb{L}f^*\RCalHom_{Y}(T,\E), -)  & \cong \Hom_{D(Y)}(\RCalHom_{Y}(T,\E) , \mathbb{R} f_*(-)) \tag{adjunction}\\
&\cong \Hom_{D(Y)}(\E,  T \otimes^{\mathbb{L}}_{Y} \mathbb{R}f_*(-)) \tag{$T$ perfect}\\
& \cong  \Hom_{D(Y)}(\E,  \mathbb{R}f_* ( \mathbb{L}f^* T \otimes^{\mathbb{L}}_{X} (-))) \tag{projection} \\
& \cong  \Hom_{D(X)}(\mathbb{L}f^* \E,  \mathbb{L}f^* T \otimes^{\mathbb{L}}_{Y} (-))  \tag{adjunction}\\
& \cong \Hom_{D(X)} (\RCalHom_{X}(\mathbb{L}f^*T,\mathbb{L}f^* \E),-).  \tag{$\mathbb{L}f^*T$ perfect}
\end{align*}
\end{proof}

We then recall the following derived base change results.
\begin{DerivedBaseChange}\label{Derived Base Change}  Let $\pi:X\rightarrow \Spec(R)$ be a projective morphism of finite type schemes over $\mathbb{C}$, and let $B,C \in \mathfrak{R}$. Consider the following pullback diagram for a morphism $u:\Spec(B) \rightarrow \Spec(C)$, where we use the notation of Section \ref{Geometric moduli functors}. 
\begin{align*}
\begin{tikzpicture} [bend angle=50]
\node (C1) at (0,0)  {$\Spec(B)$};
\node (C2) at (2,0)  {$\Spec(C)$};
\node (C3) at (0,1.6)  {$X^B$};
\node (C4) at (2,1.6)  {$X^C$};
\draw [->] (C4) to node[right]  {\scriptsize{$\rho^C$}} (C2);
\draw [->] (C3) to node[left]  {\scriptsize{$\rho^B$}} (C1);
\draw [->] (C3) to node[above]  {\scriptsize{$v$}} (C4);
\draw [->] (C1) to node[above]  {\scriptsize{$u$}} (C2);
\end{tikzpicture}
\end{align*} Suppose $\E \in D^b(X^C)$. Then
 \begin{equation*} \begin{aligned}[t] \mathbb{L}u^* \mathbb{R} \rho^C_* \E \cong \mathbb{R} \rho^B_* \mathbb{L}v^*  \E. \end{aligned} \end{equation*}

Suppose further that $X$ has a tilting bundle $T$ and define $A=\End_X(T)^{\op}$. If the $A^C$-module $\mathbb{R}\Hom_{X^C}(T^C,\E)$ is flat as a $C$-module then
\begin{equation*}
B \otimes_C \mathbb{R}\Hom_{X^C}(T^C,\E) \cong \mathbb{R}\Hom_{X^B}(T^B,\mathbb{L}v^* \E)
\end{equation*} as $A^B$-modules.

Also, if $L$ is a line bundle on $\Spec(B)$ then \[ \mathbb{R}\Hom_{X^B}(T^B,\E \otimes_{X^B} \rho^{B*} L) \cong \mathbb{R}\Hom_{X^B}(T^B,\E) \otimes_B L\]
as $A^B$-modules.
\end{DerivedBaseChange}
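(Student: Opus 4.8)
The plan is to derive all three isomorphisms from derived base change, Lemma~\ref{DualPullback}, and the projection formula. Two preliminary observations make this work: first, $\mathbb{R}\Hom_{X^C}(T^C,\E)\cong\mathbb{R}\rho^C_*\RCalHom_{X^C}(T^C,\E)$ as complexes of $C$-modules, and similarly over $B$; second, $T^C$ and $T^B$ are perfect complexes (being vector bundles) with $\mathbb{L}v^*T^C\cong T^B$, since both are the derived pullback of $T$ along the projection to $X$ and these projections are compatible with $v$. I would then begin with the base change isomorphism $\mathbb{L}u^*\mathbb{R}\rho^C_*\E\cong\mathbb{R}\rho^B_*\mathbb{L}v^*\E$: this is the standard derived base change theorem for a quasi-compact, separated morphism along a Tor-independent Cartesian square, and the square in the statement is Tor-independent because $X^C=X\times_{\Spec(\mathbb{C})}\Spec(C)$ is flat over $\Spec(C)$ --- locally $\mathcal{O}_{X^C}$ is a free $C$-module, being obtained from $\mathcal{O}_X$ by base change from the field $\mathbb{C}$ --- so no higher $\Tor$'s obstruct.

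For the second statement I would run the chain
\begin{align*}
B\otimes^{\mathbb{L}}_C\mathbb{R}\Hom_{X^C}(T^C,\E)
&\cong \mathbb{L}u^*\mathbb{R}\rho^C_*\RCalHom_{X^C}(T^C,\E)\\
&\cong \mathbb{R}\rho^B_*\mathbb{L}v^*\RCalHom_{X^C}(T^C,\E)\\
&\cong \mathbb{R}\rho^B_*\RCalHom_{X^B}(\mathbb{L}v^*T^C,\mathbb{L}v^*\E)\\
&\cong \mathbb{R}\rho^B_*\RCalHom_{X^B}(T^B,\mathbb{L}v^*\E)\ =\ \mathbb{R}\Hom_{X^B}(T^B,\mathbb{L}v^*\E),
\end{align*}
using in turn that $\mathbb{L}u^*$ on complexes of quasicoherent sheaves over the affine base $\Spec(C)$ is $B\otimes^{\mathbb{L}}_C(-)$; the base change isomorphism just established; Lemma~\ref{DualPullback} applied to $v$ and the perfect complex $T^C$; and $\mathbb{L}v^*T^C\cong T^B$. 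The hypothesis that $\mathbb{R}\Hom_{X^C}(T^C,\E)$ is a $C$-flat $A^C$-module (so, concentrated in degree $0$) then lets us replace $B\otimes^{\mathbb{L}}_C(-)$ by the underived $B\otimes_C(-)$, giving the stated isomorphism (and showing en route that the right-hand side is again concentrated in degree $0$).

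For the third statement I would use the projection formula. Since $\rho^{B*}L$ is a line bundle it passes out of the second argument of $\RCalHom_{X^B}(T^B,-)$, so $\RCalHom_{X^B}(T^B,\E\otimes_{X^B}\rho^{B*}L)\cong\RCalHom_{X^B}(T^B,\E)\otimes_{X^B}\rho^{B*}L$, and then the projection formula for $\rho^B$ applied to the line bundle $L$ on $\Spec(B)$ gives $\mathbb{R}\rho^B_*\bigl(\RCalHom_{X^B}(T^B,\E)\otimes_{X^B}\rho^{B*}L\bigr)\cong\mathbb{R}\Hom_{X^B}(T^B,\E)\otimes_B L$; as $L$ is an invertible $B$-module this tensor product is automatically underived.

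The part that requires care --- rather than a genuine obstacle --- is verifying that each isomorphism above is one of $A^B$-modules, not merely of $B$-modules or complexes of sheaves. This follows because every isomorphism used (derived base change, the isomorphism of Lemma~\ref{DualPullback}, and the projection formula) is natural in the relevant variables, in particular compatible with the endomorphisms of $T$ that act by precomposition; the $A^B=\End_{X^B}(T^B)^{\op}$-action is precomposition by endomorphisms of $T^B$, and under the base change homomorphism $A^C\to A^C\otimes_C B=A^B$ induced by $v$ this action is transported compatibly along the chain, so the composite is $A^B$-linear. I would also record that the base change isomorphism of the first statement holds in the needed generality: $\rho^B$ and $\rho^C$ are quasi-compact and separated (though not proper over $\mathbb{C}$), which together with the Tor-independence noted above is exactly the hypothesis of the general derived base change theorem.
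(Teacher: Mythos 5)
Your proposal is correct and follows essentially the same route as the paper's proof: Tor-independence of the square via flatness of $X^C$ over $\Spec(C)$ for the base change isomorphism, the chain combining that isomorphism with Lemma~\ref{DualPullback} and $\mathbb{L}v^*T^C\cong T^B$ for the second claim, and perfectness of $T^B$ plus the projection formula for the third. Your additional remarks on de-deriving $B\otimes^{\mathbb{L}}_C(-)$ via the flatness hypothesis and on tracking the $A^B$-module structure through the natural isomorphisms are points the paper leaves implicit, but they are consistent with its argument.
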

\begin{proof}

As $X^C$ is flat over $\Spec(C)$, for any $x \in X^C$ and any $b  \in \Spec(B)$ such that $\rho^C(x)=u(b)=c$  we have that $\Tor_i^{\mathcal{O}_{C,c}}(\mathcal{O}_{B,b},\mathcal{O}_{X^C,x})=0$ for all $i\neq 0$. Hence $X^C$ and $\Spec(B)$ are Tor independent over $\Spec(C)$, and so the first result follows from \cite[Lemma 35.16.3(Tag 08IB)]{StacksProject}.

The second result follows by applying the first result and the previous lemma: 
\begin{align*}
B \otimes_C \mathbb{R}\Hom_{X^C}(T^C,\E)& \cong  \mathbb{L}u^* \mathbb{R}\rho^C_* \RCalHom_{X^C}(T^C,\E) \\
& \cong \mathbb{R}\rho^B_* \mathbb{L}v^* \RCalHom_{X^C}(T^C,\E)  \tag{$\mathbb{L}u^* \mathbb{R} \rho^C_* \cong \mathbb{R} \rho^B_* \mathbb{L}v^* $}\\
& \cong \mathbb{R}\rho^B_* \RCalHom_{X^B}(\mathbb{L}v^*T^C,\mathbb{L}v^* \E) \tag{Lemma \ref{DualPullback}} \\
& \cong \mathbb{R}\Hom_{X^B}(T^B,\mathbb{L}v^*\E).
\end{align*}

The final assertion follows by the projection formula (\cite[Lemma 20.41.3 (Tag 0B54)]{StacksProject}:
\begin{align*}
 \mathbb{R}\Hom_{X^B}(T^B,\E \otimes_{X^B} \rho^{B*} L)&:= \mathbb{R}\rho^B_* \CalHom_{X^B}(T^ B,\E \otimes_{X^B} \rho^{B*} L) \\
&= \mathbb{R}\rho^B_*((T^{B})^\vee \otimes_{X^B}\E \otimes_{X^B} \rho^{B*} L)  \tag{$T^B$ perfect}\\
&= \mathbb{R}\rho^B_*((T^{B})^\vee \otimes_{X^B}\E) \otimes_{B} L   \tag{projection formula} \\
&= \mathbb{R}\rho^B_* \CalHom_{X^B}(T^ B,\E )\otimes_{B} L  \tag{$T^B$ perfect} \\
&=\mathbb{R}\Hom_{X^B}(T^B,\E) \otimes_{B} L.
\end{align*}
\end{proof}

The following corollary is also useful.
\begin{DerivedBaseChangeCor} \label{DerivedBaseChangeCor}Let $X$ be a scheme of finite type over $\mathbb{C}$, and let $B \in \mathfrak{R}$. Suppose that $\E \in D^b(X^B)$ is such that $\mathbb{R}\rho^B_* \E \in D^b(B)$, and that for any $p \in \MaxSpec(B)$ with diagram \eqref{ClosedBaseChange} we have that $\mathbb{R}\rho_* \mathbb{L} j_p^* \E$ is a coherent sheaf on $\Spec(\mathbb{C})$. Then  $\mathbb{R}\rho^B_*\E$ is a flat coherent sheaf on $\Spec(B)$.
\end{DerivedBaseChangeCor}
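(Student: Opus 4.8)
The plan is to apply the criterion from Lemma \ref{VitalLemma1} to the complex $\mathbb{R}\rho^B_*\E$ viewed as an object on $\Spec(B)$, which is a morphism of finite type schemes over $\mathbb{C}$ (the identity on $\Spec(B)$, or rather $\Spec(B)\to\Spec(B)$). Concretely, by Lemma \ref{VitalLemma1} it suffices to show that for every closed point $p\in\MaxSpec(B)$ the derived fibre $\mathbb{L}i_p^*\,\mathbb{R}\rho^B_*\E$ is a coherent sheaf concentrated in degree zero on $\Spec(\mathbb{C})$, together with the a priori bound $\mathbb{R}\rho^B_*\E\in D^b(B)$ which is part of the hypothesis. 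The conclusion "flat coherent sheaf on $\Spec(B)$" then follows because a bounded complex of $B$-modules all of whose derived fibres at closed points are concentrated in degree zero is quasi-isomorphic to a single flat $B$-module (this is precisely the content of Lemma \ref{VitalLemma1} applied to $\mathrm{id}:\Spec(B)\to\Spec(B)$, whose fibres are the closed points).

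The key step is then to identify the derived fibre $\mathbb{L}i_p^*\,\mathbb{R}\rho^B_*\E$ with $\mathbb{R}\rho_*\,\mathbb{L}j_p^*\E$. This is exactly the first derived base change statement of Lemma \ref{Derived Base Change}, applied to the pullback square \eqref{ClosedBaseChange}: taking $C=B$, replacing the morphism $u$ by the closed immersion $i_p:\Spec(\mathbb{C})\to\Spec(B)$, and $v$ by $j_p:X\to X^B$, we get $\mathbb{L}i_p^*\,\mathbb{R}\rho^B_*\E\cong\mathbb{R}\rho_*\,\mathbb{L}j_p^*\E$. The Tor-independence needed there is automatic since $X^B$ is flat over $\Spec(B)$ (as $X$ is flat, being of finite type over $\mathbb{C}$, pulled back along $\Spec(B)\to\Spec(\mathbb{C})$), exactly as in the proof of Lemma \ref{Derived Base Change}. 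By hypothesis $\mathbb{R}\rho_*\,\mathbb{L}j_p^*\E$ is a coherent sheaf on $\Spec(\mathbb{C})$, i.e.\ a finite-dimensional vector space in degree zero, so $\mathbb{L}i_p^*\,\mathbb{R}\rho^B_*\E$ is concentrated in degree zero for every $p$.

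Putting these together: first I would invoke the flatness of $X^B\to\Spec(B)$ and the base change lemma to get the isomorphism $\mathbb{L}i_p^*\,\mathbb{R}\rho^B_*\E\cong\mathbb{R}\rho_*\,\mathbb{L}j_p^*\E$ for each closed point $p$; second, I would use the two hypotheses ($\mathbb{R}\rho^B_*\E\in D^b(B)$ and $\mathbb{R}\rho_*\,\mathbb{L}j_p^*\E$ coherent in degree zero) to see that $\mathbb{R}\rho^B_*\E$ is a bounded complex whose derived restriction to every closed point of $\Spec(B)$ is a sheaf in degree zero; third, I would apply Lemma \ref{VitalLemma1} to $\mathrm{id}:\Spec(B)\to\Spec(B)$ to conclude that $\mathbb{R}\rho^B_*\E$ is a coherent sheaf on $\Spec(B)$ flat over $\Spec(B)$, which is the assertion. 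I do not expect any genuine obstacle here; the only point requiring a little care is the bookkeeping to make sure Lemma \ref{VitalLemma1}'s hypothesis (that $\mathbb{L}j_y^*$ of the object is a sheaf for all closed $y$) is met, which is why the boundedness hypothesis $\mathbb{R}\rho^B_*\E\in D^b(B)$ is included in the statement, and the identification of the derived fibres via base change is the substantive input.
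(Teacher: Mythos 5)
Your proposal is correct and follows essentially the same route as the paper: apply Lemma \ref{VitalLemma1} to $\Spec(B)$ to reduce to checking that each derived fibre $\mathbb{L}i_p^*\,\mathbb{R}\rho^B_*\E$ is a sheaf, then identify that fibre with $\mathbb{R}\rho_*\,\mathbb{L}j_p^*\E$ via the base change of Lemma \ref{Derived Base Change} applied to the square \eqref{ClosedBaseChange}. The paper's proof is exactly this two-step argument, so there is nothing to add.
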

\begin{proof}
By assumption $\mathbb{R}\rho^B_* \E \in D^b(\Spec(B))$, hence by Lemma \ref{VitalLemma1} if $\mathbb{L}i_p^* \mathbb{R} \rho^B_* \E $ is a sheaf for all embeddings of closed points, $i_p:\Spec(\mathbb{C}) \rightarrow \Spec(B)$, then $\mathbb{R}\rho^B_*\E$ is a flat coherent sheaf on $\Spec(B)$. For any such $p \in \MaxSpec(B)$ by Theorem \ref{Derived Base Change} $\mathbb{L}i_p^* \mathbb{R} \rho^B_* \E \cong \mathbb{R} \rho_* \mathbb{L}j_p^*  \E$ which is a sheaf  by the hypothesis.
\end{proof}

\subsection{Natural Transformations} \label{Natural Transformation} In this section let $\pi:X\rightarrow \Spec(R)$ be a projective morphism of finite type schemes over $\mathbb{C}$. Suppose that $X$ has a tilting bundle $T$ and that $A=\End_{X}(T)^{\op}$ is presented as a quiver with relations. Choose some dimension vector $d$ and stability condition $\theta$ in order to define $\mathcal{F}_{A}:= \mathcal{F}_{A,d,\theta}^{ss}$.  We aim to define a natural transformation, $\eta$, between the moduli functors $\mathcal{F}_{X}$ and $\mathcal{F}_{A}$ defined in Sections \ref{Geometric moduli functors} and \ref{Quiver GIT moduli functors}. We define $\eta:\mathcal{F}_X \rightarrow \mathcal{F}_A$  by 
\begin{align*}
\eta_B: \mathcal{F}_X(B) &\rightarrow \mathcal{F}_A(B) \\
\E & \mapsto \mathbb{R}\Hom_{X^B}(T^B,\E)
\end{align*}
for any $B \in \mathfrak{R}$, and we must check when this is well defined. 

\begin{VitalLemma2}  \label{VitalLemma2} Suppose $\eta_{\mathbb{C}}$ is well defined. Then $\eta$ is a well defined natural transformation and $\eta_B$ is injective for all $B \in \mathfrak{R}$.
\end{VitalLemma2}
\begin{proof} To prove that $\eta$ is well defined we must check the following for any $ B \in \mathfrak{R}$ and any $\E \in \mathcal{F}_X(B)$:
\begin{enumerate}
\item[i)]
$\mathbb{R}\Hom_{X^B}(T^B,\E)$ is a $B$-module which is flat and finitely generated.

\item[ii)]
 For all maximal ideals $\mathfrak{m}$ of $B$ the $A$-module $B/\mathfrak{m}\, \otimes_{B} \mathbb{R}\Hom_{X^B}(T^B,\E)$ is in $\mathcal{F}_A(\mathbb{C})$.

\item[iii)]
If $\E_1$ and $\E_2$ are equivalent in $\mathcal{F}_X(B)$ then $\mathbb{R}\Hom_X(T,\E_1)$ and $\mathbb{R}\Hom_X(T,\E_2)$ are equivalent in $\mathcal{F}_A(B)$. 
\end{enumerate}

Firstly we check i). It follows from the definition of $\mathcal{F}_{X}(B)$ that $\mathbb{R}\Hom_{X^B}(T^B,\E) \in \Perf(B) \subset D^b(B)$. Then by Lemma \ref{DerivedBaseChangeCor} if $
  \mathbb{R}\rho_* \mathbb{L}j_p^* \RCalHom_{X^B}(T^B,\E)$ is a sheaf on $\Spec(\mathbb{C})$ for all $p \in \MaxSpec(B)$ with diagrams \eqref{ClosedBaseChange} then \[  \mathbb{R}\rho^B_* \RCalHom_{X^B}(T^B,\E) \cong \mathbb{R}\Hom_{X^B}(T^B,\E) \] is a flat and finitely generated $B$-module.  For all $p \in \MaxSpec(B)$ with diagrams \eqref{ClosedBaseChange}
\begin{equation*}
  \mathbb{R}\rho_* \mathbb{L}j_p^* \RCalHom_{X^B}(T^B,\E)\cong\mathbb{R} \Hom_X(T,\mathbb{L}j_p^* \E),
\end{equation*}
 by Lemma \ref{DualPullback} and $\mathbb{R} \Hom_X(T,\mathbb{L}j_p^* \E) \in \mathcal{F}_A(\mathbb{C})$ as $\mathbb{L}j_p^*\E \in \mathcal{F}_X(\mathbb{C})$ by the definition of $\mathcal{F}_X(B)$ and as $\eta_{\mathbb{C}}$ is well defined. Hence    $\mathbb{R}\rho_* \mathbb{L}j_p^* \RCalHom_{X^B}(T^B,\E)\cong\mathbb{R} \Hom_X(T,\mathbb{L}j_p^* \E)$  is a coherent sheaf on $\Spec(\mathbb{C})$, so we have proved i).

Secondly, to prove ii), we note for any maximal ideal $\mathfrak{m}$ of $B$ there is a corresponding closed point $p \in \MaxSpec(B)$ and diagram \eqref{ClosedBaseChange}.  Then if $\mathcal{E} \in \mathcal{F}_{X}(B)$ for each maximal ideal we have $B/\mathfrak{m} \otimes_B \mathbb{R}\Hom_{X^B}(T^B,\mathcal{E})  \cong \mathbb{R}\Hom_{X}(T,\mathbb{L}j_p^*\mathcal{E})$ by Lemma \ref{Derived Base Change} as $\mathbb{R}\Hom_{X^B}(T^B,\mathcal{E})$ is a flat $B$ module. Hence $B/\mathfrak{m} \otimes_B \mathbb{R}\Hom_{X^B}(T^B,\mathcal{E})  \in \mathcal{F}_{A}(\mathbb{C})$  as $\eta_{\mathbb{C}}$ is well defined and $\mathbb{L}j_p^*\mathcal{E} \in \mathcal{F}_{X}(\mathbb{C})$ by the definition of $\mathcal{F}_X(B)$. 

To prove part iii) let  $\E_1$ and $\E_2$ be equivalent elements of $\mathcal{F}_{X}(B)$. Then there exists some line bundle $L$ on $\Spec(B)$ such that $\E_1 \otimes_{X^B} \rho^{B*} L \cong \E_2$, and so by Lemma \ref{Derived Base Change}
\begin{align*} \mathbb{R}\Hom_{X^B}(T^B,\E_2) & \cong \mathbb{R}\Hom_{X^B}(T^B,\E_1\otimes_{X^B} \rho^{B*} L ) \\
& \cong  \mathbb{R}\Hom_{X^B}(T^B,\E_1 )\otimes_{B} L.
\end{align*}
This shows that $\eta_B(\E_1)$ and $\eta_B(\E_2)$ are equivalent in $\mathcal{F}_X(B)$ and proves part iii).

We now show that $\eta$ is a natural transformation. Suppose that $B,C \in \mathfrak{R}$ and $u:\Spec(B)\rightarrow \Spec(C)$, then we have the base change diagram
\begin{align*}
\begin{tikzpicture} [bend angle=0]
\node (C3) at (0,1.6)  {$\Spec(B)$};
\node (C4) at (2,1.6)  {$\Spec(C)$};
\node (C5) at (0,3.2)  {$X^B$};
\node (C6) at (2,3.2)  {$X^C$};
\draw [->] (C3) to node[above]  {\scriptsize{$u$}} (C4);
\draw [->] (C5) to node[above]  {\scriptsize{$v$}} (C6);
\draw [->] (C6) to node[right]  {\scriptsize{$\rho^C$}} (C4);
\draw [->] (C5) to node[left]  {\scriptsize{$\rho^B$}} (C3);
\end{tikzpicture}
\end{align*}
and we consider the diagram 
\begin{align*}
\begin{tikzpicture} [bend angle=0]
\node (C3) at (0,1.6)  {$\mathcal{F}_{X}(B)$};
\node (C4) at (5,1.6)  {$\mathcal{F}_{A}(B)$};
\node (C5) at (0,3.2)  {$\mathcal{F}_{X}(C)$};
\node (C6) at (5,3.2)  {$\mathcal{F}_{A}(C)$};
\draw [->] (C3) to node[above]  {\scriptsize{$\mathbb{R}\Hom_{X^B}(T^B,-)$}} (C4);
\draw [->] (C5) to node[above]  {\scriptsize{$\mathbb{R}\Hom_{X^C}(T^C,-)$}} (C6);
\draw [->] (C6) to node[right]  {\scriptsize{$B \otimes_{C} (-)$}} (C4);
\draw [->] (C5) to node[left]  {\scriptsize{$\mathbb{L}v^*$}} (C3);
\end{tikzpicture}
\end{align*}
and to show that $\eta$ is natural we must check that this commutes. For $\E \in \mathcal{F}_{X}(C)$ as $  \mathbb{R}\Hom_{X^C}(T^C,\E)$ is a flat $C$-module
\begin{align*}
B \otimes_C \mathbb{R}\Hom_{X^C}(T^C,\E) \cong  \mathbb{R}\Hom_{X^B}(T^B,\mathbb{L}v^*\E)
\end{align*}
as $A^B$-modules by Lemma \ref{Derived Base Change}. Hence $\eta$ is natural. 

We now show that $\eta_B$ is injective.  Suppose that $\E_1, \E_2 \in \mathcal{F}_{X}(B)$ and $\mathbb{R}\Hom_{X^B}(T^B,\mathcal{E}_1)$ is equivalent to $\mathbb{R}\Hom_{X^B}(T^B,\mathcal{E}_2)$, hence there exists an invertible $B$-module $L$ such that $\mathbb{R}\Hom_{X^B}(T^B,\mathcal{E}_1) \otimes_B L \cong \mathbb{R}\Hom_{X^B}(T^B,\mathcal{E}_2)$. By Lemma \ref{Derived Base Change} 
\begin{align*}
  \mathbb{R}\Hom_{X^B}(T^B,\mathcal{E}_2) & \cong \mathbb{R}\Hom_{X^B}(T^B,\mathcal{E}_1) \otimes_B L \\
& \cong \mathbb{R}\Hom_{X^B}(T^B,\mathcal{E}_1 \otimes_{X^B} \rho^{B*} L)
\end{align*} and hence $\mathcal{E}_1 \otimes_{X^B} \rho^{B*} L \cong \mathcal{E}_2$ as $\mathbb{R}\Hom_{X^B}(T^B,-)$ is an equivalence of derived categories. Hence $\eta_B$ is injective.
\end{proof}

\begin{VitalLemma3} \label{VitalLemma3} With the assumptions as in Lemma \ref{VitalLemma2}, if $\eta_\mathbb{C}$ is also  bijective with inverse $T \otimes_A^{\mathbb{L}}(-)$ then $\eta_B$ is bijective for all $B\in \mathfrak{R}$.
\end{VitalLemma3}

\begin{proof}
We suppose that $\eta_{\mathbb{C}}$ is bijective with inverse $T \otimes_A^{\mathbb{L}}(-)$ and we show that $\eta_B$ is surjective. We consider $M \in \mathcal{F}_{A}(B)$ and note that as $T^B$ is a tilting bundle there exists some $\mathcal{E} \in D^b(X^B)$ such that $\mathbb{R}\Hom_{X^B}(T^B,\mathcal{E}) \cong M$. Then if we can show that $\E \in \mathcal{F}_X(B)$ then we have proved that $\eta_B$ is surjective. We check first that $\mathbb{L}j_p^*\E \in \mathcal{F}_X(\mathbb{C})$ for any $p \in \MaxSpec(B)$ and diagram \eqref{ClosedBaseChange}, and then we check that $\mathbb{R}\Hom_{X^B}(\mathbb{L}\rho^{B*}\mathcal{G},\E) \in \Perf(B)$ for any $\mathcal{G} \in \Perf(X)$. 

Firstly, for any maximal ideal $\mathfrak{m}$ of $B$ there is a corresponding closed point $p \in \MaxSpec(B)$ and diagram \eqref{ClosedBaseChange}, and by Lemma \ref{Derived Base Change}
\begin{align*}
B/\frak{m} \otimes_B M \cong  \mathbb{R}\Hom_{X}(T,\mathbb{L}j_p^*\mathcal{E}) 
\end{align*}
as $M$ is flat over $B$. As $B/\frak{m} \otimes_B M \cong \mathbb{R}\Hom_{X}(T,\mathbb{L}j_p^*\E) \in \mathcal{F}_{A}(\mathbb{C})$ and $\eta_{\mathbb{C}}$ is bijective with inverse $T \otimes_{A}^{\mathbb{L}}(-)$ it follows that $\mathbb{L}j_p^*\mathcal{E} \cong T  \otimes_{A}^{\mathbb{L}}\mathbb{R}\Hom_{X}(T,\mathbb{L}j_p^*\E) \in \mathcal{F}_{X}(\mathbb{C})$. 

As $M$ is a flat and finitely generated $B$-module $\mathbb{R}\Hom_{X^B}(T^B,\E)\cong M \in \Perf(B)$ so the second condition holds by Lemma \ref{AlternateModuli}. Hence $\mathcal{E} \in \mathcal{F}_{X}(B)$ and $\eta_B$ is surjective. 
\end{proof}

\section{Results} \label{Results}

In this section we state our main result, which follows from the previous lemmas, and we also show that the moduli functor $\mathcal{F}_{X}$ is represented by $X$. We will find several applications of these results in the next section.

\begin{BigTheorem3} \label{BigTheorem3}

Let $\pi: X \rightarrow \Spec(R)$ be a projective morphism of finite type schemes over $\mathbb{C}$. Suppose $X$ is equipped with a tilting bundle $T$, define $A=\End_{X}(T)^{\op}$, and suppose that $A$ is presented as a quiver with relations. If there exists a dimension vector $d$ and stability condition $\theta$ defining the moduli functor $\mathcal{F}_{A}:= \mathcal{F}^{ss}_{A,\theta,d}$ such that the tilting equivalence
\begin{center}
\[
\begin{tikzpicture} [bend angle=20, looseness=1]
\node (C1) at (0,0)  {$D^b(X)$};
\node (C2) at (4,0)  {$D^b(A)$};
\draw [->,bend left] (C1) to node[above]  {\scriptsize{$\mathbb{R}\Hom_{X}(T,-)$}} (C2);
\draw [->,bend left] (C2) to node[below]  {\scriptsize{$ T \otimes ^{\mathbb{L}}_{A} (-)$}} (C1);
\end{tikzpicture}
\]
\end{center} restricts to a bijection between $\mathcal{F}_{X}(\mathbb{C})$ and $\mathcal{F}_{A}(\mathbb{C})$ then    the map $\eta:\mathcal{F}_{X} \rightarrow \mathcal{F}_{A}$  defined by $\eta_B: \E \mapsto \mathbb{R}\Hom_{X^B}(T^B,\E)$ is a natural isomorphism.
\end{BigTheorem3}

\begin{proof}
This follows from Lemmas \ref{VitalLemma2} and \ref{VitalLemma3}.
\end{proof}

We now prove that the moduli functor $\mathcal{F}_{X}$ has $X$ as a fine moduli space. This closely follows the proof of the more general result \cite[Theorem 2.10]{Calabrese2013} of Calabrese and Groechenig, which we split into the following lemma and theorem in our setting.

\begin{HilbertModuliLemma}\label{HilbertModuliLemma} Let $\pi:X \rightarrow \Spec(R)$ be a projective morphism of finite type schemes over $\mathbb{C}$. Suppose that $B \in \mathfrak{R}$ and that $\E \in \mathcal{F}_{X}(B)$. Then:
\begin{itemize}
\item[i)] The element $\E$ is a coherent sheaf on $X^B$ that is flat over $\Spec(B)$ and $\mathbb{R}\rho^B_* \E$ is a line bundle on $\Spec(B)$.
\end{itemize}
Let $\iota:Z \rightarrow X^B$ be the schematic support of $\E$. Then:
\begin{itemize}
\item[ii)]  The morphism $\rho^B \circ \iota: Z \rightarrow \Spec(B)$ is an isomorphism.
\item[iii)] There exists a line bundle $L$ on $\Spec(B)$ such that $\E\cong \iota_* \mathcal{O}_Z  \otimes_{X^B} \rho^{B*} L $.  
\end{itemize}
\end{HilbertModuliLemma}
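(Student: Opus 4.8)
The plan is to exploit the defining properties of $\mathcal{F}_X(B)$ together with the fact that they must hold fibrewise, reducing everything to properties of the skyscraper sheaves which constitute $\mathcal{F}_X(\mathbb{C})$. First, for part (i): by Lemma \ref{AlternateModuli} and Lemma \ref{VitalLemma1} (as recorded in Remark \ref{ModuliRemark}), the condition $\mathbb{L}j_p^*\E \in \mathcal{F}_X(\mathbb{C})$ for all closed points $p$ already forces $\E$ to be a coherent sheaf on $X^B$ flat over $\Spec(B)$. To see that $\mathbb{R}\rho^B_*\E$ is a line bundle on $\Spec(B)$, I would apply Corollary \ref{DerivedBaseChangeCor} with the observation that $\mathbb{R}\rho^B_*\E \in D^b(B)$ (this follows since $\mathbb{R}\rho^B_*\RCalHom_{X^B}(\mathbb{L}\rho^{X*}\mathcal{O}_X,\E) = \mathbb{R}\rho^B_*\E \in \Perf(B)$, taking $\mathcal{F} = \mathcal{O}_X$ in the defining condition), and that for each $p$, $\mathbb{R}\rho_* \mathbb{L}j_p^*\E = \mathbb{R}\rho_*\mathcal{O}_x = \mathbb{C}$ is a coherent sheaf on $\Spec(\mathbb{C})$ since $\mathbb{L}j_p^*\E$ is a skyscraper sheaf. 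Thus $\mathbb{R}\rho^B_*\E$ is a flat coherent sheaf on $\Spec(B)$ whose fibre at every closed point is one-dimensional, hence a line bundle.

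For part (ii): let $\iota:Z \hookrightarrow X^B$ be the schematic support of $\E$, and consider $g := \rho^B \circ \iota : Z \to \Spec(B)$. I would check that $g$ is a closed immersion that is also surjective and flat, whence an isomorphism. Surjectivity is fibrewise: over each closed point $p \in \MaxSpec(B)$, the fibre of $\E$ is a nonzero skyscraper sheaf, so the fibre of $Z$ over $p$ is nonempty; since $g$ is proper (as $\pi^B$ is projective and $\iota$ is a closed immersion) its image is closed, and being a closed set containing all closed points of $\Spec(B)$ it is everything. That $g$ is a closed immersion should follow from base-change considerations: fibrewise, $Z_p = \Spec(\mathbb{C})$ maps isomorphically to its image, which is a reduced point, and since the length of the fibre of $\E$ is constantly $1$ (by flatness plus part (i), since $\mathbb{R}\rho^B_*\E$ is a line bundle) a Nakayama/semicontinuity argument shows $\iota_*\mathcal{O}_Z$ is a cyclic $\mathcal{O}_{X^B}$-module with $\rho^B_*\iota_*\mathcal{O}_Z$ a line bundle and $g$ radicial and unramified; combined with flatness this gives $g$ an isomorphism. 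The cleanest route is probably to verify that $g$ is proper, quasi-finite, flat, and induces an isomorphism on every fibre over a closed point, then invoke the standard criterion that such a morphism of finite-type $\mathbb{C}$-schemes is an isomorphism — this is exactly the structure of the argument in \cite[Theorem 2.10]{Calabrese2013}.

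For part (iii): once $g = \rho^B \circ \iota$ is an isomorphism, write $h := g^{-1} : \Spec(B) \to Z$. The sheaf $\E$, viewed as a sheaf on $Z$ via $\iota$, is flat over $\Spec(B)$ with one-dimensional fibres, hence is the pushforward to $Z$ of a line bundle on $\Spec(B)$; concretely, set $L := \mathbb{R}\rho^B_*\E = h^*(\iota^*\E)$ (using that $g$ is an isomorphism so $\rho^B_*\iota_* = h^{-1*}$ on $Z$-sheaves), which is a line bundle by part (i). Then $\iota^*\E \cong h_* L$ on $Z$, and pushing forward along $\iota$ and using the projection formula $\iota_*(h_*L) \cong \iota_*(\mathcal{O}_Z \otimes_Z g^* L) \cong \iota_*\mathcal{O}_Z \otimes_{X^B} \rho^{B*}L$ gives $\E \cong \iota_*\mathcal{O}_Z \otimes_{X^B}\rho^{B*}L$.

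The main obstacle I anticipate is part (ii), specifically establishing rigorously that $g$ is a closed immersion (not merely proper, bijective, and flat). The delicate point is controlling the scheme structure of the support $Z$: flatness of $\E$ over $\Spec(B)$ with constant fibre length $1$ needs to be leveraged to show $\iota_*\mathcal{O}_Z$ is locally cyclic over $\mathcal{O}_{X^B}$ and that $\rho^B_*$ of it is a line bundle, which together with the bijectivity of $g$ on points forces $\mathcal{O}_{\Spec(B)} \to g_*\mathcal{O}_Z$ to be an isomorphism. I would lean on the cited result of Calabrese and Groechenig to handle the technical core of this step rather than reproving it from scratch.
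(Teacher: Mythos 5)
Parts (i) and (iii) of your proposal follow the paper's proof essentially verbatim: (i) is Remark \ref{ModuliRemark} plus Corollary \ref{DerivedBaseChangeCor} applied with $\mathcal{F}=\mathcal{O}_X$, and (iii) is the projection formula once (ii) is known, taking $L$ to be the pushforward of $\iota^*\E$. The problem is part (ii), and you have correctly located the difficulty but not closed it. Your route --- show $g=\rho^B\circ\iota$ is proper, quasi-finite, \emph{flat}, and an isomorphism on closed fibres, then invoke a standard criterion --- requires the flatness of $\mathcal{O}_Z$ over $B$, and this is not supplied by the hypotheses: it is $\E$ that is flat over $B$, and the structure sheaf of the schematic support of a $B$-flat sheaf is not $B$-flat in general (that it is flat here is essentially equivalent to the statement being proved). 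Likewise ``isomorphism on every fibre over a closed point'' has not been verified for the scheme-theoretic fibres $Z_p$: you only control the fibres of $\E$, and $Z_p$ contains but need not a priori equal the support of $j_p^*\E$. Without flatness, ``proper plus bijective plus reduced fibres'' does not force an isomorphism (the normalisation of a cuspidal curve is the standard counterexample), so as written the technical core of (ii) is deferred entirely to the external reference rather than proved.

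The paper closes this gap by a different, purely algebraic argument that sidesteps flatness of $Z$ altogether. First, $\psi:=\rho^B\circ\iota$ is shown to be \emph{affine}: $\pi^B\circ\iota$ is proper with fibres that are points or empty, hence affine by a theorem of Rydh, and composing with the affine morphism $\Spec(R\otimes_{\mathbb{C}}B)\to\Spec(B)$ keeps it affine, so $\psi_*$ is exact. Writing $\mathcal{G}=\iota^*\E$, one then considers the chain of ring maps $\mathcal{O}_B \to \psi_*\mathcal{O}_Z \to \End_{\psi_*\mathcal{O}_Z}(\psi_*\mathcal{G}) \to \End_{\mathcal{O}_B}(\psi_*\mathcal{G})$. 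The composite is an isomorphism because $\psi_*\mathcal{G}\cong\mathbb{R}\rho^B_*\E$ is a line bundle by part (i); the last map is the forgetful map, which is injective and hence an isomorphism; the middle map is injective because $\mathcal{G}$ has full support on $Z$; chasing surjectivity and injectivity around the chain then forces $\mathcal{O}_B\to\psi_*\mathcal{O}_Z$ to be an isomorphism, and affineness of $\psi$ gives $Z\cong\Spec(B)$. If you prefer to complete your own route, the missing ingredient is to show first, by Nakayama (using that $\E\otimes\kappa(z)$ is one-dimensional at every point $z$ of its support, since the closed fibres have length one), that $\E$ is locally cyclic over $\mathcal{O}_{X^B}$ and hence locally isomorphic to $\iota_*\mathcal{O}_Z$; this is what transfers $B$-flatness from $\E$ to $\mathcal{O}_Z$ and lets the rest of your argument go through.
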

\begin{proof} Firstly, as $\E \in \mathcal{F}_X(B)$ it is a coherent sheaf on $X^B$ which is flat over $\Spec(B)$ by Remark \ref{ModuliRemark}. Then $\mathcal{O}_X \in \Perf(X)$ hence by the definition of $\mathcal{F}_X(B)$ we know that $\mathbb{R}\rho^B_*\E = \mathbb{R}\Hom_{X^B}(\mathcal{O}_{X^B},\E) \in \Perf(B) \subset D^b(B)$. It follows that $\mathbb{R}\rho^B_* \E$  is a flat coherent sheaf on $\Spec(B)$ by Corollary \ref{DerivedBaseChangeCor} as for all $p \in \MaxSpec(B)$ with diagrams \eqref{ClosedBaseChange} $\mathbb{R}\rho_* \mathbb{L}j_p^* \E = \mathbb{C}$  as  $\mathbb{L}j_p^* \E$ is a skyscraper sheaf. As $\mathbb{L}i_p^*\mathbb{R}\rho^B_* \E= \mathbb{C}$ the flat coherent sheaf $\mathbb{R}\rho^B_* \E$ has rank 1 and is a line bundle on $\Spec(B)$.

To prove ii)  let $Z$ denote the schematic support of $\E$ with closed immersion $\iota:Z \rightarrow X^B$ and let $\mathcal{G}:=\iota^*\E$ denote the sheaf on $Z$ such that $\iota_* \mathcal{G} \cong \E$. We then have the diagram

\begin{align*}
\begin{tikzpicture} [bend angle=0]
\node (C2) at (2,0)  {$\Spec(B)$};
\node (C3) at (0,2)  {$Z$};
\node (C4) at (2,2)  {$X^B$};
\node (B1) at (4,0) {$\Spec(\mathbb{C})$};
\node (B2) at (4,2) {$X$};
\draw [->] (C4) to node[above]  {\scriptsize{$\rho^{X}$}} (B2);
\draw [->] (C4) to node[right]  {\scriptsize{$\rho^B$}} (C2);
\draw [right hook->] (C3) to node[above]  {\scriptsize{$\iota$}} (C4);
\draw [->] (C3) to node[left]  {\scriptsize{$\psi$}} (C2);
\draw [->] (C2) to node[below]  {} (B1);
\draw [->] (B2) to node[right]  {\scriptsize{$\rho$}} (B1);
\end{tikzpicture}
\end{align*}
where we define $\psi= \rho^B \circ \iota$. We recall that $X^B$ is projective over affine and $\rho^B$ can be factored into 
\begin{equation*}
X^B \xrightarrow{\pi^B} \Spec(R \otimes_{\mathbb{C}} B) \xrightarrow{\alpha^B} \Spec(B)
\end{equation*}
where $\pi^B$ is projective and $\alpha^B$ is affine. We then see that as $\iota$ is a closed immersion, hence proper,  $\pi^B \circ \iota$ is a proper map and it has affine fibres, as the fibres are all empty or points, so is an affine morphism by \cite[Theorem 8.5]{Rydh}. We then conclude that $\psi=\alpha^B \circ (\pi^B \circ \iota)$ is an affine morphism as it is the composition of two affine morphisms, in particular $\psi_*$ is exact.

We recall that $\psi_* \mathcal{G}$ is defined as an $\mathcal{O}_{B}$-module via its definition as an $\psi_* \mathcal{O}_Z$-module by the map of rings
\begin{equation*}
\mathcal{O}_B \rightarrow \psi_* \mathcal{O}_Z \rightarrow \End_{\psi_* \mathcal{O}_Z}(\psi_* \mathcal{G}) \rightarrow \End_{\mathcal{O}_B}(\psi_* \mathcal{G}).
\end{equation*}
Then as $\psi_*\mathcal{G} \cong \mathbb{R} \rho^B_* \E$ is a line bundle this series of maps composes to an isomorphism, hence the first map is injective and the last surjective. We also note that the last map is the forgetful map so is also injective, thus is an isomorphism. Hence the middle map is surjective. Then as the support of $\mathcal{G}$ is $Z$ the middle map is also injective, hence is an isomorphism, so in fact the first map must also be an isomorphism. In particular this implies $\mathcal{O}_B  \cong \psi_* \mathcal{O}_Z$ and as $\psi$ is affine it follows that $Z\cong \Spec (B)$ and $\psi$ is an isomorphism. 

To prove iii) define $L=\psi_*(\mathcal{G})$, which is a line bundle by part i). Then
\begin{align*}
\E &\cong \iota_* \mathcal{G} \\
& \cong \iota_*( \mathcal{O}_Z  \otimes_Z \mathcal{G}) \\
& \cong \iota_*( \mathcal{O}_Z \otimes_Z \iota^* \rho^{B*} L)  \tag{ $\psi=\rho^B \circ \iota$ an isomorphism}\\
& \cong \iota_*( \mathcal{O}_Z ) \otimes_{X^B} \rho^{B*} L.  \tag{projection formula}
\end{align*}
\end{proof}
\begin{Representability} \label{Representability}
Let $\pi:X \rightarrow \Spec(R)$ be a projective morphism of finite type schemes over $\mathbb{C}$. Then there is a natural isomorphism between the functor of points $\Hom_{\mathfrak{Sch}}(-,X)$ and the moduli functor $\mathcal{F}_{X}$. In particular $X$ is a fine moduli space for $\mathcal{F}_{X}$ with tautological bundle $\Delta_* \mathcal{O}_X$ on $X \times_{\Spec(\mathbb{C})} X$ where $\Delta: X \rightarrow X \times_{\Spec(C)} X$ is the diagonal map.
\end{Representability}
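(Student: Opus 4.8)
The plan is to produce mutually inverse natural transformations $\Phi\colon \mathcal{F}_X \to \Hom_{\mathfrak{Sch}}(-,X)$ and $\Psi\colon \Hom_{\mathfrak{Sch}}(-,X)\to\mathcal{F}_X$, using Lemma \ref{HilbertModuliLemma} in one direction and the graph construction in the other. For $B\in\mathfrak{R}$ and $\E\in\mathcal{F}_X(B)$, Lemma \ref{HilbertModuliLemma} gives the schematic support $\iota\colon Z\hookrightarrow X^B$ together with an isomorphism $\psi:=\rho^B\circ\iota\colon Z\xrightarrow{\sim}\Spec(B)$, and I would set $\Phi_B(\E):=\rho^X\circ\iota\circ\psi^{-1}\colon\Spec(B)\to X$. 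Twisting $\E$ by a line bundle pulled back from $\Spec(B)$ leaves the schematic support unchanged, so $\Phi_B$ is well defined on $\sim$-classes. In the other direction I would send a morphism $g\colon\Spec(B)\to X$ to $(\Gamma_g)_*\mathcal{O}_{\Spec(B)}$, where $\Gamma_g:=(g,\id)\colon\Spec(B)\to X^B=X\times_{\Spec(\mathbb{C})}\Spec(B)$ is the graph morphism, which is a closed immersion since $X$ is separated over $\mathbb{C}$.

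The crux is checking that $\Psi_B(g)=(\Gamma_g)_*\mathcal{O}_{\Spec(B)}$ really lies in $\mathcal{S}_X(B)$. Since $\rho^B\circ\Gamma_g=\id$, this sheaf is supported on the graph $Z_g:=\Gamma_g(\Spec(B))$, which maps isomorphically to $\Spec(B)$; hence its stalks are free of rank one over the relevant local rings of $B$, so it is flat over $\Spec(B)$, and by flatness its derived restriction to the fibre over $p\in\MaxSpec(B)$ equals the ordinary restriction, namely the skyscraper $\mathcal{O}_{g(p)}$. For the perfectness condition, given $\mathcal{F}\in\Perf(X)$ the adjunction between $(\Gamma_g)_*$ and $\mathbb{L}\Gamma_g^*$, together with $\rho^X\circ\Gamma_g=g$, yields
\[
\mathbb{R}\rho^B_*\RCalHom_{X^B}(\mathbb{L}\rho^{X*}\mathcal{F},(\Gamma_g)_*\mathcal{O}_{\Spec(B)})\cong\mathbb{R}\Hom_B(\mathbb{L}g^*\mathcal{F},\mathcal{O}_B)\cong(\mathbb{L}g^*\mathcal{F})^{\vee},
\]
which is perfect over $B$ because $\mathbb{L}g^*\mathcal{F}\in\Perf(B)$. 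So $\Psi$ is well defined.

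It then remains to see that $\Phi$ and $\Psi$ are mutually inverse and natural. The composite $\Phi_B\circ\Psi_B$ is the identity since the schematic support of $(\Gamma_g)_*\mathcal{O}_{\Spec(B)}$ is $Z_g$ with $\psi=\rho^B\circ\Gamma_g=\id$, so $\Phi_B(\Psi_B(g))=\rho^X\circ\Gamma_g=g$. Conversely, writing $g:=\Phi_B(\E)$, the universal property of the product $X^B$ forces $\iota\circ\psi^{-1}=\Gamma_g$, so $\Psi_B(g)=\iota_*\mathcal{O}_Z$, which is $\sim$-equivalent to $\E$ by Lemma \ref{HilbertModuliLemma}(iii); hence $\Psi_B\circ\Phi_B$ is the identity on $\mathcal{F}_X(B)$. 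Naturality along $u\colon\Spec(B)\to\Spec(C)$ follows because $\Gamma_{g\circ u}$ is the base change of $\Gamma_g$ along $v\colon X^B\to X^C$ and $(\Gamma_g)_*\mathcal{O}_{\Spec(C)}$ is flat over $\Spec(C)$, so $\mathbb{L}v^*(\Gamma_g)_*\mathcal{O}_{\Spec(C)}=(\Gamma_{g\circ u})_*\mathcal{O}_{\Spec(B)}$ with no higher $\Tor$. Finally, extending $\mathcal{F}_X$ to arbitrary finite-type schemes via its affine-local nature, the tautological element of $\mathcal{F}_X(X)$ corresponds to $\id_X$, and $\Psi_X(\id_X)=(\Gamma_{\id_X})_*\mathcal{O}_X=\Delta_*\mathcal{O}_X$, which identifies the tautological bundle.

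The step I expect to demand the most care is establishing that $\Psi_B(g)$ satisfies the perfectness condition defining $\mathcal{S}_X(B)$, and the accompanying base-change bookkeeping for naturality; both, however, reduce to the displayed adjunction identity and to the flatness of the graph sheaf over the base, so there is no genuinely hard new input beyond Lemma \ref{HilbertModuliLemma}.
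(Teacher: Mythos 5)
Your proposal is correct and follows essentially the same route as the paper: the graph construction $g\mapsto(\Gamma_g)_*\mathcal{O}_{\Spec(B)}$ in one direction, the schematic-support map $\E\mapsto\rho^X\circ\iota\circ\psi^{-1}$ via Lemma \ref{HilbertModuliLemma} in the other, the same adjunction argument for the perfectness condition, and the same identification of the tautological element with $\Delta_*\mathcal{O}_X$. No substantive differences from the paper's argument.
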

\begin{proof}
Consider 
\begin{equation*}
\mu: \Hom_{\mathfrak{Sch}}(-,X) \rightarrow \mathcal{F}_{X}
\end{equation*}
defined by 
\begin{align*}
\mu_C: (g:\Spec(C) \rightarrow X) \mapsto ((\Gamma_g)_* \mathcal{O}_{C})
\end{align*}
for $C \in \mathfrak{R}$, where $\Gamma_g: \Spec(C) \rightarrow X^C$ is the graph of $g$. The graph is a closed immersion as $X$ is separated, and hence $\Gamma_g$ is affine and $(\Gamma_{g})_*$ is exact.

We now show this is a well defined natural transformation. To show that it is well defined we consider a morphism $g: \Spec(C) \rightarrow X$ and check that $(\Gamma_{g})_*\mathcal{O}_C \in \mathcal{F}_X(C)$. Firstly, as $\Gamma_g$ is a closed immersion it is proper, hence $(\Gamma_{g})_*\mathcal{O}_C $ is a coherent sheaf \cite[Lemma 29.17.2 (Tag 0205)]{Stacks Project}. Further, as $\Gamma_g$ is a closed immersion and $\mathcal{O}_C$ is flat over $\Spec(C)$ it follows by considering stalks that $(\Gamma_{g})_*\mathcal{O}_C $ is also flat over $\Spec(C)$. Then as $\Gamma_g$ is affine $j_p^* (\Gamma_{g})_* \mathcal{O}_C \cong (\Gamma_{g \circ i_p})_* i_p^* \mathcal{O}_C$ for all $p \in \MaxSpec(C)$ with diagrams \eqref{ClosedBaseChange} by  \cite[Lemma 29.5.1 (Tag 02KE)]{StacksProject}, hence 
\[ \mathbb{L}j_p^*(\Gamma_{g})_* \mathcal{O}_C \cong j_p^* (\Gamma_{g})_* \mathcal{O}_C \cong (\Gamma_{g \circ i_p})_*i_p^* \mathcal{O}_C \cong \mathcal{O}_{g(p)}. \] Secondly, for any $\mathcal{F} \in \Perf(X)$ both $\mathbb{L}g^* \mathcal{F}$ and its derived dual $\mathbb{R}\Hom_C(\mathbb{L}g^*\mathcal{F},\mathcal{O}_C)$ are in $\Perf(C)$ so $\mathbb{R}\Hom_{X^C}(\mathbb{L}\rho^{X*}\mathcal{F}, (\Gamma_{g})_* \mathcal{O}_C) \cong \mathbb{R}\Hom_{C}(\mathbb{L}g^* \mathcal{F},\mathcal{O}_C)  \in \Perf(C)$. Hence $\mu_C$ is well defined as $(\Gamma_{g})_* \mathcal{O}_{\Spec(C)} \in \mathcal{F}_A(C)$ for any $g \in \Hom_{\mathfrak{Sch}}(\Spec(C),X)$. It is natural as if $B,C \in \mathfrak{R}$ with a morphism $u:\Spec(B) \rightarrow \Spec(C)$ and $g:\Spec(C) \rightarrow X \in \Hom_{\mathfrak{Sch}}(\Spec(C),X)$ we have the diagram 
\begin{align*}
\begin{tikzpicture}[scale=1.2] [bend angle=20]
\node (C3) at (0,1.6)  {$\Spec(B)$};
\node (C4) at (2,1.6)  {$\Spec(C)$};
\node (C5) at (0,3.2)  {$X^B$};
\node (C6) at (2,3.2)  {$X^C$};
\node (C7) at (4,3.2) {$X$};
\node (C8) at (4,1.6) {$\mathbb{C}$};
\draw [->] (C3) to node[above]  {\scriptsize{$u$}} (C4);
\draw [->] (C5) to node[above]  {\scriptsize{$v$}} (C6);
\draw [->] (C6) to node[right]  {\scriptsize{$\rho^C$}} (C4);
\draw [->] (C5) to node[right]  {\scriptsize{$\rho^B$}} (C3);
\draw [->] (C4) to node[above]  {$$} (C8);
\draw [->] (C6) to node[above]  {\scriptsize{$\rho^X$}} (C7);
\draw [->] (C7) to node[right]  {\scriptsize{$\rho$}} (C8);
\draw [->] (C4) to node[below]  {\scriptsize{$g$}} (C7);
\draw [->, bend left] (C4) to node[left]  {\scriptsize{$\Gamma_g$}} (C6);
\draw [->, bend left] (C3) to node[left]  {\scriptsize{$\Gamma_{g\circ u}$}} (C5);
\end{tikzpicture}
\end{align*}
where $g=\rho^X \circ \Gamma_g$, $g \circ u= \rho^X  \circ v \circ \Gamma_{g \circ u} $ and the squares can be seen to be pullback squares using the universal property of pullback squares and the fact that $\rho^B \circ \Gamma_{g \circ u}$ is the identity. As above, as $\Gamma_{g}$ and $\Gamma_{g \circ u }$ are closed immersions 
\[ (\Gamma_{g \circ u})_*   u^* \E  \cong v^*  (\Gamma_{g})_*  \E
\]
for any $\E \in \Coh(\Spec(C))$ by \cite[Lemma 29.5.1 (Tag 02KE)]{StacksProject}.
Hence
\begin{equation*}
\mu_B(g\circ u) \cong \Gamma_{(g \circ u) *}\mathcal{O}_{B} \cong   \Gamma_{(g \circ u) *}u^*\mathcal{O}_{C} \cong  v^* (\Gamma_{g})_*\mathcal{O}_{C} \cong v^*\mu_{B}(g).
\end{equation*}

To show it is a natural isomorphism we need to check that $\mu_B$ is bijective for all $B \in \mathfrak{R}$. We do this now by constructing an inverse $\nu_B$.  For $B \in \mathfrak{R}$, given $\mathcal{E} \in \mathcal{F}_{X}(B)$ we consider its schematic support $Z$ and we then have the diagram
\begin{align*}
\begin{tikzpicture} [bend angle=0]
\node (C2) at (2,0)  {$\Spec(B)$};
\node (C3) at (0,2)  {$Z$};
\node (C4) at (2,2)  {$X^B$};
\node (B1) at (4,0) {$\Spec(\mathbb{C})$};
\node (B2) at (4,2) {$X$};
\draw [->] (C4) to node[above]  {\scriptsize{$\rho^{X}$}} (B2);
\draw [->] (C4) to node[right]  {\scriptsize{$\rho^B$}} (C2);
\draw [right hook->] (C3) to node[above]  {\scriptsize{$\iota$}} (C4);
\draw [->] (C3) to node[left]  {\scriptsize{$\psi$}} (C2);
\draw [->] (C2) to node[below]  {} (B1);
\draw [->] (B2) to node[right]  {\scriptsize{$\rho$}} (B1);
\end{tikzpicture}
\end{align*}
where we define $\psi=  \rho^B \circ \iota$. We recall that $\psi$ is an isomorphism from Lemma \ref{HilbertModuliLemma} ii), and we then consider the map $\rho^X \circ \iota \circ \psi^{-1}: \Spec(B) \rightarrow X \in \Hom_{\mathfrak{Sch}}(\Spec(B),X)$, and our inverse is defined by sending $\E \in \mathcal{F}_{X}(B)$ to this element of $\Hom_{\mathfrak{Sch}}(\Spec(B),X)$:
\begin{align*}
\nu_B: \,&  \mathcal{F}_{X}(B)   \rightarrow \Hom_{\mathfrak{Sch}}(\Spec(B),X)\\
 &\E  \mapsto  \left( \rho_X \circ \iota \circ \psi^{-1} :\Spec(B) \rightarrow X \right).
\end{align*}
Finally we note that this is an inverse, as 
\begin{equation*}
\nu_B \circ \mu_B (g)= \nu_B({\Gamma_g}_*\mathcal{O}_B) = g
\end{equation*}
and
\begin{equation*}
\mu_B \circ \nu_B ( \E) = \mu_B \left( (\rho_X \circ \iota \circ \psi^{-1}): \Spec(B) \rightarrow X^B \right)= {\Gamma_{(\rho_X \circ \iota \circ \psi^{-1})}}_*\left(\mathcal{O}_{B} \right)
\end{equation*}
where we note that ${\Gamma_{(\rho_X \circ \iota \circ \psi^{-1})}}_* (\mathcal{O}_{B})$ is equivalent to $\E$ in $\mathcal{F}_X(B)$ by Lemma \ref{HilbertModuliLemma} iii). Hence $\Hom_{\mathfrak{Sch}}(-,X)$ is naturally isomorphic $\mathcal{F}_{X}$.

Finally, under this identification the identity morphism $id \in \Hom_{\mathfrak{Sch}}(X,X)$ is mapped to the bundle $\Gamma_{id*} \mathcal{O}_X =\Delta_* \mathcal{O}_X$, so this is the tautological element.

\end{proof}

\begin{TautologicalBundleRemark} 
Combining Theorems \ref{BigTheorem3} and \ref{Representability} we can deduce that if there exists a dimension vector $d$ and stability condition $\theta$ such that $\mathbb{R}\Hom_X(T,-)$ and $T \otimes_{A}^{\mathbb{L}}(-)$ restrict to bijections between $\mathcal{F}_X(\mathbb{C})$ and $\mathcal{F}_A(\mathbb{C})$ then $X$ is a fine moduli space for the functor $\mathcal{F}_A$. In particular $d$ must be indivisible and $\theta$ must be generic. Further, in this situation the tautological bundle on $X$ is in fact $T^{\vee}$, as can be seen by translating the tautological element $\Delta_*\mathcal{O}_X$ across the natural isomorphism between $\mathcal{F}_X$ and $\mathcal{F}_A$, so $\End_X(T^{\vee}) \cong \End_X(T)^{\op}=A$ and the dual of the tautological bundle is the tilting bundle $T$.

\end{TautologicalBundleRemark}
\section{Applications} \label{Applications}

Let $\pi:X \rightarrow \Spec(R)$ be a projective morphism of finite type schemes over $\mathbb{C}$, suppose $X$ has a tilting bundle $T$, and suppose that $A=\End_{X}(T)^{\op}$ is presented as a quiver with relations.  In this section we will introduce an indivisible dimension vector $d_T$ and generic stability condition $\theta_T$ defined by a decomposition of the tilting bundle and give general conditions for the map $\eta:\mathcal{F}_X \rightarrow \mathcal{F}_A$ introduced in the previous sections to be a natural isomorphism for this stability condition and dimension vector.  We will then use these general conditions to produce the applications outlined in the introduction.

\subsection{Dimension Vectors and Stability} \label{Dimension Vectors and Stability} We introduce a certain dimension vector and stability condition defined from a decomposition of a tilting bundle and then, using Theorem \ref{BigTheorem3}, we give criterion for $\eta$ to be a natural isomorphism with respect to this stability condition and dimension vector. In order to do this we make the following assumption on $T$, a tilting bundle on a scheme $X$.
\begin{TiltingAssumptions} \label{Assumptions}  
 The tilting bundle $T$ has a decomposition into non-isomorphic indecomposables $T=\bigoplus_{i=0}^n E_i$ such that there is a unique indecomposable, $E_0$, isomorphic to $\mathcal{O}_X$. 
\end{TiltingAssumptions}
We then consider a presentation of $A=\End_X(T)^{\op}$ as the path algebra of a quiver with relations such that each indecomposable $E_i$ corresponds to a vertex $i$ of the quiver, as in Section \ref{Quivers and Tilting bundles}. In particular the 0 vertex in the quiver corresponds to the summand $\mathcal{O}_X$.

\begin{DimensionDefinition}
Suppose $T$ is a tilting bundle $T$ with decomposition $T= \bigoplus_{i=0}^n E_i$. 
\begin{itemize}
\item[i)] The dimension vector $d_T$ is defined by
\[ d_T(i) = \rk E_i. \]
In particular $d_T(0)=1$ as $E_0$ is assumed to be isomorphic to $\mathcal{O}_X$ so $d_T$ is indivisible.
\item[ii)] The stability condition $\theta_{T}$ is defined by 
\[ \theta_T(i) = \left\{ \begin{array}{cc}
         -\sum_{i \neq 0} \rk E_i & \mbox{if $i=0$};\\
         1 & \mbox{otherwise}.\end{array} \right. \] 
\end{itemize}
\end{DimensionDefinition}

\begin{StabilityConditionEquiv} \label{StabilityConditionEquiv}The stability condition $\theta_T$ has the following properties:
\begin{itemize}
\item[i)]
Let $P_0:=\mathbb{R}\Hom_{X}(T,\mathcal{O}_X)$ and $M$ be an $A$-module with dimension vector $d_T$. Then $\Hom_{A}(P_0,M)$ is one dimensional, and $M$ is $\theta_T$-stable if and only if there is a surjection $P_0 \rightarrow M \rightarrow 0$.

\item[ii)]
The stability $\theta_T$ is generic for $A$-modules of dimension $d_T$.
\end{itemize}
\end{StabilityConditionEquiv}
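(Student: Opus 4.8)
The plan is to work directly with the module-theoretic description of the dimension vector $d_T$ and stability condition $\theta_T$, exploiting the fact that $P_0 := \mathbb{R}\Hom_X(T,\mathcal{O}_X)$ is the indecomposable projective $A$-module $A e_0 = \Hom_X(T,E_0)$ at the vertex $0$, which by Assumption \ref{Assumptions} has dimension vector $d_{P_0}(i) = \dim_{\mathbb{C}} \Hom_X(E_i, \mathcal{O}_X)$ — and crucially $d_{P_0}(0) = 1$ since $\End_X(\mathcal{O}_X) = \mathbb{C}$ (as $X \to \Spec R$ has $\mathbb{C}$ as global functions when $X$ is a variety; this is where irreducibility/reducedness enters, or one reduces to that case). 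For part i), first observe that for any $A$-module $M$ with $d_M = d_T$ we have $\Hom_A(P_0, M) = \Hom_A(Ae_0, M) = e_0 M = M_0 \cong \mathbb{C}^{d_T(0)} = \mathbb{C}$, which is the one-dimensional claim. Then I would argue: if $M$ is $\theta_T$-stable, pick the nonzero map $f: P_0 \to M$ corresponding to a generator of $M_0$; its image $N = \Image f$ is a submodule of $M$ with $N_0 = M_0$ (one-dimensional), so $\theta_T(N) = -\sum_{i\neq 0}\rk E_i + \sum_{i \neq 0} d_N(i)$. Since $N \subseteq M$ and $d_M(i) = \rk E_i$, each $d_N(i) \le \rk E_i$; if $N$ were a proper submodule then $\theta_T(N) \ge \theta_T(M) = 0$ forces $d_N(i) = \rk E_i$ for all $i$, i.e. $N = M$, a contradiction — so $N = M$ and $f$ is surjective. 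Conversely, if there is a surjection $P_0 \twoheadrightarrow M$, I would show $M$ is $\theta_T$-stable: any proper nonzero submodule $N \subsetneq M$ has $d_N(0) \le 1$; if $d_N(0) = 0$ then $N_0 = 0$, but $M$ is generated by $M_0$ (being a quotient of $P_0 = Ae_0$) so every submodule meeting $M_0$ trivially... actually one shows $N_0 = 0$ would make $N$ not generate, forcing $N \subseteq$ something — more carefully, a quotient of $Ae_0$ is generated in degree $0$, so if $N_0 = 0$ then $N$ contains no part of a generating set, and since $M/N$ is also a quotient of $P_0$ we get $d_{M/N}(0) = 1 = d_M(0)$, hence $d_N(0) = 0$ is consistent; then $\theta_T(N) = \sum_{i \neq 0} d_N(i) > 0 = \theta_T(M)$ unless $N = 0$. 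If instead $d_N(0) = 1$, then $M/N$ has $d_{M/N}(0) = 0$ and $d_{M/N}(i) \ge 0$; if $M/N \neq 0$ then $\theta_T(M/N) = \sum_{i\neq 0} d_{M/N}(i) > 0$, so $\theta_T(N) = -\theta_T(M/N) < 0 < \theta_T(M) = 0$, contradicting... wait, we need $\theta_T(N) \ge \theta_T(M)$ for semistability, so this shows no such proper $N$ violates it, and moreover $\theta_T(N) \neq \theta_T(M)$ for all proper nonzero $N$, giving stability.

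For part ii), genericity means every $\theta_T$-semistable module of dimension $d_T$ is in fact stable. I would deduce this from part i) together with the general fact (essentially Lemma \ref{StableLemma}-type reasoning applied to the stability here): suppose $M$ is $\theta_T$-semistable with $d_M = d_T$. The one-dimensionality $\Hom_A(P_0, M) = \mathbb{C}$ gives a canonical nonzero map $f: P_0 \to M$; let $N = \Image f$, which has $d_N(0) = 1$ and $d_N(i) \le \rk E_i$. Semistability gives $\theta_T(N) \ge 0$, i.e. $\sum_{i \neq 0}(d_N(i) - \rk E_i) \ge 0$, forcing $d_N(i) = \rk E_i$ for all $i$ and hence $N = M$, so $f$ is surjective; then part i) (converse direction) shows $M$ is $\theta_T$-stable. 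Thus no strictly semistable modules exist and $\theta_T$ is generic.

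The main obstacle I anticipate is the bookkeeping in part i)'s converse: cleanly establishing that a quotient of $P_0 = Ae_0$ is "generated in degree $0$" in the sense that any submodule $N$ with $N_0 = 0$ has, for its quotient $M/N$, the property $d_{M/N}(0) = 1$, so that the dimension-count argument closes off. This is where I'd be most careful — it relies on $P_0$ being the projective cover at vertex $0$ and on $d_T(0) = 1$ being exactly $1$ (not just positive), which is precisely the content of Assumption \ref{Assumptions}. Everything else is a direct translation between the combinatorial inequalities defining $\theta_T$-stability and the surjectivity of $P_0 \to M$; the identity $\Hom_A(P_0, M) = e_0 M = M_0$ is immediate from $P_0 = Ae_0$ and adjunction, and no derived-category input is needed beyond recognizing $P_0 = \mathbb{R}\Hom_X(T, \mathcal{O}_X)$ as this projective via Theorem \ref{BHTilting}.
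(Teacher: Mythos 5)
Your overall strategy is the same as the paper's: identify $P_0$ with $Ae_0$, so that $\Hom_A(P_0,M)=e_0M=M_0\cong\mathbb{C}^{d_T(0)}=\mathbb{C}$, and reduce $\theta_T$-stability of a dimension $d_T$ module to its being generated by the one-dimensional space at vertex $0$, i.e.\ to being a quotient of $P_0$. The forward direction of i) and your argument for ii) are correct (ii) is in fact a clean variant of the paper's direct dimension count, routed through the converse of i)).

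However, the converse direction of i) as written has a genuine logical gap in the case $d_N(0)=1$. There you compute that a proper nonzero submodule $N$ with $d_N(0)=1$ would satisfy $\theta_T(N)<0<\theta_T(M)$ and then assert that ``this shows no such proper $N$ violates it'' --- but such an $N$, if it existed, would precisely violate semistability, so this computation cannot by itself establish stability. What is needed, and what you never actually supply, is the argument that no such $N$ exists: since $d_T(0)=1$, $d_N(0)=1$ forces $N_0=M_0$, and because $M$ is a quotient of $Ae_0$ it is generated by $M_0$, whence $N\supseteq A\cdot M_0=M$, contradicting properness. You correctly flag ``generation in degree $0$'' as the delicate point, but you then locate the difficulty in the wrong place (showing $d_{M/N}(0)=1$ when $N_0=0$ is immediate from additivity of dimension vectors); the case that actually needs the generation argument is $d_N(0)=1$. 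With that one line inserted, the only proper nonzero submodules have $d_N(0)=0$ and hence $\theta_T(N)>0$, and the proof closes exactly as in the paper. A separate, inessential error: your claim that $d_{P_0}(0)=1$ because $\End_X(\mathcal{O}_X)=\mathbb{C}$ is false in general --- $\End_X(\mathcal{O}_X)=H^0(X,\mathcal{O}_X)$ is typically all of $R$ and $P_0$ is not finite dimensional; the relevant fact is $d_T(0)=\rk\mathcal{O}_X=1$ from Assumption \ref{Assumptions}, and the only thing used is $\Hom_A(P_0,M)=M_0$ for $M$ of dimension vector $d_T$.
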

\begin{proof}
The $A$-module $P_0$ is the projective module consisting of paths in the quiver starting at the vertex 0. For any representation $M$ with dimension vector $d_T$ a homomorphism from $P_0$ to $M$ is determined by the image of the trivial path $e_0 \in P_0$ in the vector space $\mathbb{C} \subset M$ at vertex $0$, which we denote by $1_0$. This is as any path $p$ starting at 0 must be sent to the evaluation in $M$ of the linear map corresponding to $p$ on the element $1_0$. Hence $\Hom_{A}(P_0,M)=\mathbb{C}$, and any nonzero element of $\Hom_{A}(P_0,M)$ is surjective precisely when the linear maps in $M$ corresponding to paths starting at 0 form a surjection from the vector space at the zero vertex onto $M$. By the definition of $\theta_T$ the module $M$ is $\theta_T$-semistable if and only if any proper submodule $N$ has $d_N(0)=0$, and this property is equivalent to the linear maps in $M$ corresponding to paths starting at 0 forming a surjection. This proves part i).

We now prove ii). It is clear by the definitions of $\theta_T$ and $d_T$ that any dimension $d_T$ module $M$ can have no proper submodules $N \subset M$ such that $\theta_T(N)=0$ as if $N$ is a nontrivial submodule, either $d_N(0)=0$ and $\theta_T(N)>0$, or $d_N(0)=1$ and $N=M$. 
\end{proof}

We now give conditions for $\eta:\mathcal{F}_X \rightarrow \mathcal{F}_A$ to be a natural isomorphism for this stability condition and dimension vector. We note that there is an abelian category $\mathcal{A}$ corresponding to the abelian category $A$-$\mod$ under the tilting equivalence between $D^b(X)$ and $D^b(A)$ such that $T$ is a projective generator of $\mathcal{A}$. Then $\mathbb{R}\Hom_{X}(T,-)$  and $ T \otimes_{A}^{\mathbb{L}} (-) $ define an equivalence of abelian categories between $\mathcal{A}$ and $A$-$\mod$. Our conditions are defined on this category $\mathcal{A}$.

\begin{Surjectivity}\label{Surjectivity} Take the dimension vector $d_T$ and  stability condition $\theta_T$ as above. Suppose the following conditions hold:
\begin{itemize}
\item[i)] The structure sheaf $\mathcal{O}_X$ is in $\mathcal{A}$, and for all closed points $x \in X$ the skyscraper sheaf $ \mathcal{O}_x$ is in  $\mathcal{A}$.
\item[ii)] For all closed points $x \in X$ there are surjections $\mathcal{O}_{X} \rightarrow \mathcal{O}_x \rightarrow 0$ in $\mathcal{A}$.
\end{itemize}
Then $\eta$ is a well defined natural transformation and $\eta_B$ is injective for all $B \in \mathfrak{R}$. Suppose further that the following condition also holds:
\begin{itemize}
\item[iii)] The set 
\begin{align*}
     S:=   \left. \left\{ \E \in \mathcal{A} \, \, \begin{array}{ | l } \bullet  \text{ $\mathbb{R}\Hom_{X}(T,\E)$ has dimension vector $d_T$.} \\ \bullet\text{ $\Hom_{\mathcal{A}}(\E,\mathcal{O}_x)= 0 $ for all closed points $x \in X$.} 
\end{array}  \right\}  \right.
\end{align*} 
is empty.
\end{itemize}
Then $\eta$ is a natural isomorphism.
\end{Surjectivity}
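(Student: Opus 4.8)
The plan is to deduce both assertions from the reduction Lemmas \ref{VitalLemma2} and \ref{VitalLemma3} (equivalently, from Theorem \ref{BigTheorem3}): it suffices to verify that $\eta_{\mathbb{C}}$ is well defined for the first assertion, and that $\eta_{\mathbb{C}}$ is moreover bijective with inverse $T\otimes^{\mathbb{L}}_A(-)$ for the second. Throughout I would use that, by construction of $\mathcal{A}$, the functors $\mathbb{R}\Hom_X(T,-)$ and $T\otimes^{\mathbb{L}}_A(-)$ restrict to mutually inverse equivalences of abelian categories between $\mathcal{A}$ and $A$-$\mod$, and that $\mathcal{F}_A(\mathbb{C})$ consists of the $\theta_T$-stable $A$-modules of dimension vector $d_T$ up to isomorphism, while $\mathcal{F}_X(\mathbb{C})$ consists of the skyscraper sheaves up to isomorphism.

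First I would check well-definedness of $\eta_{\mathbb{C}}$, i.e. that for every closed point $x\in X$ the object $M_x:=\mathbb{R}\Hom_X(T,\mathcal{O}_x)$ is a $\theta_T$-stable $A$-module of dimension vector $d_T$. By condition (i), $\mathcal{O}_x\in\mathcal{A}$, so $M_x$ lies in $A$-$\mod$, i.e. is concentrated in degree $0$; since each $E_i$ is locally free and the residue field at $x$ is $\mathbb{C}$, the dimension of $M_x$ at vertex $i$ is $\dim_{\mathbb{C}}\Hom_X(E_i,\mathcal{O}_x)=\rk E_i=d_T(i)$, so $M_x$ has dimension vector $d_T$. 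By condition (ii) there is a surjection $\mathcal{O}_X\twoheadrightarrow\mathcal{O}_x$ in $\mathcal{A}$; applying the exact equivalence $\mathbb{R}\Hom_X(T,-)$ produces a surjection $P_0\twoheadrightarrow M_x$ with $P_0=\mathbb{R}\Hom_X(T,\mathcal{O}_X)$, so $M_x$ is $\theta_T$-stable by Lemma \ref{StabilityConditionEquiv}(i). Hence $\eta_{\mathbb{C}}$ is well defined, and Lemma \ref{VitalLemma2} gives that $\eta$ is a natural transformation with all $\eta_B$ injective; in particular $\eta_{\mathbb{C}}$ is injective.

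For the second assertion I would establish surjectivity of $\eta_{\mathbb{C}}$. Let $M\in\mathcal{F}_A(\mathbb{C})$, so $M$ is a $\theta_T$-stable $A$-module of dimension vector $d_T$, and set $\mathcal{E}:=T\otimes^{\mathbb{L}}_A M$, which lies in $\mathcal{A}$ and satisfies $\mathbb{R}\Hom_X(T,\mathcal{E})\cong M$, so $\mathbb{R}\Hom_X(T,\mathcal{E})$ has dimension vector $d_T$. Because the set $S$ of condition (iii) is empty, $\mathcal{E}\notin S$, so $\mathcal{E}$ cannot satisfy the second bullet of $S$: there is a closed point $x\in X$ with $\Hom_{\mathcal{A}}(\mathcal{E},\mathcal{O}_x)\neq 0$. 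Applying the equivalence gives a nonzero $A$-module map $M\to M_x$; both modules are $\theta_T$-stable of dimension vector $d_T$ (the target by the first part), so by Lemma \ref{StableLemma}(ii) this map is an isomorphism and $M\cong M_x=\mathbb{R}\Hom_X(T,\mathcal{O}_x)$. In particular $T\otimes^{\mathbb{L}}_A(-)$ carries $\mathcal{F}_A(\mathbb{C})$ into $\mathcal{F}_X(\mathbb{C})$, so $\eta_{\mathbb{C}}$ is bijective with inverse induced by $T\otimes^{\mathbb{L}}_A(-)$, and Lemma \ref{VitalLemma3} yields that $\eta_B$ is bijective for every $B\in\mathfrak{R}$, i.e. $\eta$ is a natural isomorphism.

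\textbf{Main obstacle.} The genuinely new input is condition (iii), which is tailored precisely so that every $\theta_T$-stable module of dimension $d_T$ is hit by a skyscraper sheaf; I expect the only slightly delicate bookkeeping to be verifying that $\mathbb{R}\Hom_X(T,\mathcal{O}_x)$ is concentrated in degree $0$ with the stated dimension vector, and recognising that the nonzero comparison morphism supplied by (iii), between two $\theta_T$-stable modules of dimension $d_T$, is automatically an isomorphism by Lemma \ref{StableLemma}, which is exactly what forces surjectivity. Everything else is a formal transport of conditions (i) and (ii) across the abelian equivalence between $\mathcal{A}$ and $A$-$\mod$.
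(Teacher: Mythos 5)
Your proof is correct and follows essentially the same route as the paper: verify that $\eta_{\mathbb{C}}$ is well defined via conditions (i)--(ii) and Lemma \ref{StabilityConditionEquiv}, then use the emptiness of $S$ to produce a nonzero map from $\mathcal{E}=T\otimes^{\mathbb{L}}_A M$ to some skyscraper sheaf $\mathcal{O}_x$. The only (harmless) difference is at the final step, where you invoke Lemma \ref{StableLemma}(ii) to conclude that the induced nonzero map $M\to\mathbb{R}\Hom_X(T,\mathcal{O}_x)$ between $\theta_T$-stable modules of the same dimension vector is an isomorphism, whereas the paper first factors the surjection $\mathcal{O}_X\to\mathcal{O}_x$ through $\mathcal{E}$ to obtain a surjection $M\to\mathbb{R}\Hom_X(T,\mathcal{O}_x)$ and then compares dimension vectors.
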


\begin{proof}
We first assume that conditions i) and ii) hold and prove that $\eta_{\mathbb{C}}$ is well defined. Then it follows from Lemmas \ref{VitalLemma2} and \ref{VitalLemma3} that $\eta$ is a natural transformation and $\eta_B$ is injective for all $B \in \mathfrak{R}$.

Any element of $\mathcal{F}_{X}(\mathbb{C})$ is a skyscraper sheaf on $X$ up to isomorphism. For any closed point $x \in X$ the $A$-module $\mathbb{R}\Hom_{X}(T,\mathcal{O}_x)$ has dimension vector $d_T$, hence the map $\eta_{\mathbb{C}}$ is well defined if and only if all $\mathbb{R}\Hom_{X}(T,\mathcal{O}_x)$ are $\theta_T$-semistable $A$-modules. By condition i) they are $A$-modules. By considering the surjections of condition ii), $\mathcal{O}_{X} \rightarrow \mathcal{O}_x \rightarrow 0$ in $\mathcal{A}$, and applying the abelian equivalence $\mathbb{R}\Hom_{X}(T,-)$ we see that all $\mathbb{R}\Hom_X(T,\mathcal{O}_x)$ are $\theta_T$-stable by Lemma \ref{StabilityConditionEquiv} i). Hence $\eta_{\mathbb{C}}$ is well defined.

We now also assume that condition iii) holds and prove that $\eta_{\mathbb{C}}$ is also surjective with inverse $T \otimes_A^{\mathbb{L}}(-)$. It then follows from Theorem \ref{BigTheorem3} that $\eta$ is a natural isomorphism. Take an $A$-module, $M$, with dimension vector $d_T$ and which is $\theta_T$-stable. As $M$ is $\theta_T$-stable by Lemma \ref{StabilityConditionEquiv} ii) there is a surjection 
\begin{equation*}
P_0 \rightarrow M \rightarrow 0
\end{equation*}
which under the abelian equivalence gives an exact sequence in $\mathcal{A}$
\begin{equation*}
\mathcal{O}_{X} \rightarrow \E \rightarrow 0
\end{equation*}
where $\E \cong M \otimes_{A}^{\mathbb{L}} T \in D^b(X)$. Then by condition iii) there must be some closed point $x \in X$ such that $\Hom_{\mathcal{A}}(\E,\mathcal{O}_x) \neq 0$. We then apply $\Hom_{\mathcal{A}}(-,\mathcal{O}_x)$ to the surjection $\mathcal{O}_{X} \rightarrow \E \rightarrow 0$ to obtain an injection
\begin{equation*}
0 \rightarrow \Hom_{\mathcal{A}}(\E,\mathcal{O}_x) \rightarrow \Hom_{\mathcal{A}}(\mathcal{O}_{X},\mathcal{O}_x)=\mathbb{C}
\end{equation*}
and hence the surjection $\mathcal{O}_{X} \rightarrow \mathcal{O}_x \rightarrow 0$ factors through $\E$, and there is a surjection $\E \rightarrow \mathcal{O}_x \rightarrow 0$.  We then apply the abelian equivalence functor $\mathbb{R}\Hom_{X}(T,-)$ to obtain a surjection of finite dimensional $A$-modules
\begin{equation*}
M \rightarrow \mathbb{R} \Hom_{X}(T,\mathcal{O}_x)\rightarrow 0
\end{equation*}
and  by comparing dimension vectors we see that the map is an isomorphism, hence that $ \mathbb{R}\Hom_{X}(T,\mathcal{O}_x) \cong M$ and $T \otimes^{\mathbb{L}}_A M\cong \mathcal{O}_x$.
\end{proof}

\begin{GeneralClosedImmersion} \label{GeneralClosedImmersion} 
 Let $\pi:X \rightarrow \Spec(R)$ be a projective morphism of finite type schemes over $\mathbb{C}$. Let $T$ be a tilting bundle on $X$ which defines an equivalence of an abelian category $\mathcal{A}$ with $A$-$\mod$, where $A=\End_{X}(T)^{\op}$. Choose the stability condition $\theta_{T}$ and dimension vector $d_{T}$ as above, define $\mathcal{F}_{A}=\mathcal{F}_{A,d_T,\theta_T}^{ss}$, and assume that conditions i) and ii) of Lemma \ref{Surjectivity} hold for $\mathcal{A}$. Then:
\begin{itemize}
\item[i)] The map $\eta: \mathcal{F}_X \rightarrow \mathcal{F}_{A}$ defined in Section \ref{Natural Transformation} is a natural transformation and induces a morphism $f:X \rightarrow \mathcal{M}^{ss}_{d_{T},\theta_{T}} $ between $X$ and the quiver GIT quotient of $A$ for stability condition $\theta_{T}$ and dimension vector $d_{T}$. This morphism is a monomorphism in the sense of \cite[Definition 25.23.1 (Tag 01L2)]{Stacks Project}.
\item[ii)] If condition iii) of Lemma \ref{Surjectivity} also holds for $\mathcal{A}$ then the morphism $f$ is an isomorphism.
\end{itemize}
\end{GeneralClosedImmersion}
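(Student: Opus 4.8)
The plan is to deduce everything from Lemma~\ref{Surjectivity} together with the two representability statements already established, via the Yoneda lemma. First I would invoke Lemma~\ref{Surjectivity}: since conditions i) and ii) are assumed to hold for $\mathcal{A}$, that lemma tells us $\eta\colon\mathcal{F}_X\to\mathcal{F}_A$ is a well-defined natural transformation and that $\eta_B$ is injective for every $B\in\mathfrak{R}$. Next I would observe that both functors are representable. On the one hand $\mathcal{F}_X$ is represented by $X$ by Theorem~\ref{Representability}. On the other hand $d_T$ is indivisible (since $d_T(0)=\rk E_0=1$) and $\theta_T$ is generic by Lemma~\ref{StabilityConditionEquiv}(ii); hence every $\theta_T$-semistable $A$-module of dimension $d_T$ is $\theta_T$-stable, so $\mathcal{F}_A=\mathcal{F}^{ss}_{A,d_T,\theta_T}=\mathcal{F}^{s}_{A,d_T,\theta_T}$ and $\mathcal{M}^{ss}_{d_T,\theta_T}=\mathcal{M}^{s}_{d_T,\theta_T}$ is a fine moduli space for this functor by Theorem~\ref{sModuliGIT}.

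For part i) I would then apply Yoneda. Composing the natural isomorphism $\Hom_{\mathfrak{Sch}}(-,X)\cong\mathcal{F}_X$, the natural transformation $\eta$, and the natural isomorphism $\mathcal{F}_A\cong\Hom_{\mathfrak{Sch}}(-,\mathcal{M}^{ss}_{d_T,\theta_T})$ produces a natural transformation of functors of points, which by the Yoneda lemma is induced by a unique morphism of schemes $f\colon X\to\mathcal{M}^{ss}_{d_T,\theta_T}$. To see that $f$ is a monomorphism it suffices to check that $\Hom_{\mathfrak{Sch}}(\Spec B,X)\to\Hom_{\mathfrak{Sch}}(\Spec B,\mathcal{M}^{ss}_{d_T,\theta_T})$ is injective for all $B\in\mathfrak{R}$, since injectivity on affine test objects propagates to arbitrary test schemes by the standard observation that a morphism out of a scheme is determined by its restrictions to an affine open cover. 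Under the two representability isomorphisms this map is precisely $\eta_B$, which is injective.

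For part ii), if condition iii) of Lemma~\ref{Surjectivity} also holds then that lemma upgrades $\eta$ to a natural isomorphism; transporting across the two representability isomorphisms exactly as above, the induced natural transformation $\Hom_{\mathfrak{Sch}}(-,X)\to\Hom_{\mathfrak{Sch}}(-,\mathcal{M}^{ss}_{d_T,\theta_T})$ becomes a natural isomorphism, so $f$ is an isomorphism of schemes by Yoneda.

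There is no deep obstacle in this argument: the substance is entirely contained in Lemma~\ref{Surjectivity} and in the representability theorems, and what remains is bookkeeping with Yoneda. The one point that genuinely needs care --- and the only place the hypotheses on $d_T$ and $\theta_T$ enter --- is that one must use $\mathcal{M}^{s}_{d_T,\theta_T}=\mathcal{M}^{ss}_{d_T,\theta_T}$ as a genuine fine moduli space rather than merely as the object corepresenting $\mathcal{F}^{ss}_{A,d_T,\theta_T}$; it is this that lets one convert the componentwise injectivity (respectively bijectivity) of $\eta$ into the monomorphism (respectively isomorphism) property of $f$. Were $\theta_T$ non-generic or $d_T$ divisible, only corepresentability would be available and the passage from $\eta$ to $f$ would fail.
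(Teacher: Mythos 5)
Your proposal is correct and follows essentially the same route as the paper: Lemma \ref{Surjectivity} for the natural transformation and injectivity, genericity of $\theta_T$ and indivisibility of $d_T$ to identify $\mathcal{M}^{ss}_{d_T,\theta_T}=\mathcal{M}^{s}_{d_T,\theta_T}$ as a fine moduli space, Theorem \ref{Representability} for $X$, and Yoneda to produce $f$ and transfer injectivity (resp.\ bijectivity) of $\eta_B$ into the monomorphism (resp.\ isomorphism) property. Your extra remark about reducing the monomorphism check to affine test objects is a detail the paper leaves implicit but is correct.
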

\begin{proof}
We note that $\mathcal{M}^{ss}_{d_{T},\theta_{T}} = \mathcal{M}^{s}_{d_{T},\theta_{T}}$ as $\theta_T$ is generic by Lemma \ref{StabilityConditionEquiv} ii) and that $\mathcal{M}^{s}_{d_{T},\theta_{T}} $ is a fine moduli space for $\mathcal{F}_A$ by Theorem \ref{sModuliGIT} as the dimension vector $d_T$ is indivisible. The map $\eta: \mathcal{F}_{X} \rightarrow \mathcal{F}_A$ is a natural transformation as conditions i) and ii) of Lemma \ref{Surjectivity} hold for $\mathcal{A}$. It then follows that there is a corresponding morphism $f:X \rightarrow \mathcal{M}^{ss}_{d_{T},\theta_{T}}$  as  $\mathcal{F}_{A}$ is represented by $\mathcal{M}^{ss}_{d_{T},\theta_{T}}$ and $\mathcal{F}_X$ is represented by $X$ by Theorem \ref{Representability}. For all $B \in \mathfrak{R}$ the map $\eta_B$ is injective by Lemma \ref{Surjectivity}, hence the corresponding morphism, $f$, is a monomorphism.

If condition iii) of Lemma \ref{Surjectivity} also holds for  $\mathcal{A}$ then $\eta$ is actually a natural isomorphism by Lemma \ref{Surjectivity}. Hence $f$ is an isomorphism, proving ii).
\end{proof}

\begin{MonomorphismRemark}
While we make no further use of the monomorphism property we note that it can be a useful notion as proper monomorphisms are exactly closed immersions, \cite[Lemma 40.7.2 (Tag 04XV)]{Stacks Project}, and \'etale monomorphisms are exactly open immersions, \cite[Theorem 40.14.1 (Tag 025G)]{Stacks Project}.
\end{MonomorphismRemark}

\subsection{One Dimensional Fibres} \label{One Dimensional Fibres}
To apply Lemma \ref{Surjectivity} and Corollary \ref{GeneralClosedImmersion} we need a class of varieties with tilting bundles such that we understand the abelian categories $\mathcal{A}$. Such a class was introduced in Theorem \ref{TiltingBundles1dim}; if $\pi:X \rightarrow \Spec(R)$ is a projective morphism of Noetherian schemes such that $\mathbb{R}\pi_* \mathcal{O}_X \cong \mathcal{O}_R$ and the fibres of $\pi$ have dimension $\le 1$ then there exist tilting bundles $T_i$ on $X$ such that the abelian category $\mathcal{A}$ is $^{-i}\Per(X/R)$, defined as follows.

\begin{Perverse}[{\cite[Section 3]{3Dflops}}] \label{Perverse} Let $\pi:X \rightarrow  \Spec(R)$ be a projective morphism of Noetherian schemes such that $\mathbb{R}\pi_*\mathcal{O}_X  \cong \mathcal{O}_R$ and $\pi$ has fibres of dimension $\le 1$. Define $\mathfrak{C}$ to be the abelian subcategory of $\Coh \, X$ consisting of $\mathcal{F} \in \Coh\, X$ such that $\mathbb{R}\pi_*\mathcal{F} \cong 0$. For $i=0,1$ the abelian category $^{-i}\Per(X/R)$ is defined to contain $\E \in D^b(X)$ which satisfy the following conditions:
\begin{itemize}
\item[i)] The only non-vanishing cohomology of $\E$ lies in degrees $-1$ and $0$.
\item[ii)] $\pi_*\mathcal{H}^{-1}(\E)=0$ and  $\mathbb{R}^1 \pi_* \mathcal{H}^0(\E)=0$, where $\mathcal{H}^j$ denotes taking the $j^{th}$ cohomology sheaf.
\item[iii)] For $i=0$, $\Hom_X(C,\mathcal{H}^{-1}(\E))=0$ for all $C \in \mathfrak{C}$.
\item[iv)] For $i=1$, $\Hom_X(\mathcal{H}^0(\E),C)=0$ for all $C \in \mathfrak{C}$.
\end{itemize}
We note that the abelian categories $^{-i}\Per(X/R)$ are hearts of $t$-structures on $D^b(X)$ so short exact sequences in $^{-i}\Per(X/R)$ correspond to triangles in $D^b(X)$ whose vertices are in $^{-i}\Per(X/R)$.
\end{Perverse}
We note the following property of morphisms in $^{-i}\Per(X/R)$.
\begin{PerverseHoms} \label{PerverseHoms} Let $\pi:X \rightarrow \Spec(R)$ be a projective morphism of finite type schemes over $\mathbb{C}$, and suppose that $X$ has a tilting bundle $T$ that induces an abelian equivalence between $^0\Per(X/R)$ and $A$-$\mod$ where $A=\End_X(T)^{\op}$. Then $\Hom_{{^0\Per(X/R)}}(\E_1,\E_2)\cong \Hom_{D^b(X)}(\E_1,\E_2)$ for $\E_1, \E_2 \in {^0\Per(X/R)}$.
\end{PerverseHoms}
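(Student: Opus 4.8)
The plan is to reduce the statement to the elementary fact that morphisms in the heart of a $t$-structure agree with morphisms in the ambient triangulated category; the tilting hypothesis plays no essential role, though it gives a convenient concrete route.

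First, recall from Definition~\ref{Perverse} that $^0\Per(X/R)$ is, by its very definition, a full subcategory of $D^b(X)$, namely the heart of a $t$-structure on $D^b(X)$. Hence for $\E_1,\E_2\in{^0\Per(X/R)}$ the group $\Hom_{{^0\Per(X/R)}}(\E_1,\E_2)$ is literally $\Hom_{D^b(X)}(\E_1,\E_2)$: the $t$-structure formalism endows this full subcategory with an abelian structure without altering its morphism sets. The only genuine content of Definition~\ref{Perverse} at this point is that the resulting additive category really is abelian, which is exactly what is being invoked.

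To make this compatible with the appearance of $^0\Per(X/R)$ as the abelian category $\mathcal{A}\simeq A\text{-}\mod$ used in Lemma~\ref{Surjectivity}, I would also spell out the argument through the tilting equivalence. Write $T_\ast=\mathbb{R}\Hom_X(T,-)\colon D^b(X)\xrightarrow{\sim}D^b(A)$ for the equivalence of Theorem~\ref{BHTilting}, which by hypothesis restricts to an equivalence of abelian categories $^0\Per(X/R)\xrightarrow{\sim}A\text{-}\mod$. Full faithfulness of $T_\ast$ on $D^b(X)$, and of its restriction on the hearts, reduces the claim to the identity $\Hom_{A\text{-}\mod}(M,N)=\Hom_{D^b(A)}(M,N)$ for $M,N$ regarded as complexes concentrated in degree $0$; this is the standard fact $\Hom_{D^b(A)}(M,N)\cong\Ext^0_A(M,N)=\Hom_A(M,N)$, seen by computing $\mathbb{R}\Hom_A(M,N)$ from a projective resolution of $M$, or equivalently because $A\text{-}\mod$ is the heart of the standard $t$-structure on $D^b(A)$.

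There is no real obstacle here beyond bookkeeping. The only point deserving attention is that the comparison isomorphism obtained this way is the canonical one, i.e.\ compatible with the inclusions of $^0\Per(X/R)$ into $D^b(X)$ and of $A\text{-}\mod$ into $D^b(A)$; this is automatic since every map involved is induced either by $T_\ast$ or by such an inclusion, so the relevant square commutes. In short, the lemma is a formal consequence of the fact, recorded in Definition~\ref{Perverse}, that $^0\Per(X/R)$ is a heart of a $t$-structure on $D^b(X)$.
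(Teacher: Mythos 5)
Your proposal is correct, and your second paragraph is precisely the paper's proof: transport along the abelian equivalence to get $\Hom_{\mathcal{A}}(\E_1,\E_2)\cong\Hom_A(M_1,M_2)$, use $\Hom_A(M_1,M_2)\cong\Hom_{D^b(A)}(M_1,M_2)$ for modules in degree zero, and come back via the derived equivalence. Your first paragraph is a shorter alternative that bypasses the tilting bundle entirely, resting on the remark in Definition~\ref{Perverse} that $^0\Per(X/R)$ is the heart of a $t$-structure on $D^b(X)$ and hence a full subcategory; that is also valid, though it leans on the heart assertion (due to Van den Bergh/Bridgeland) rather than on the hypotheses the lemma actually states, which is presumably why the paper routes the argument through the equivalence with $A$-$\mod$.
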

\begin{proof}
Let $\E_1, \E_2 \in {^0\Per(X/R)}$. Then $M_i =\mathbb{R}\Hom_{X}(T,\E_i)$ is an $A$-module for $i=1,2$ and  $\Hom_{\mathcal{A}}(\E_1,\E_2)\cong \Hom_{A}(M_1, M_2) \cong \Hom_{D^b(A)}(M_1, M_2) \cong \Hom_{D^b(X)}(\E_1,\E_2)$ by the abelian and then derived equivalence. 
\end{proof}

Any projective generator of the abelian category $^{-i}\Per(X/R)$ gives a tilting bundle $T_i$ with the properties defined in Theorem \ref{TiltingBundles1dim}, and we can assume that such a tilting bundle contains $\mathcal{O}_X$ as a summand by the following proposition.

\begin{Progenerators}[{\cite[Proposition 3.2.7]{3Dflops}}] \label{Progenerators} Define $\mathfrak{V}_X$ to be the category of vector bundles $\mathcal{M}$ on $X$ which are generated by global sections and such that $\CH^1(X,\mathcal{M}^{\vee})=0$, and define $\mathfrak{V}_X^{\vee}:=\{ \mathcal{M}^{\vee} : \mathcal{M} \in \mathfrak{V}_X \}$. The projective generators of $^{-1}\Per(X/R)$ are the $\mathcal{M} \in \mathfrak{V}_X$ such that $\wedge^{\rk \mathcal{M}} \mathcal{M}$ is ample and $\mathcal{O}_X$ is a summand of $\mathcal{M}^{\oplus a}$ for some $a \in \mathbb{N}$. The projective generators of $^0 \Per(X/R)$ are the elements of $\mathfrak{V}_X^{\vee}$ which are dual to projective generators of $^{-1}\Per(X/R)$.
\end{Progenerators}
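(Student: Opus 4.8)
The plan is to work entirely inside the heart $\mathcal{A}:={}^{-1}\Per(X/R)$, characterise its projective generators abstractly, match the resulting conditions with the ones in the statement, and then obtain the ${}^{0}\Per(X/R)$ case by dualising. Recall from Definition \ref{Perverse} that $\mathcal{A}$ is the tilt of $\Coh X$ at the torsion pair whose torsion-free class is $\mathcal{F}=\{\mathcal{G}\in\Coh X:\pi_{*}\mathcal{G}=0\}$ and whose torsion class $\mathcal{T}$ consists of the sheaves with $\mathbb{R}^{1}\pi_{*}\mathcal{F}=0$ and $\Hom_{X}(\mathcal{F},C)=0$ for all $C\in\mathfrak{C}$; so every $\E\in\mathcal{A}$ sits in a short exact sequence $0\to\mathcal{H}^{-1}(\E)[1]\to\E\to\mathcal{H}^{0}(\E)\to0$ in $\mathcal{A}$ with $\mathcal{H}^{-1}(\E)\in\mathcal{F}$, $\mathcal{H}^{0}(\E)\in\mathcal{T}$, and note $\mathcal{O}_{X}\in\mathcal{T}$. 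I would first invoke the companion result of \cite{3Dflops} that $\mathcal{A}$ has a vector-bundle projective generator $\mathcal{P}_{0}$ (built from a relatively ample, globally generated line bundle by a universal extension killing the relevant $\Ext^{1}$); then any projective object of $\mathcal{A}$ is a direct summand of a power of $\mathcal{P}_{0}$ (the surjection $\mathcal{P}_{0}^{\oplus n}\twoheadrightarrow P$ splits), hence is itself a vector bundle. So every projective generator of $\mathcal{A}$ is a vector bundle lying in $\mathcal{A}$.

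Next I would unwind, for a vector bundle $\mathcal{M}$, what being a projective generator of $\mathcal{A}$ means, and identify each of the conditions in the statement with one aspect. \emph{Membership $\mathcal{M}\in\mathcal{A}$:} for a bundle this says exactly $\mathbb{R}^{1}\pi_{*}\mathcal{M}=0$ (which follows once $\mathcal{M}$ is globally generated, from $\mathcal{O}_{X}^{\oplus m}\twoheadrightarrow\mathcal{M}$) together with $\Hom_{X}(\mathcal{M},C)=0$ for all $C\in\mathfrak{C}$. \emph{Projectivity:} since $\mathcal{A}$ is the heart of a bounded t-structure on $D^{b}(X)$, this is $\Hom_{D^{b}(X)}(\mathcal{M},\E[1])=0$ for all $\E\in\mathcal{A}$; using the triangle above and the fact that $\Ext_{X}^{\ge2}(\mathcal{M},-)$ vanishes on sheaves (the fibres of $\pi$ have dimension $\le1$ and $\Spec R$ is affine, so $\CH^{i}(X,-)=0$ for $i\ge2$), this collapses to $\Ext^{1}_{X}(\mathcal{M},\mathcal{F})=\CH^{1}(X,\mathcal{M}^{\vee}\otimes\mathcal{F})=0$ for all $\mathcal{F}\in\mathcal{T}$. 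Evaluating at $\mathcal{F}=\mathcal{O}_{X}$ gives the necessary condition $\CH^{1}(X,\mathcal{M}^{\vee})=0$, and I would show, bootstrapping from $\mathcal{O}_{X}$ along a global-generation resolution and using $\mathbb{R}^{1}\pi_{*}\mathcal{F}=0$, that this single vanishing already forces the vanishing for every $\mathcal{F}\in\mathcal{T}$; together with membership this is exactly $\mathcal{M}\in\mathfrak{V}_{X}$. \emph{Generation:} a projective object generates iff it surjects onto every simple object of $\mathcal{A}$; these are the skyscrapers $\mathcal{O}_{x}$ at points off the exceptional locus together with finitely many objects supported on the positive-dimensional fibres. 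Surjecting onto all the $\mathcal{O}_{x}$ is the statement that suitable twists of $\mathcal{M}$ separate points of $X$ over $\Spec R$, which one identifies with ampleness of $\det\mathcal{M}=\wedge^{\rk\mathcal{M}}\mathcal{M}$ (``$\pi$-ample'' and ``ample'' agree over the affine base); surjecting onto the remaining, exceptional, simples additionally requires the trivial bundle to be available as a building block, i.e.\ $\mathcal{O}_{X}\in\mathrm{add}(\mathcal{M})$, equivalently $\mathcal{O}_{X}$ is a summand of $\mathcal{M}^{\oplus a}$ for some $a$. One then checks these conditions are also sufficient.

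For ${}^{0}\Per(X/R)$ I would argue by duality at the level of bundles: the exact, involutive functor $(-)^{\vee}=\CalHom_{X}(-,\mathcal{O}_{X})$ interchanges the defining conditions of the two hearts. For a bundle $\mathcal{N}$, lying in ${}^{0}\Per(X/R)$ is just the condition $\mathbb{R}^{1}\pi_{*}\mathcal{N}=0$, which holds for $\mathcal{N}=\mathcal{M}^{\vee}$ precisely when $\CH^{1}(X,\mathcal{M}^{\vee})=0$, i.e.\ when $\mathcal{M}\in\mathfrak{V}_{X}$; and the projectivity and generation computations for $\mathcal{M}^{\vee}$ in ${}^{0}\Per(X/R)$ run exactly as above with the roles of $\mathcal{M}$ and $\mathcal{M}^{\vee}$ exchanged, again using that $\mathcal{M}$ is globally generated with $\CH^{1}(X,\mathcal{M}^{\vee})=0$ and that $\Ext^{\ge2}$ vanishes. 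Hence $\mathcal{M}\mapsto\mathcal{M}^{\vee}$ gives a bijection between projective generators of ${}^{-1}\Per(X/R)$ and of ${}^{0}\Per(X/R)$, which is the final assertion, and $\mathfrak{V}_{X}^{\vee}$ is by definition the set of all such duals.

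The step I expect to be the main obstacle is the generation half: pinning the categorical notion of ``generates'' down to the precise pair ``$\wedge^{\rk\mathcal{M}}\mathcal{M}$ ample and $\mathcal{O}_{X}$ a summand of $\mathcal{M}^{\oplus a}$'' in both directions. Necessity requires showing that a non-ample determinant, or a missing trivial summand, genuinely obstructs a surjection onto some simple object of $\mathcal{A}$; sufficiency requires producing, from ampleness and the $\mathcal{O}_{X}$-summand, surjections in $\mathcal{A}$ onto arbitrary objects, which forces one to understand the structure of $\mathcal{A}$ near the positive-dimensional fibres (the non-skyscraper simples). Secondary difficulties are the bootstrapping in the projectivity argument from $\mathcal{F}=\mathcal{O}_{X}$ to general $\mathcal{F}\in\mathcal{T}$, and untangling precisely which of the conditions ($\mathcal{M}$ globally generated, $\CH^{1}(X,\mathcal{M}^{\vee})=0$, $\det\mathcal{M}$ ample, $\mathcal{O}_{X}\mid\mathcal{M}^{\oplus a}$) is responsible for membership in $\mathcal{A}$ versus projectivity versus generation.
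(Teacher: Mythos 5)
First, a point of orientation: the paper does not prove this statement at all --- it is imported verbatim as \cite[Proposition 3.2.7]{3Dflops} and used as a black box (e.g.\ in the proof of Theorem \ref{BijectiveConditions}), so there is no in-paper argument to compare yours against. What follows therefore assesses your sketch on its own terms.

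Your decomposition into membership, projectivity and generation is the right one, and the use of $\Ext^{\ge 2}_X(\mathcal{M},-)=0$ (fibres of dimension $\le 1$ over an affine base) to collapse projectivity to an $\Ext^1$ condition is correct. But as written this is a roadmap rather than a proof, and the places you defer are exactly where the content lies. (1) The bootstrap from $\CH^1(X,\mathcal{M}^{\vee})=0$ to $\Ext^1_X(\mathcal{M},\mathcal{F})=0$ for every $\mathcal{F}$ in the torsion class is only asserted; it is a genuine lemma in \cite{3Dflops}, resting on the exact sequence $0\to\mathcal{O}_X^{\oplus r-1}\to\mathcal{M}\to\wedge^{r}\mathcal{M}\to 0$ for globally generated bundles of rank $r$ on such $X$ (the same sequence the paper quotes as \cite[Lemma 3.5.1]{3Dflops}), and tensoring short exact sequences of non-flat sheaves naively does not work. (2) The generation step is misattributed. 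For any vector bundle $\mathcal{M}$ and any closed point $x$ one has $\Hom_{\mathcal{A}}(\mathcal{M},\mathcal{O}_x)=\Hom_X(\mathcal{M},\mathcal{O}_x)\neq 0$, and a nonzero map from a projective object to a simple object is automatically surjective; so hitting the skyscrapers away from the exceptional locus is free and cannot be what ampleness of $\wedge^{\rk \mathcal{M}}\mathcal{M}$ buys. The ampleness is needed to hit the simple objects supported on the contracted curves: for ${}^{-1}\Per(X/R)$ these include objects of the form $\mathcal{O}_{C}(-1)[1]$, and $\Hom_{\mathcal{A}}(\mathcal{M},\mathcal{O}_{C}(-1)[1])\cong \CH^1(C,\mathcal{M}^{\vee}|_{C}(-1))$ is nonzero precisely when $\mathcal{M}|_{C}$ has a summand of positive degree, which for globally generated $\mathcal{M}$ is ampleness of the determinant; the $\mathcal{O}_X$-summand condition detects the remaining fibre simple. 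Your argument would need to exhibit this list of simples and justify that a projective object of $\mathcal{A}\simeq A$-$\mod$ generates iff it maps onto each of them. (3) The claim that the ${}^{0}\Per(X/R)$ case ``runs exactly as above with the roles exchanged'' under $(-)^{\vee}$ needs an argument: $(-)^{\vee}$ is not exact on $\Coh X$ and does not visibly exchange the two torsion pairs; the clean route is the derived duality $\mathbb{R}\CalHom_X(-,\mathcal{O}_X)$, which interchanges the two perverse hearts up to shift. None of these gaps is fatal to the strategy, but each is a substantive missing step rather than a routine verification.
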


Hence we let $T_i$ be a projective generator of $^i\Per(X/R)$ with a decomposition as required in Assumption \ref{Assumptions}. Then the algebra $A_i=\End_{X}(T_i)^{\op}$ can be presented as a quiver with relations with vertex 0 corresponding to $\mathcal{O}_{X}$ and the stability condition $\theta_{T_i}$ and dimension vector $d_{T_i}$ are well defined.

We now check that the conditions of Lemma \ref{Surjectivity} hold for $^0\Per(X/R)$.

\begin{BijectiveConditions} \label{BijectiveConditions} Let $\pi:X \rightarrow \Spec(R)$ be a projective morphism of finite type schemes over $\mathbb{C}$ such that $\pi$ has fibres of dimension $\le 1$ and $\mathbb{R}\pi_* \mathcal{O}_{X} \cong \mathcal{O}_{R}$. Then the abelian category ${^0\Per(X/R)}$ satisfies conditions i), ii) and iii) of Lemma \ref{Surjectivity}.
\end{BijectiveConditions}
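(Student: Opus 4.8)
The plan is to check conditions i), ii) and iii) of Lemma~\ref{Surjectivity} in turn for $\mathcal{A}={^0\Per(X/R)}$ and $T=T_0$ the projective generator of ${^0\Per(X/R)}$ containing $\mathcal{O}_X$ as a summand; the first two are immediate and the third carries the content.

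First I would verify conditions i) and ii) by checking the defining properties of Definition~\ref{Perverse} directly. The sheaves $\mathcal{O}_X$, $\mathcal{O}_x$ and the ideal sheaf $\mathcal{I}_x$ are all concentrated in degree $0$, so their $(-1)$st cohomology vanishes and the two perverse conditions involving $\mathcal{H}^{-1}$ hold vacuously; it remains to see that $\mathbb{R}^1\pi_*$ of each vanishes. For $\mathcal{O}_X$ this is part of the hypothesis $\mathbb{R}\pi_*\mathcal{O}_X\cong\mathcal{O}_R$; for $\mathcal{O}_x$ it holds since $\mathcal{O}_x$ is supported at a point; and for $\mathcal{I}_x$ it follows by applying $\mathbb{R}\pi_*$ to $0\to\mathcal{I}_x\to\mathcal{O}_X\to\mathcal{O}_x\to0$, using $\mathbb{R}^1\pi_*\mathcal{O}_X=0$ and the surjectivity of $\pi_*\mathcal{O}_X\to\pi_*\mathcal{O}_x$. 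Hence $\mathcal{O}_X,\mathcal{O}_x,\mathcal{I}_x\in{^0\Per(X/R)}$, which is condition i); and since ${^0\Per(X/R)}$ is the heart of a $t$-structure, the triangle $\mathcal{I}_x\to\mathcal{O}_X\to\mathcal{O}_x\to\mathcal{I}_x[1]$, all of whose vertices lie in the heart, is a short exact sequence there, so $\mathcal{O}_X\twoheadrightarrow\mathcal{O}_x$ is the surjection required in ii).

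For condition iii), take $\E\in{^0\Per(X/R)}$ with $M:=\mathbb{R}\Hom_X(T_0,\E)$ of dimension vector $d_{T_0}$; then $M$ is a finite-dimensional $A_0$-module concentrated in degree $0$ with $\dim_{\mathbb{C}}\Hom_X(\mathcal{O}_X,\E)=d_{T_0}(0)=1$ by Assumption~\ref{Assumptions}, and I must produce a closed point $x$ with $\Hom_{{^0\Per(X/R)}}(\E,\mathcal{O}_x)\neq0$. By Lemma~\ref{PerverseHoms} it is enough to find $x$ with $\Hom_{D^b(X)}(\E,\mathcal{O}_x)\neq0$. The first move is to reduce to the case of a shifted sheaf: from the truncation triangle $\mathcal{H}^{-1}(\E)[1]\to\E\to\mathcal{H}^{0}(\E)\to\mathcal{H}^{-1}(\E)[2]$ and the vanishing $\Ext^{-2}_X(\mathcal{H}^{-1}(\E),\mathcal{O}_x)=0$ one obtains an inclusion $\Hom_X(\mathcal{H}^{0}(\E),\mathcal{O}_x)\hookrightarrow\Hom_{D^b(X)}(\E,\mathcal{O}_x)$, and every nonzero coherent sheaf admits a nonzero map onto some skyscraper, so we are done unless $\mathcal{H}^{0}(\E)=0$. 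Thus I may assume $\E=\mathcal{G}[1]$ for a coherent sheaf $\mathcal{G}:=\mathcal{H}^{-1}(\E)$, so that $\mathbb{R}\Hom_X(T_0,\mathcal{G})=M[-1]$ is concentrated in degree $1$; in particular $\mathcal{G}\neq0$, $\Hom_X(T_0,\mathcal{G})=0$, $\dim_{\mathbb{C}}\Ext^{1}_X(T_0,\mathcal{G})=\dim_{\mathbb{C}}M=\rk T_0$, $H^{0}(X,\mathcal{G})=0$ and $\dim_{\mathbb{C}}H^{1}(X,\mathcal{G})=1$.

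The remaining step — which I expect to be the main obstacle — is to rule this out. Since $\pi$ has fibres of dimension $\le1$ we have $\mathbb{R}^{i}\pi_*=0$ for $i\ge2$, and the perverse condition $\pi_*\mathcal{G}=0$ gives $\mathbb{R}\pi_*\mathcal{G}\cong(\mathbb{R}^{1}\pi_*\mathcal{G})[-1]$; as $\Spec R$ is affine, $H^{1}(X,\mathcal{G})\cong\Gamma(\Spec R,\mathbb{R}^{1}\pi_*\mathcal{G})$ is one-dimensional, so $\mathbb{R}^{1}\pi_*\mathcal{G}$ is a length-one skyscraper at a closed point $p$ and $\mathcal{G}$ is supported on the fibre $Z:=\pi^{-1}(p)$, which is necessarily one-dimensional (a zero-dimensional fibre would force $\mathbb{R}^{1}\pi_*\mathcal{G}=0$). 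Now I would invoke the positivity of $T_0$ from Proposition~\ref{Progenerators}: writing $T_0=\mathcal{O}_X\oplus T_0'$, the dual $T_0^{\vee}$ is globally generated and the line bundle $\mathcal{L}:=\wedge^{\rk T_0}T_0^{\vee}$ is ample, so $\mathcal{L}|_Z$ is ample on the proper curve $Z$. From $\mathbb{R}\Hom_X(T_0,\mathcal{G})=M[-1]$ one reads off $\chi(\mathcal{O}_X,\mathcal{G})=-1$ and $\chi(T_0,\mathcal{G})=-\rk T_0$; and using that $\E$ and $\mathcal{O}_x$ for $x\in Z$ carry the same class in $K_0(D^b(X))$ (both have composition series built from the vertex-$0$ simple $S_0$ and the remaining simple objects of ${^0\Per(X/R)}$ supported on $Z$, whose dimension vectors are linearly independent) one obtains $[\mathcal{G}]=-[\mathcal{O}_x]$ in $K_0(D^b(X))$, hence $\chi(\mathcal{L}^{\otimes n},\mathcal{G})=-\chi(\mathcal{L}^{\otimes n},\mathcal{O}_x)=-1$ for every $n\ge0$. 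But $\chi(\mathcal{L}^{\otimes n},\mathcal{G})=\chi\big(Z,(\mathcal{L}^{\vee})^{\otimes n}|_Z\otimes\mathcal{G}\big)$ tends to $-\infty$ as $n\to\infty$, because $\mathcal{L}|_Z$ is ample and $\mathcal{G}$ has one-dimensional support — a contradiction, so no such $\E$ exists. I expect the delicate points of this last step to be the $K_0$-identification and the bookkeeping when $Z$ is reducible or non-reduced — equivalently, showing that the perverse conditions together with the prescribed dimension vector force $\mathcal{G}$ to be too negative on $Z$ to survive the ample twist built into $T_0$.
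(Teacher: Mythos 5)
Your verification of conditions i) and ii), and your reduction of condition iii) to the case $\E=\mathcal{G}[1]$ with $\mathcal{G}=\mathcal{H}^{-1}(\E)$, match the paper's proof essentially step for step. The endgame, however, diverges: you run a BKR-style Euler-pairing argument (which the paper explicitly names as its inspiration but deliberately does not use in that form), and as written it has two gaps. The smaller one: from $\pi_*\mathcal{G}=0$ and $\mathbb{R}^1\pi_*\mathcal{G}\cong\mathcal{O}_p$ you conclude that $\mathcal{G}$ is supported on $\pi^{-1}(p)$, but this implication is false for a general sheaf --- e.g.\ $\mathcal{O}_C(-1)$ on a rational curve $C$ in a fibre over some other point $q$ has vanishing derived pushforward, so $\mathcal{G}$ could a priori carry such pieces without disturbing $\mathbb{R}\pi_*\mathcal{G}$. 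You need either condition iii) of Definition \ref{Perverse} (such a piece would be an object of $\mathfrak{C}$ admitting a nonzero map to $\mathcal{H}^{-1}(\E)$) or, as the paper does, the injection $\mathcal{H}^{-1}(\E)\hookrightarrow\mathcal{H}^{-1}(\pi^!\mathcal{O}_p)$ of \cite[Lemma 3.1.3]{3Dflops}.

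The substantive gap is the $K_0$ identification $[\E]=[\mathcal{O}_x]$, which you correctly flag as the delicate point but do not prove. It requires (a) that $\E$ and $\mathcal{O}_x$ are finite-length objects of ${^0\Per(X/R)}$ whose composition factors are the perverse simples supported over $p$, and (b) that the dimension vectors of those simples, taken with respect to the \emph{global} quiver presentation of $A_0$, are linearly independent. Point (b) is genuinely nontrivial: the simples over $p$ correspond to the indecomposable summands of $T_0$ restricted to the formal fibre, and a global summand $E_i$ attached to a curve over a different point becomes free there, so the ``decomposition matrix'' relating local simples to global vertices is not the identity and its injectivity needs an argument. The paper sidesteps $K_0$ and Riemann--Roch entirely: it passes to the formal fibre, uses \cite[Lemma 3.5.1]{3Dflops} to produce the exact sequence $0\rightarrow\mathcal{L}^{\otimes -N}\rightarrow (j^*T_0)^{\oplus N}\rightarrow\mathcal{O}_Y^{\oplus Nd-1}\rightarrow 0$, kills $\Hom(\mathcal{L}^{\otimes -N},\mathcal{G}[1])$ by Serre vanishing, and reads off a surjection from an $(Nd-1)$-dimensional space onto an $Nd$-dimensional one --- in effect computing $\chi(\mathcal{L}^{\otimes -N},\E)=N\chi(T_0,\E)-(Nd-1)\chi(\mathcal{O}_X,\E)=1$ directly from the two Euler characteristics you already have, with no appeal to composition series. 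If you replace your $K_0$ step by this exact sequence (or prove the linear independence in (b), which does hold in Van den Bergh's setting), your argument closes; the final asymptotic Riemann--Roch contradiction is then equivalent to the paper's dimension count.
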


\begin{proof}
We begin by checking $\mathcal{A}$ satisfies conditions i) and ii) of Lemma \ref{Surjectivity}. All skyscraper sheaves $\mathcal{O}_x$ and the structure sheaf $\mathcal{O}_{X}$ are in $\mathcal{A}$ as they satisfy the conditions of Definition \ref{Perverse}. Then, for any $x \in X$, the short exact sequence of sheaves $0 \rightarrow I \rightarrow \mathcal{O}_X \rightarrow \mathcal{O}_x \rightarrow 0$ corresponds to a triangle in $D^b(X)$, and the ideal sheaf $I$ is also in $\mathcal{A}$ as $\mathbb{R}^1\pi_* I = 0$ due to the exact sequence $0 \rightarrow \pi_*I \rightarrow \pi_* \mathcal{O}_X \rightarrow \pi_* \mathcal{O}_x \rightarrow \mathbb{R}^1\pi_*I \rightarrow 0$ where $\pi_*\mathcal{O}_X \cong \mathcal{O}_R$ and the third arrow is a surjection. Hence the map  $\mathcal{O}_X \rightarrow \mathcal{O}_x \rightarrow 0$ is in fact a surjection in $\mathcal{A}$. We then note, for all $x \in X$, that $\Hom_{\mathcal{A}}(\mathcal{O}_{X},\mathcal{O}_x) \cong \Hom_{D^b(X)}(\mathcal{O}_X,\mathcal{O}_x) \cong \Hom_X(\mathcal{O}_X,\mathcal{O}_x) \cong \mathbb{C}$, hence $\Hom_{\mathcal{A}}(\mathcal{O}_{X},\mathcal{O}_x)\cong \mathbb{C}$ corresponding to the map of sheaves $\mathcal{O}_{X} \rightarrow \mathcal{O}_x \rightarrow 0$ which is surjective in $\mathcal{A}$.

To check condition iii)  suppose $S$ is not empty and so there exists some $\E \in S$. In particular,  $M \cong \Hom_{D^b(X)}(T_0,\E)$ has dimension vector $d_{T_0}$ so $\mathbb{R}\pi_*\E \cong \mathcal{O}_y$ for some $y \in \Spec(R)$. As $\E \in \mathcal{A}$ there is a short exact sequence in $\mathcal{A}$
\[
0 \rightarrow \mathcal{H}^{-1}(E)[1] \rightarrow \E \rightarrow \mathcal{H}^0(\E) \rightarrow 0
\]
where $[1]$ is the shift in $D^b(X)$. Hence, for all closed points $x \in X$, there is an injection 
\[
0 \rightarrow \Hom_{\mathcal{A}}(\mathcal{H}^0(\E),\mathcal{O}_x) \rightarrow \Hom_{\mathcal{A}}(\E,\mathcal{O}_x).
\]
Then it follows that $\Hom_{\mathcal{A}}(\mathcal{H}^0(\E),\mathcal{O}_x)=\Hom_{D^b(X)}(\mathcal{H}^0(\E),\mathcal{O}_x)=0$ for all $x \in X$ as by assumption $\Hom_{\mathcal{A}}(\E,\mathcal{O}_x)=0$, and hence $\mathcal{H}^0(\E)=0$ as a nonzero coherent sheaf must be supported somewhere. So $\E=\mathcal{H}^{-1}(\E)[1]$, and we now seek to reach a contradiction to the existence of such an $\E$. The argument below should be thought of as an explicit translation to our setting of the proof of Nakamura's conjecture for the $G$-Hilbert scheme in \cite[Section 8]{McKayBKR} which derives a contradiction between the facts that the Euler pairing of a coherent sheaf shifted by [1] with a very ample line bundle must be negative whereas the Euler pairing of a $G$-cluster with any locally free sheaf must be positive. 

We begin by noting in particular that $\pi_*\mathcal{H}^{-1}(\E)=0$ and $\mathbb{R}^1 \pi_*\mathcal{H}^{-1}(\E)=\mathcal{O}_y$. By \cite[Lemma 3.1.3]{3Dflops} there is an injection of sheaves
\[
0 \rightarrow \mathcal{H}^{-1}(\E) \rightarrow \mathcal{H}^{-1}(\pi^{!}\mathcal{O}_y)
\]
and hence $\mathcal{H}^{-1}(\E)$ is set-theoretically supported on $\pi^{-1}(y)$. In particular $y$ corresponds to a maximal ideal $\frak{m}_y$ of $R$ and we consider the completion $R \rightarrow \hat{R}=\varprojlim ( R/\frak{m}_y^n )$. This produces the following pullback diagram 
\begin{align*}
\begin{tikzpicture} [bend angle=0]
\node (C3) at (0,1.6)  {$\Spec(\hat{R})$};
\node (C4) at (2,1.6)  {$\Spec(R)$};
\node (C5) at (0,3.2)  {$Y$};
\node (C6) at (2,3.2)  {$X$};
\draw [->] (C3) to node[above]  {\scriptsize{$i$}} (C4);
\draw [->] (C5) to node[above]  {\scriptsize{$j$}} (C6);
\draw [->] (C6) to node[right]  {\scriptsize{$\pi$}} (C4);
\draw [->] (C5) to node[left]  {\scriptsize{$\hat{\pi}$}} (C3);
\end{tikzpicture}
\end{align*}
where $Y$ is the formal fibre  $Y:=\varprojlim (\Spec(R/\frak{m}_y^n) \times_{\Spec(R)} X)$, the morphisms $i$ and $j$ are both flat and affine, and the morphism $\hat{\pi}$ is projective. Then we have the following isomorphism, where we recall that the morphisms $i$ and $j$ are both flat and affine so we need not derive them,
\begin{align*}
\mathbb{R}\Hom_X(T_0,j_*j^*\E)  &\cong i_*\mathbb{R}\Hom_{Y}(j^*T_0,j^*\E) \tag{$j_*,j^*$ adjoint pair} \\
&   \cong i_*\mathbb{R}\hat{\pi}_{*} j^* \RCalHom_{X}(T_0,\E) \tag{Lemma \ref{DualPullback}} \\ 
& \cong i_* i^* \mathbb{R}\Hom_{X}(T_0,\E). \tag{Flat base change}
\end{align*}
Then as $M\cong \mathbb{R}\Hom_{X}(T_0,\E)$ is finite dimensional and supported on $\mathfrak{m}_y$ it follows that completion in $\mathfrak{m}_y$ followed by restriction of scalars acts as the identity, see \cite[Theorem 2.13]{Eisenbud} and \cite[Lemma 2.5]{CMReps}, hence $ i_*i^*M := \hat{R} \otimes_R M \cong M$. We deduce that  $\mathbb{R}\Hom_X(T_0,j_*j^*\E) \cong \mathbb{R}\Hom_{X}(T_0,\E)$, and so $\E \cong j_*j^*\E$ as $T_0$ is a tilting bundle. Finally we can define $\mathcal{G}:= j^* \mathcal{H}^{-1}(\E)$ with the property that $j_* \mathcal{G}[1] \cong \E $.

We now note that by Lemma \ref{Progenerators} there exists $P \in \mathfrak{V}_X$ such that $T_0=P^{\vee}$.  We then note that as $P$ is a vector bundle generated by global sections so is $j^* P$, hence as $\hat{R}$ is a complete local ring there exists a short exact sequence
\[
0 \rightarrow \mathcal{O}_{Y}^{\oplus d-1} \rightarrow j^* P \rightarrow \wedge^d  j^*P \rightarrow 0
\]
by \cite[Lemma 3.5.1]{3Dflops}, where $d=\rk P=\rk  j^* P$. Also, as $P \in \mathfrak{V}_X$, the line bundle $\wedge^d P$ is ample and so the line bundle $\mathcal{L}:=\wedge^d j^* P \cong j^* \wedge^d P$ is also ample as $j$ is affine. Then by Serre vanishing, \cite[\textrm{III} Theorem 5.2]{HartshorneAG}, there exists some $N>0$ such that $\Hom_{D^b(Y)}(\mathcal{L}^{\otimes - N},\mathcal{G}[1]) \cong \Ext^1_Y(\mathcal{O}_Y,\mathcal{L}^{\otimes N} \otimes \mathcal{G})=0$. As $j^*P$ is generated by global sections the vector bundle $j^*P^{\oplus N}$ is also generated by global sections so again there exists a short exact sequence
\[
0 \rightarrow \mathcal{O}_Y^{\oplus Nd-1} \rightarrow \left(j^*P \right)^{\oplus N} \rightarrow \mathcal{L}^{\otimes N} \rightarrow 0.
\]
by \cite[Lemma 3.5.1]{3Dflops}. Dualising this we obtain the short exact sequence
\[
0 \rightarrow \mathcal{L}^{\otimes -N} \rightarrow \left(j^*T_0 \right)^{\oplus N} \rightarrow \mathcal{O}_Y^{\oplus Nd-1} \rightarrow 0,
\]
where $(j^* P)^{\vee} \cong j^* (P^{\vee})$ by Lemma \ref{DualPullback}. As $\Hom_{D^b(Y)}(\mathcal{L}^{\otimes - N},\mathcal{G}[1])=0$ applying $\Hom_{D^b(Y)}(-,\mathcal{G}[1])$ to this sequence produces an exact sequence
\begin{equation} \tag{$\dagger$} \label{Sequence}
\Hom_{D^b(Y)}(\mathcal{O}_Y,\mathcal{G}[1])^{\oplus Nd-1} \rightarrow \Hom_{D^b(Y)}(j^*T_0,\mathcal{G}[1])^{\oplus N} \rightarrow 0.
\end{equation}
Then 
\begin{align*}
\dim_{\mathbb{C}} \Hom_{D^b(Y)}(\mathcal{O}_Y,\mathcal{G}[1]) 
&= \dim_{\mathbb{C}}\Hom_{D^b(Y)}(\mathbb{L}j^*\mathcal{O}_X,\mathcal{G}[1]) \tag{$\mathcal{O}_Y \cong \mathbb{L}j^*\mathcal{O}_X$} \\
&= \dim_{\mathbb{C}}\Hom_{D^b(X)}(\mathcal{O}_X, \mathbb{R}j_{*}\mathcal{G}[1]) \tag{$\mathbb{L}j^*,\mathbb{R}j_{*}$  adjoint pair} \\
&= \dim_{\mathbb{C}}\Hom_{D^b(X)}(\mathcal{O}_X, j_{*}\mathcal{G}[1]) \tag{As $j$ affine $\mathbb{R}j_*=j_*$} \\
&= \dim_{\mathbb{C}}\Hom_{D^b(X)}(\mathcal{O}_X,\E) 
=1 
\intertext{and}
 \dim_{\mathbb{C}} \Hom_{D^b(Y)}(j^*T_0,\mathcal{G}[1])   &= \dim_{\mathbb{C}} \Hom_{D^b(Y)}(\mathbb{L}j^*T_0,\mathcal{G}[1]) \tag{$T_0$ locally free} \\
 &= \dim_{\mathbb{C}} \Hom_{D^b(X)}(T_0,\mathbb{R}j_{*}\mathcal{G}[1]) \tag{$\mathbb{L}j^*,\mathbb{R}j_{*}$  adjoint pair} \\
 &= \dim_{\mathbb{C}} \Hom_{D^b(X)}(T_0,j_{*}\mathcal{G}[1]) \tag{As $j$ affine $\mathbb{R}j_*=j_*$} \\
& = \dim_{\mathbb{C}} \Hom_{D^b(X)}(T_0,\E)\\
&=  \dim_{\mathbb{C}} M  =d
\end{align*}
as $M \cong \Hom_{D^b(X)}(T_0,\E)$ has dimension vector $d_{T_0}$ and $d=\rk T_0$. Comparing the dimensions in the sequence \eqref{Sequence} we find a contradiction since a $Nd-1$ dimensional space cannot surject onto an $Nd$ dimensional space. Hence such an $\E$ cannot exist and so $S$ is empty.
\end{proof}

Combining this theorem with Corollary \ref{GeneralClosedImmersion} gives us the following result, showing that in this situation schemes can be reconstructed as fine moduli spaces by quiver GIT.

\begin{BigTheorem1} \label{BigTheorem1} 
 Let $\pi:X \rightarrow \Spec(R)$ be a projective morphism of finite type schemes over $\mathbb{C}$ such that $\pi$ has fibres of dimension $\le 1$ and $\mathbb{R}\pi_* \mathcal{O}_{X} \cong \mathcal{O}_{R}$. Let $T_0$ be a tilting bundle which is a projective generator of $^0\Per(X/R)$ as defined by Theorem \ref{TiltingBundles1dim}, define $A_0=\End_{X}(T_0)^{\op}$, and choose the stability condition $\theta_{T_0}$ and dimension vector $d_{T_0}$ as above. Then $X$ is the fine moduli space of the quiver representation moduli functor for $A_0=\End_{X}(T_0)^{\op}$ with dimension vector $d_{T_0}$ and stability condition $\theta_{T_0}$ and the tautological bundle is the tilting bundle $T_0^{\vee}$.
\end{BigTheorem1}

\subsection{Example: Flops} \label{Flops}
The class of varieties considered in Section \ref{One Dimensional Fibres} were originally motivated by flops in the minimal model program.  In the paper \cite{FlopsDerivedCategories} Bridgeland proves that smooth varieties in dimension three which are related by a flop are derived equivalent, and in the process constructs the flop of such a variety as a moduli space of perverse point sheaves. In this section we show that this moduli space construction can in fact be done using quiver GIT. Recall the following theorem.

\begin{VanDenBerghFlops}[{\cite[Theorems 4.4.1, 4.4.2]{3Dflops}}] \label{VanDenBerghFlops} Suppose $\pi:X\rightarrow \Spec(R)$ is a projective birational map of quasiprojective Gorenstein varieties of dimension $\ge 3$, with $\pi$ having fibres of dimension $\le 1$, the exceptional locus of $\pi$ having codimension $\ge 2$, and $Y$ having canonical hypersurface singularities of multiplicity $\le 2$. Then the flop $\pi': X'\rightarrow \Spec(R)$ exists and is unique. Further $X$ and $X'$ are derived equivalent such that $^{-1}\Per(X/R)$ corresponds to $^{0}\Per(X'/R)$.  In particular, for a tilting bundle $T_1$ on $X$ which is a projective generator of $^{-1}\Per(X/R)$ there is a tilting bundle $T_0'$ on $X'$ which is a projective generator of  $^{0}\Per(X'/R)$ such that $A_1=\End_{X}(T_1)^{\op} \cong \End_{X'}(T_0')^{\op}=A_0'$ and $\pi_* T_1 \cong \pi'_* T_0'$.
\end{VanDenBerghFlops}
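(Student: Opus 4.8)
This is a theorem of Van den Bergh, so the plan is to recall the shape of his argument rather than to reprove it from scratch. The starting point is the tilting bundle already supplied by Theorem \ref{TiltingBundles1dim}: by Proposition \ref{Progenerators} we may take $T_1$ to be a projective generator of $^{-1}\Per(X/R)$ having $\mathcal{O}_X$ as a summand, and set $A_1 = \End_X(T_1)^{\op}$. By Theorem \ref{BHTilting} this is a module-finite $R$-algebra, $\mathbb{R}\Hom_X(T_1,-)$ is a derived equivalence $D^b(X) \to D^b(A_1)$, and it restricts to an abelian equivalence $^{-1}\Per(X/R) \to A_1$-$\mod$. Writing $M := \pi_* T_1$, a reflexive $R$-module with $R$ as a summand, one has $A_1 \cong \End_R(M)$; the first substantive step is to use the Gorenstein hypothesis, the smallness (hence crepancy) of $\pi$ guaranteed by the exceptional locus having codimension $\ge 2$, and the one-dimensional fibre condition to show that $A_1$ is a noncommutative crepant resolution of $R$ in the sense of Van den Bergh --- maximal Cohen--Macaulay over $R$ and homologically homogeneous, in particular of finite global dimension. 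This symmetry of $A_1$ is what permits a second geometric resolution to appear.

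The heart of the proof is the construction of $X'$. I would reduce to the complete local situation at a singular point $y$ of $\Spec R$, as in the completion argument used in the proof of Theorem \ref{BijectiveConditions}; there $R$ becomes a complete local Gorenstein hypersurface ring, the multiplicity-$\le 2$ hypothesis forces it to be compound Du Val, and the fibre of $\pi$ over $y$ becomes a configuration of finitely many rational curves whose possibilities are classified. In this local model one constructs $X'$ directly from $A_1$ --- either as $\Proj$ of a suitable $\mathbb{N}$-graded Rees-type algebra, or, in the moduli-theoretic language of this paper, as the fine moduli space $\mathcal{M}^{ss}_{d,\theta'}$ of $\theta'$-stable $A_1$-modules for a stability condition $\theta'$ in the chamber adjacent to the one for which $X \cong \mathcal{M}^{ss}_{d,\theta}$. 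One then checks: (a) the tautological object on $X'$ is a tilting bundle $T_0'$, so $D^b(X') \cong D^b(A_1)$; (b) the flop-specific phenomenon that the relevant mutation of $M$ is isomorphic to $M$, giving $\End_{X'}(T_0')^{\op} \cong A_1 = \End_R(M)$; (c) the composite equivalence $D^b(X') \cong D^b(A_1) \cong D^b(X)$ carries $A_1$-$\mod$ onto $^0\Per(X'/R)$, so $T_0'$ is a projective generator of $^0\Per(X'/R)$ and $^{-1}\Per(X/R)$ corresponds to $^0\Per(X'/R)$; and (d) $\pi' \colon X' \to \Spec R$ is small and birational with $K_{X'}$ trivial on the contracted curves, i.e.\ it is a flop of $\pi$. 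The identity $\pi'_* T_0' \cong M \cong \pi_* T_1$ then falls out of (b), since both $\pi_* T_1$ and $\pi'_* T_0'$ recover the reflexive module underlying $\End_R(M)$. Finally these local pieces are glued to produce $X'$ globally, and uniqueness of the flop is the standard fact that two flops of the same contraction agree in codimension one and are relatively crepant, hence isomorphic by the negativity lemma.

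I expect the main obstacle to be steps (a)--(b) of the local construction: proving that the noncommutative gadget --- the quiver moduli space, or the $\Proj$ --- is a genuine scheme with the expected mild (Gorenstein terminal) singularities, that its structure morphism to $\Spec R$ is small, and simultaneously that the tautological object is tilting with endomorphism algebra exactly $A_1$. This is precisely where the hypotheses are genuinely consumed (Gorenstein, dimension $\ge 3$, codimension $\ge 2$ exceptional locus, fibres of dimension $\le 1$, and especially multiplicity $\le 2$), via the explicit classification of the possible flopping curve configurations; by contrast the globalisation and the uniqueness statement are comparatively formal.
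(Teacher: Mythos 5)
This statement is not proved in the paper: it is quoted verbatim from Van den Bergh (\cite[Theorems 4.4.1, 4.4.2]{3Dflops}) and used as a black box, so there is no in-paper argument to compare your sketch against. You correctly recognise this, and your outline is a reasonable account of the cited proof's overall shape --- passing to $M=\pi_*T_1$ and $A_1\cong\End_R(M)$, establishing that $A_1$ is maximal Cohen--Macaulay of finite global dimension over $R$ (this is where Gorenstein, crepancy from the codimension-$\ge 2$ exceptional locus, and the fibre condition are consumed), reducing to the complete local hypersurface case, reconstructing a second geometric model from the algebra, and identifying $A_1$-$\mod$ with $^0\Per(X'/R)$. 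Two caveats on faithfulness to the source, since your sketch is necessarily speculative at the key step: Van den Bergh builds the second model from the algebra via his moduli-of-quotients construction in the complete local setting rather than by crossing into an adjacent GIT chamber (that chamber-crossing picture is a later gloss, closer in spirit to the present paper's reinterpretation), and the isomorphisms $\End_{X'}(T_0')^{\op}\cong A_1$ and $\pi'_*T_0'\cong\pi_*T_1$ come from the matrix-factorisation symmetry of MCM modules over a hypersurface of multiplicity $\le 2$ (relating $M$, its syzygy, and its $R$-dual) rather than from a generic ``mutation of $M$ is $M$'' principle. Neither caveat affects the logic of the present paper, which only needs the statement as quoted.
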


 We refer the reader to \cite[Theorem 4.4.1]{3Dflops} for the definition of a flop in this setting. The results from the previous sections now imply the following corollary, showing that the variety $X$ and its flop $X'$ can both be constructed as quiver GIT quotients from tilting bundles on $X$.

\begin{FlopGIT} \label{FlopGIT}
Suppose we are in the situation of Theorem \ref{VanDenBerghFlops}. Then $X$ is the quiver GIT quotient of $A_0=\End_{X}(T_0)^{\op}$ for stability condition $\theta_{T_0}$ and dimension vector $d_{T_0}$, and $X'$ is the quiver GIT quotient of $A_1=\End_{X}(T_1)^{\op}$ for stability condition $\theta_{T_1}$ and dimension vector $d_{T_1}$.
\end{FlopGIT}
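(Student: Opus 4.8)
The plan is to apply Corollary \ref{BigTheorem1} twice — once to the contraction $\pi:X\to\Spec(R)$ itself and once to its flop $\pi':X'\to\Spec(R)$ — and then to transport the second conclusion along the algebra isomorphism $A_1\cong A_0'$ provided by Theorem \ref{VanDenBerghFlops}. For the first half, note that $\pi$ satisfies the hypotheses of Corollary \ref{BigTheorem1}: it is a projective morphism of finite type schemes over $\mathbb{C}$ with fibres of dimension $\le 1$, and $\mathbb{R}\pi_*\mathcal{O}_X\cong\mathcal{O}_R$ (the latter is part of the data underlying Theorem \ref{VanDenBerghFlops}, being needed even to form $^{-i}\Per(X/R)$). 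Choosing $T_0$ to be a projective generator of $^0\Per(X/R)$ with $\mathcal{O}_X$ as a summand, which is possible by Proposition \ref{Progenerators}, Corollary \ref{BigTheorem1} gives that $X$ is the fine moduli space of $\mathcal{F}^{ss}_{A_0,d_{T_0},\theta_{T_0}}$; since $d_{T_0}$ is indivisible and $\theta_{T_0}$ is generic by Lemma \ref{StabilityConditionEquiv}, this fine moduli space coincides with $\mathcal{M}^{ss}_{d_{T_0},\theta_{T_0}}=\mathcal{M}^{s}_{d_{T_0},\theta_{T_0}}$ by Theorems \ref{ssModuliGIT} and \ref{sModuliGIT}, so $X$ is the quiver GIT quotient of $A_0$ for $\theta_{T_0}$ and $d_{T_0}$, which is the first assertion.

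For the flop, Theorem \ref{VanDenBerghFlops} tells us that $\pi':X'\to\Spec(R)$ is again a projective morphism of the required type, its fibres have dimension $\le 1$ and $\mathbb{R}\pi'_*\mathcal{O}_{X'}\cong\mathcal{O}_R$ (again built into the existence of $^0\Per(X'/R)$), and $T_0'$ is a projective generator of $^0\Per(X'/R)$, which we may likewise take to contain $\mathcal{O}_{X'}$ as a summand. Exactly as above, Corollary \ref{BigTheorem1} shows that $X'$ is the quiver GIT quotient of $A_0'=\End_{X'}(T_0')^{\op}$ for the dimension vector $d_{T_0'}$ and the stability condition $\theta_{T_0'}$. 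So it only remains to identify the triple $(A_0',d_{T_0'},\theta_{T_0'})$ with $(A_1,d_{T_1},\theta_{T_1})$.

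To do this, observe that the isomorphism $A_1\cong A_0'$ of Theorem \ref{VanDenBerghFlops} is induced by the $R$-module isomorphism $\pi_*T_1\cong\pi'_*T_0'$ together with the identifications $\End_X(T_i)\cong\End_R(\pi_*T_i)$, so it carries primitive idempotents to primitive idempotents and hence matches the vertices of the two quiver presentations, identifying the indecomposable summand $E_i$ of $T_1$ with the corresponding summand $E_i'$ of $T_0'$. Since $\pi$ and $\pi'$ are birational, restricting to the dense open locus over which they are isomorphisms gives $\rk E_i=\rk\pi_*E_i=\rk\pi'_*E_i'=\rk E_i'$, so $d_{T_1}=d_{T_0'}$ under the vertex identification, and since $\pi_*\mathcal{O}_X\cong\mathcal{O}_R\cong\pi'_*\mathcal{O}_{X'}$ the distinguished vertex $0$ is preserved, so $\theta_{T_1}=\theta_{T_0'}$ as well by the definitions of these stability conditions. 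As the quiver GIT quotient depends only on the quiver with relations, the dimension vector and the stability condition, we conclude that $X'$ is the quiver GIT quotient of $A_1$ for $\theta_{T_1}$ and $d_{T_1}$, which completes the proof. The main obstacle is precisely this last step: one must check carefully that the abstract isomorphism $A_1\cong A_0'$ respects the preferred quiver presentations coming from the two tilting bundles and transports $(d_{T_1},\theta_{T_1})$ onto $(d_{T_0'},\theta_{T_0'})$; everything else is a direct application of Corollary \ref{BigTheorem1}.
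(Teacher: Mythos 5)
Your proof is correct and follows essentially the same route as the paper: apply Corollary \ref{BigTheorem1} to both $\pi$ and $\pi'$, then transport $(A_0',d_{T_0'},\theta_{T_0'})$ to $(A_1,d_{T_1},\theta_{T_1})$ via the isomorphism $A_1\cong A_0'$, matching summands through $\pi_*E_i\cong\pi'_*E_i'$, ranks through birationality, and the distinguished vertex through $\pi_*\mathcal{O}_X\cong\pi'_*\mathcal{O}_{X'}\cong\mathcal{O}_R$. The paper's proof is identical in structure, differing only in citing \cite[Lemma 4.2.1]{3Dflops} for the vertex identification where you argue directly via $\End_X(T_i)\cong\End_R(\pi_*T_i)$.
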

\begin{proof}
Corollary \ref{BigTheorem1} tells us both that $X$ is the quiver GIT quotient of $A_0$ for stability condition $\theta_{T_0}$ and dimension vector $d_{T_0}$, and that $X'$ is the quiver GIT quotient of $A_0' =\End_{X'}(T_0')^{\op}$ for stability condition $\theta_{T_0'}$ and dimension vector $d_{T_0'}$. We now relate $A_0'$, $\theta_{T_0'}$ and $d_{T_0'}$ to $A_1$, $\theta_{T_1}$ and $d_{T_1}$.

We note that by Theorem \ref{VanDenBerghFlops} $A_0'\cong A_1$, and we choose a presentation of $A_1$ as a quiver with relations matching that of $A_0'$ in order to  identify the  stability condition and dimension vector matching $\theta_{T_0'}$ and $d_{T_0'}$. In particular there is a decomposition of $T_1=\bigoplus_{i=0}^n E_i$ and $T_0'=\bigoplus_{i=0}^n E_i'$ such that  $\pi_* E_i \cong \pi'_* E_i'$. We note that under this correspondence the vertices corresponding to $\mathcal{O}_{X}$ and $\mathcal{O}_{X'}$ correspond by \cite[Lemma 4.2.1]{3Dflops} as $\pi_* \mathcal{O}_{X}\cong \pi'_* \mathcal{O}_{X'} \cong \mathcal{O}_R$, and since $\pi$ and $\pi'$ are birational $\rk_XE_i=\rk_R\pi_*E_i=\rk_R \pi'_* E_i' = \rk_{X'}E_i'$.  Hence $A_0' \cong A_1$, $d_{T_0'}=d_{T_1}$ and $\theta_{T_0'}=\theta_{T_1}$ so $X'$ is the quiver GIT quotient of $A_1=\End_{X}(T_1)^{\op}$ for stability condition $\theta_{T_1}$ and dimension vector $d_{T_1}$.
\end{proof}

\subsection{Example: Resolutions of Rational Singularities} \label{ResolutionsRational} We give a further application of Theorem \ref{BigTheorem1} to the case of rational singularities, extending and recapturing several well-known examples. 

\begin{resolution}
 Let $Y$ be a (possibly singular) variety. A smooth variety $X$ with a projective birational map $\pi: X \rightarrow Y$ that is bijective over the smooth locus of $Y$ is called a \emph{resolution} of $Y$. A resolution, $X$, is a \emph{minimal resolution} of $Y$ if any other resolution factors through it. In general minimal resolutions do not exist, but they always exist for surfaces, \cite[Corollary 27.3]{LipmanRationalSingularities}. A resolution, $X$, is a \emph{crepant resolution} of $Y$ if $\pi^* \omega_Y =\omega_{X}$, where $\omega_{X}$ and $\omega_Y$ are the canonical classes of $X$ and $Y$ which we assume are normal. In general crepant resolutions do not exist. A singularity, $Y$, is \emph{rational} if for any resolution $\pi:X\rightarrow Y$
\begin{equation*}
\mathbb{R} \pi_{*} \mathcal{O}_{X} \cong \mathcal{O}_{Y}.
\end{equation*}
If this holds for one resolution it holds for all resolutions, \cite[Lemma 1]{Viehweg}.
\end{resolution}

Minimal resolutions of rational affine singularities $\pi:X \rightarrow \Spec(R)$ satisfy the condition $\mathbb{R}\pi_* \mathcal{O}_{X} \cong \mathcal{O}_R$ by definition, and in the case of surface singularities it is immediate that the dimensions of the fibres of $\pi$ are $\le 1$. Hence the following corollary is immediate from Corollary \ref{BigTheorem1} ii).

\begin{RationalSurfaceSing} \label{RationalSurfaceSing} Suppose that $\pi:X \rightarrow \Spec(R)$ is the minimal resolution of a rational surface singularity. Then there is a tilting bundle $T_0$ on $X$ as in Theorem \ref{TiltingBundles1dim}, and by Corollary \ref{BigTheorem1} ii) $X$ is the fine moduli space of quiver representations of $A_0=\End_{X}(T_0)^{\op}$ for dimension vector $d_{T_0}$ and stability condition $\theta_{T_0}$ with tautological bundle $T_0^{\vee}$.
\end{RationalSurfaceSing}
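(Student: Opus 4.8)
The plan is simply to check that the minimal resolution $\pi:X \rightarrow \Spec(R)$ of an affine rational surface singularity meets every hypothesis of Corollary \ref{BigTheorem1}; once this is done the statement — including the identification of the tautological bundle with $T_0^{\vee}$ — follows verbatim from that corollary.

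First I would record that the minimal resolution exists and has the right shape. For a normal affine surface singularity $\Spec(R)$ a minimal resolution exists by \cite[Corollary 27.3]{LipmanRationalSingularities}, and it is a projective birational morphism $\pi:X \rightarrow \Spec(R)$ with $X$ smooth; in particular $X$ is a separated, reduced, irreducible scheme of finite type over $\mathbb{C}$ and $\Spec(R)$ is affine of finite type, so $\pi$ is a projective morphism of finite type schemes over $\mathbb{C}$ as required. Next I would verify the two remaining conditions. Since $\Spec(R)$ is a rational singularity, the defining property $\mathbb{R}\pi_*\mathcal{O}_X \cong \mathcal{O}_R$ holds for every resolution, in particular the minimal one (independence of the chosen resolution being \cite[Lemma 1]{Viehweg}). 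For the fibre dimension, $\pi$ is birational with $X$ irreducible of dimension $2$ and is an isomorphism over the smooth locus, so its non-trivial fibres lie over the singular points; each such fibre is a proper closed subscheme of the surface $X$ and can contain no $2$-dimensional component (such a component would be all of $X$, forcing $\pi$ to be constant and contradicting birationality), hence has dimension $\le 1$. Thus $\pi$ has fibres of dimension $\le 1$.

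With the hypotheses in hand, Corollary \ref{BigTheorem1} applies directly: it produces a tilting bundle $T_0$ on $X$ which is a projective generator of $^{0}\Per(X/R)$ as in Theorem \ref{TiltingBundles1dim} (chosen, via Proposition \ref{Progenerators}, with a decomposition having $\mathcal{O}_X$ as its summand at the $0$ vertex so that Assumption \ref{Assumptions} holds and $d_{T_0}$, $\theta_{T_0}$ are defined), and it asserts that $X$ is the fine moduli space of the quiver representation moduli functor for $A_0=\End_X(T_0)^{\op}$ with dimension vector $d_{T_0}$ and stability condition $\theta_{T_0}$, with tautological bundle $T_0^{\vee}$. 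There is no genuine obstacle here: the entire argument is the bookkeeping above, and all the substantive content is carried by the cited results — the only point demanding care is making sure the "rational surface singularity" hypothesis is invoked exactly where Corollary \ref{BigTheorem1} needs it, namely rationality for $\mathbb{R}\pi_*\mathcal{O}_X \cong \mathcal{O}_R$ and the surface-plus-birationality hypothesis for the bound on the fibre dimensions.
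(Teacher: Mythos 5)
Your proposal is correct and follows exactly the paper's own route: the paper likewise treats this as immediate from Corollary \ref{BigTheorem1}, noting that rationality gives $\mathbb{R}\pi_*\mathcal{O}_X \cong \mathcal{O}_R$ and that the surface-plus-birationality hypothesis forces the fibres to have dimension $\le 1$. You simply spell out these verifications in slightly more detail than the paper does.
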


This gives a moduli interpretation of minimal resolutions for all rational surface singularities. In certain examples the tilting bundles and algebras are well-understood and this corollary recovers previously known examples.

\begin{KleinianSingularitiesExample}[Kleinian Singularities] \label{Kleinian} Kleinian singularities are quotient singularities $\mathbb{C}^2/G$ for $G$ a non-trivial finite subgroup of $\SL_2(\mathbb{C})$. These have crepant resolutions, and in particular $\Hilb^{G}(\mathbb{C}^2)=X \rightarrow \mathbb{C}^2/G$ is a crepant resolution, \cite{ItoNakamura}. There is a tilting bundle $T$ on $X$ constructed by Kapranov and Vasserot \cite{KapranovVasserot}, which, if we take the multiplicity free version, matches the $T_0$  of Theorem \ref{TiltingBundles1dim}. Then $A=\End_{X}(T)^{\op}$ is presentable as the McKay quiver with relations, the preprojective algebra, and $G$-$\Hilb(\mathbb{C}^2)$ is the quiver GIT quotient of the preprojective algebra for stability condition $\theta_T$ and dimension vector $d_{T}$. The crepant resolutions were previously constructed as hyper-K\"ahler quotients by Kronheimer \cite{KronheimerALE}, this approach was interpreted as a GIT quotient construction by Cassens and Slodowy \cite{CassensSlodowy}, and as a quiver GIT quotient by Crawley-Boevey \cite{ExceptionalFiberCrawleyBoevey}.
\end{KleinianSingularitiesExample}

\begin{SurfaceQuotientSingularitiesExample}[Surface Quotient Singularities] \label{Reconstruction}

As an expansion of the previous example we consider $G$ a non-trivial, pseudo-reflection-free, finite subgroup of $ \GL_2(\mathbb{C})$. Then $\mathbb{C}^2/G$ is a rational singularity with a minimal resolution $\pi: G$-$\Hilb(\mathbb{C})=X \rightarrow \mathbb{C}^2/G$ by \cite{HilbertGL2Ishii}. The variety $X$ has the tilting bundle $T_0$, and the algebras $A=\End_{X}(T_0)^{\op}$ can be presented as the path algebras of  quivers with relations, the reconstruction algebras, which are defined and explicitly calculated in \cite{WemyssGL2,RCAA,RCAD1,RCAD2}. If $G < \SL_2(\mathbb{C})$ then this example falls into the case of Kleinian singularities above, otherwise these fall into a classification in types $\mathbb{A},\mathbb{D},\mathbb{T},\mathbb{I}$, and $\mathbb{O}$, \cite[Section 5]{WemyssGL2}.  It was shown by explicit calculation in \cite{RCAA,RCAD1,RCAD2} that in types $\mathbb{A}$ and $\mathbb{D}$ the minimal resolutions $X$ are quiver GIT quotients of $A$ with stability condition $\theta_{T_0}$ and dimension vector $d_{T_0}$. Corollary \ref{RationalSurfaceSing} recovers these cases without needing to perform explicit calculations, and also includes the same result for the remaining cases $\mathbb{T},\mathbb{I}$, and $\mathbb{O}$.
\end{SurfaceQuotientSingularitiesExample}

\begin{ReconstructionAlg} \label{ReconstructionAlg} Suppose $G< \GL_2(\mathbb{C})$ is a finite, non-trivial, pseudo-reflection-free group. Then the minimal resolution of the quotient singularity $\mathbb{C}^2/G$ can be constructed as the fine moduli space of the quiver representation moduli functor of the corresponding reconstruction algebra for stability condition $\theta_{T_0}$ and dimension vector $d_{T_0}$, and the tautological bundle is the tilting bundle $T_0^{\vee}$.
\end{ReconstructionAlg}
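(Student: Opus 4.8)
The plan is to obtain this as an immediate specialisation of Corollary \ref{RationalSurfaceSing} (equivalently Corollary \ref{BigTheorem1}), so the work is almost entirely in verifying hypotheses and making the identification with the reconstruction algebra. First I would check that $\pi\colon X \rightarrow \Spec(R)$, with $R=\mathbb{C}[x,y]^G$, is the minimal resolution of a rational surface singularity. Since $G$ is finite and we work over $\mathbb{C}$, the quotient $\Spec(R)$ is a normal affine surface with rational singularities: quotient singularities are rational, as the trace map splits $\mathcal{O}_{\Spec(R)} \rightarrow \mathbb{R}\pi_*\mathcal{O}_X$ for any resolution (cf.\ \cite{Viehweg}). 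Because $G$ is non-trivial and pseudo-reflection-free, $R$ is not regular, so $\Spec(R)$ is genuinely singular and its minimal resolution exists by \cite{LipmanRationalSingularities}; by \cite{HilbertGL2Ishii} this minimal resolution is realised as $G$-$\Hilb(\mathbb{C}^2)$. The morphism $\pi$ is projective and birational between surfaces, so its fibres have dimension $\le 1$, and $\mathbb{R}\pi_*\mathcal{O}_X \cong \mathcal{O}_R$ by rationality; hence the hypotheses of Corollary \ref{BigTheorem1} are met.

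Next I would invoke Corollary \ref{RationalSurfaceSing} verbatim for this $\pi$. This supplies a tilting bundle $T_0$ on $X$ furnished by Theorem \ref{TiltingBundles1dim}, chosen to be a projective generator of $^0\Per(X/R)$ possessing a summand isomorphic to $\mathcal{O}_X$ (possible by Proposition \ref{Progenerators}, after passing to the multiplicity-free version as in Assumption \ref{Assumptions}), and it asserts that $X$ is the fine moduli space of $\mathcal{F}^{ss}_{A_0,d_{T_0},\theta_{T_0}}$ with tautological bundle $T_0^{\vee}$, where $A_0=\End_{X}(T_0)^{\op}$.

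The remaining point is the identification of $A_0$ with the reconstruction algebra. Here I would appeal to \cite{WemyssGL2,RCAA,RCAD1,RCAD2}: the reconstruction algebra is, by construction, a presentation as a quiver with relations of $\End_{X}(T_0)^{\op}$ for precisely this tilting bundle on the minimal resolution of $\mathbb{C}^2/G$ (falling into the types $\mathbb{A},\mathbb{D},\mathbb{T},\mathbb{I},\mathbb{O}$ of the classification, or into the preprojective-algebra case of Example \ref{Kleinian} when $G<\SL_2(\mathbb{C})$). Substituting this presentation for $A_0$ in the conclusion of Corollary \ref{RationalSurfaceSing}, and using $G$-$\Hilb(\mathbb{C}^2)\cong X$ from \cite{HilbertGL2Ishii}, yields exactly the statement.

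I do not anticipate a genuine obstacle, as this is a corollary of results already established; the only delicate bookkeeping is ensuring that the $T_0$ produced abstractly by Theorem \ref{TiltingBundles1dim} can be normalised to satisfy Assumption \ref{Assumptions} and agrees, up to the harmless multiplicity-free reduction, with the bundle used to define the reconstruction algebra in the cited references, together with confirming that $\theta_{T_0}$ and $d_{T_0}$ in the present notation match the stability condition and dimension vector appearing there.
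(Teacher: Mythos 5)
Your proposal is correct and follows essentially the same route as the paper: reduce to Corollary \ref{RationalSurfaceSing} and then identify $A_0$ with the reconstruction algebra of \cite{WemyssGL2,RCAA,RCAD1,RCAD2}. The one point the paper makes explicit that you leave as ``bookkeeping'' is that those references define the reconstruction algebra as $\End_X(T_1)=\End_X(T_0^{\vee})$, so the identification with $A_0=\End_X(T_0)^{\op}$ rests on the canonical isomorphism $\End_X(T_0^{\vee})\cong\End_X(T_0)^{\op}$.
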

\begin{proof}

We note that in Theorem \ref{TiltingBundles1dim} $T_1=T_0^{\vee}$ and that $\End_{X}(T_0^{\vee}) \cong \End_{X}(T_0)^{\op}$. Hence our definition of $A=\End_{X}(T_0)^{\op}$ as the reconstruction algebra matches that given in \cite{WemyssGL2,RCAA,RCAD1,RCAD2} as $A=\End_{X}(T_1)$. Then the result is an immediate corollary of Corollary \ref{RationalSurfaceSing}.
\end{proof}

\begin{DetermentalSingularitiesExample}[Determinantal Singularities]
We give one higher dimensional example. Let $R$ be the $\mathbb{C}$-algebra $\mathbb{C}[X_0, \dots X_l, Y_1, \dots Y_{l+1}]$ subject to the relations generated by all two by two minors of the matrix
\begin{equation*}
\left( \begin{array}{c c c c c c} X_0 & X_1 & \dots & X_i & \dots & X_{l} \\
Y_1 & Y_2 & \dots & Y_{i+1} & \dots & Y_{l+1}
\end{array} 
\right).
\end{equation*}
Then $\Spec(R)$ is a $l+2$ dimensional rational singularity and has an isolated singularity at the origin. This has a resolution given by $\pi: X= \Tot \left( \bigoplus_{i=1}^l \mathcal{O}_{\mathbb{P}^1}(-1) \right) \rightarrow \Spec(R)$, the total space of the locally free sheaf $\bigoplus_{i=1}^l\mathcal{O}_{\mathbb{P}^1}(-1)$ mapping onto its affinisation. The variety $X$ has a tilting bundle $T_0$ by Theorem \ref{TiltingBundles1dim}, which, considering the bundle map $f:X \rightarrow \mathbb{P}^1 $, we can identify as $T_0=\mathcal{O}_{X} \oplus f^* \mathcal{O}_{\mathbb{P}^1}(-1)$. We can then present $A_0=\End_{X}(T_0)^{\op}$ as the following quiver with relations, $(Q,\Lambda)$.
\begin{center}
\begin{align*}
\begin{aligned}
\begin{tikzpicture} [bend angle=45, looseness=1]
\node (C1) at (0,0)  {$0$};
\node (C2) at (5,0)  {$1$};
\node (C3) at (2.5,-0.45)  {$\vdots$};
\node (C3) at (2.5,-1.27)  {$\vdots$};
\draw [->,bend left] (C1) to node[gap]  {\small{$a$}} (C2);
\draw [->,bend left=25] (C1) to node[gap] {\small{$c$}} (C2);
\draw [->,bend left=90] (C2) to node[gap]  {\small{$k_{l+1}$}} (C1);
\draw [->,bend left=5] (C2) to node[gap] {\small{$k_1$}} (C1);
\draw [->,bend left=35] (C2) to node[gap] {\small{$k_i$}} (C1);
\end{tikzpicture}
\end{aligned}
& \quad
\begin{aligned}
k_iak_j &= k_jak_i \\
k_ick_j&=k_j c k_i \\
ak_j c &= c k_j a  \\
 \text{ for } 1 \le & \, i,j  \le l+1  
\end{aligned}
\end{align*}
\end{center}

By Theorem \ref{BigTheorem1} we know that $X$ can be reconstructed as the quiver GIT quotient of $A_0$ with dimension vector $d_{T_0}=(1,1)$ and stability condition $\theta_{T_0}=(-1,1)$. In this example we will explicitly verify this. A dimension $d_{T_0}$ representation is defined by assigning a value $\lambda_i \in \mathbb{C}$ to each $k_i$ and $(\alpha,\gamma) \in \mathbb{C}^2$ to $(a,c)$. The relations are all automatically satisfied so $\Rep_{d_{T_0}}(Q,\Lambda)= \mathbb{C}^{l+1} \times \mathbb{C}^2$.  Then a representation is $\theta_{T_0}$ stable if it has no dimension $(1,0)$ submodules, so these correspond to the subvariety with  $(\alpha,\gamma) \in \mathbb{C}^2/(0,0)$, hence $\Rep_d(Q,\Lambda)^{ss}= \mathbb{C}^{l+1} \times \mathbb{C}^2/(0,0)$. We then find that the corresponding quiver GIT quotient is given by the action of $\mathbb{C}^*$ on $ \mathbb{C}^{l+1}  \times \mathbb{C}^2/(0,0) $ with weights $-1$ on  $\mathbb{C}^2/(0,0)$ and $1$ on $\mathbb{C}^{l+1}$. This produces the total bundle $X$.

When $l=2$  this is the motivating example of the Atiyah flop given as the opening example of \cite{3Dflops} and $A_0$ is the conifold quiver. In this case, by Theorem \ref{FlopGIT}, we can calculate the flop as the quiver GIT quotient of $A_1 \cong A_0^{\op}$.
\end{DetermentalSingularitiesExample}

\appendix

\section{Comparing Quiver Moduli Functors} \label{QuiverModuliForAlgebra}
As we noted in the introduction, our results are inspired by a theorem of Sekiya and Yamaura that compares quiver GIT quotients for algebras related by tilting modules. This is done by constructing natural transformations between moduli functors which should have the quiver GIT quotients as moduli spaces, however the quiver representation moduli functor considered in \cite{SekiyaYamaura} is different to the one defined in Section \ref{Quiver GIT moduli functors} and does not always have the quiver GIT quotient as a moduli space. In this appendix we outline the minimal changes required to reinterpret the results of \cite{SekiyaYamaura} for a correct moduli functor.

The moduli functor for quiver representations defined  in \cite[Section 4.1]{SekiyaYamaura} is 
\begin{align*}
 \mathcal{F}_{A,d,\theta}^{SY}: & \, \mathfrak{R} \rightarrow \mathfrak{Sets} \\
    & R  \mapsto \left. \mathcal{S}^{ss}_{A,d,\theta}(R) \middle/ \text{$\sim_{SY}$} \right.
\end{align*} 
with the set $\mathcal{S}^{ss}_{A,d,\theta}(R)$ defined as in Section \ref{Quiver GIT moduli functors} and the equivalence condition $M_1 \sim_{SY} M_2$ if $M_1 \otimes_R R/ \mathfrak{m}$ is $S$-equivalent to $M_2 \otimes_R R/\mathfrak{m}$ for all $\mathfrak{m} \in \MaxSpec(R)$. This differs from the functor $\mathcal{F}_{A,d,\theta}^{ss}$ defined in Section \ref{Quiver GIT moduli functors} by using the equivalence $\sim_{SY}$ rather than the equivalence $\sim$. However, as the following example shows, the equivalence $\sim_{SY}$ is too restrictive.

\begin{EquivalenceExample} Let $A=\mathbb{C}[x]$, $d=1$, and $\theta=0$. Then $A$ can be presented as the path algebra of a quiver with a single vertex and single loop, and the quiver GIT quotient is $\Spec(A)$. In particular, if this were a fine moduli space for $\mathcal{F}_{A,d,\theta}^{SY}$ then $\mathcal{F}_{A,d,\theta}^{SY}(\mathbb{C}[\epsilon]/\epsilon^2) \cong \Hom_{\mathfrak{Sch}}(\Spec \, \mathbb{C}[\epsilon]/\epsilon^2, \Spec \,A)  \cong \mathbb{C}^2$. However \[  \{ M_{a,b}:=\mathbb{C}[x,\epsilon]/(x- a-b \epsilon) \,  | \, a,b \in \mathbb{C} \} = \mathcal{S}_{A,d,\theta}^{ss}(\mathbb{C}[\epsilon]/\epsilon^2)  \]
and $M_{a,b} \sim_{SY} M_{\alpha,\beta} \Leftrightarrow a=\alpha$ so $\mathcal{F}_{A,d,\theta}^{SY}(\mathbb{C}[\epsilon]/\epsilon^2) \cong \mathbb{C}$. Hence the quiver GIT quotient is not a fine moduli space for the functor $\mathcal{F}_{A,d,\theta}^{SY}$.
\end{EquivalenceExample}

This indicates that $\sim_{SY}$ is not the correct equivalence to use to define a quiver representation moduli functor. Below we note a brief amendment that adapts the results of \cite{SekiyaYamaura} to work with the functor used in this paper instead.

Firstly, the moduli functor defined in \cite[Section 4.1]{SekiyaYamaura} should be replaced by the moduli functor $\mathcal{F}_{A,d,\theta}^{ss}$ defined in Section \ref{Quiver GIT moduli functors} and the statement that $\mathcal{M}_{d,\theta}^{ss}$ is a coarse moduli space can then be replaced by the statement that $\mathcal{F}_{A,d,\theta}^{ss}$ is corepresented by $\mathcal{M}_{d,\theta}^{ss}$ and when $d$ is indivisible and $\theta$ generic this is a fine moduli space.

There are then minimal changes to make; the majority of the work in \cite{SekiyaYamaura} concerns only the sets $\mathcal{S}_{A,d,\theta}^{ss}(R)$ so needs no alteration.  The moduli functor enters the results via \cite[Proposition 4.5]{SekiyaYamaura}, which gives conditions for a family of functors $F^R:\mathcal{S}_{B,d',\theta'}^{ss}(R) \rightarrow \mathcal{S}_{A,d,\theta}^{ss}(R)$ to define a natural transformation between quiver representation moduli functors and shows that such a natural transformation induces a morphism of schemes between the quiver GIT quotients. A natural transformation of functors induces a morphism between corepresenting schemes by the universal property, and to adapt the conditions for a family to induce a natural transformation for the moduli functor with equivalence $\sim$ rather than $\sim_{SY}$ we need only add an additional condition to ensure that the natural transformation is well defined under the equivalence $\sim$:
\[
 \quad  F^R(M \otimes_R L) \cong F^R(M) \otimes_R L \text{ for any invertible $R$-module $L$ and $M \in \mathcal{S}_{A,d,\theta}^{ss}(R).$}
\]
The only other results which involve the moduli functor are \cite[Theorems 4.6 and 4.11]{SekiyaYamaura} which check that the conditions of \cite[Proposition 4.5]{SekiyaYamaura} are satisfied by the specific functors $\Hom_{A^R}(T^R,-)$ and $T^R \otimes_{A^R} (-)$ when $T$ has a finite length resolution by projective $A$-modules, and \cite[Theorem 4.20]{SekiyaYamaura} which combines these two results in the case where $T$ is a tilting module. It is easy to see that the functors  $\Hom_{A^R}(T^R,-)$ and $T^R \otimes_{A^R} (-)$ also satisfy the additional condition: this follows from \cite[Lemmas 4.7 and 4.14]{SekiyaYamaura} in the case of an invertible $R$-module. As such the main result \cite[Theorem 4.20]{SekiyaYamaura} holds when the moduli functor is taken to be $\mathcal{F}^{ss}_{A,d,\theta}$ rather than $\mathcal{F}_{A,d,\theta}^{SY}$.

\begin{SekiyaYamauraCorrection}[{\cite[Theorem 4.20]{SekiyaYamaura}}] Let $B$ be an algebra with tilting module $T$. Define $A=\End_{B}(T)^{\op}$, suppose that both $A$ and $B$ are presented as  path algebras of quivers with relations, and let $\mathcal{F}^{ss}_{A,d,\theta}$ and $\mathcal{F}^{ss}_{B,d',\theta'}$ denote quiver representation moduli functors on $A$ and $B$ for some choice of dimension vectors $d,d'$ and stability conditions $\theta, \theta'$. Then if the tilting equivalences 
\begin{center}
\[
\begin{tikzpicture} [bend angle=15, looseness=1]
\node (C1) at (0,0)  {$D^b(B$-$\mod)$};
\node (C2) at (4,0)  {$D^b(A$-$\mod)$};
\draw [->,bend left] (C1) to node[above]  {$\scriptstyle{\mathbb{R}\Hom_{B}(T,-)}$} (C2);
\draw [->,bend left] (C2) to node[below]  {\scriptsize{$ T\otimes ^{\mathbb{L}}_{A}(-)$}} (C1);
\end{tikzpicture}
\]
\end{center}
 restrict to a bijection between $\mathcal{F}^{ss}_{B,d',\theta'}(\mathbb{C})$ and $\mathcal{F}^{ss}_{A,d,\theta}(\mathbb{C})$ then $\mathcal{F}^{ss}_{B,d',\theta'}$ is naturally isomorphic to $\mathcal{F}^{ss}_{A,d,\theta}$. Hence by the universal property of corepresenting schemes the corresponding quiver GIT quotients are isomorphic as schemes.
\end{SekiyaYamauraCorrection}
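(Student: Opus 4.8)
The plan is to follow the argument of Sekiya and Yamaura, \cite{SekiyaYamaura}, incorporating the corrections described above, and then to invoke the universal property of corepresenting schemes. For $R \in \mathfrak{R}$ write $A^R = A \otimes_{\mathbb{C}} R$, $B^R = B \otimes_{\mathbb{C}} R$, $T^R = T \otimes_{\mathbb{C}} R$; since $T$ is a tilting $B$-module, $T^R$ is a tilting $B^R$-module with $\End_{B^R}(T^R)^{\op} \cong A^R$, and $\mathbb{R}\Hom_{B^R}(T^R,-)$ and $T^R \otimes^{\mathbb{L}}_{A^R}(-)$ are mutually inverse equivalences $D^b(B^R\text{-}\mod) \simeq D^b(A^R\text{-}\mod)$.

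First I would produce, for each $R$, set maps $F^R \colon M \mapsto \Hom_{B^R}(T^R,M)$ and $G^R \colon N \mapsto T^R \otimes_{A^R} N$ between $\mathcal{S}^{ss}_{B,d',\theta'}(R)$ and $\mathcal{S}^{ss}_{A,d,\theta}(R)$, using the underived functors, which is legitimate because $T$ has a finite-length projective resolution over $B$; this is the content of \cite[Theorems 4.6 and 4.11]{SekiyaYamaura}. The crucial point, where the hypothesis enters, is checking that $F^R$ lands in $\mathcal{S}^{ss}_{A,d,\theta}(R)$. Since $M$ is flat over $R$, base change along each $R \to R/\mathfrak{m}$ is Tor-independent, so $\mathbb{R}\Hom_{B^R}(T^R,M) \otimes^{\mathbb{L}}_R R/\mathfrak{m} \cong \mathbb{R}\Hom_{B}(T, M \otimes_R R/\mathfrak{m})$; by the assumed bijection on $\mathbb{C}$-points the right-hand side lies in $\mathcal{F}^{ss}_{A,d,\theta}(\mathbb{C})$, hence is a genuine $A$-module of dimension vector $d$ which is $\theta$-(semi)stable and concentrated in degree $0$. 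The module-theoretic analogue of Lemma \ref{VitalLemma1}, applied to the bounded complex $\mathbb{R}\Hom_{B^R}(T^R,M)$ all of whose derived fibres are concentrated in degree $0$, then forces $\mathbb{R}\Hom_{B^R}(T^R,M) = \Hom_{B^R}(T^R,M)$ to be a finitely generated flat $R$-module with the required fibrewise properties; the computation for $G^R$ is symmetric.

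Next I would promote these set maps to natural transformations of the moduli \emph{functors}. Compatibility with the equivalence $\sim$ needs $F^R(M \otimes_R L) \cong F^R(M) \otimes_R L$ for every invertible $R$-module $L$, which follows from \cite[Lemmas 4.7 and 4.14]{SekiyaYamaura}, and naturality in $R$ is the base-change identity $F^{R'}(M \otimes_R R') \cong F^R(M) \otimes_R R'$ along $R \to R'$, which again reduces, via the finite projective resolution of $T$, to ordinary $\otimes$--$\Hom$ compatibility. This gives natural transformations $\eta \colon \mathcal{F}^{ss}_{B,d',\theta'} \to \mathcal{F}^{ss}_{A,d,\theta}$ and $\eta' \colon \mathcal{F}^{ss}_{A,d,\theta} \to \mathcal{F}^{ss}_{B,d',\theta'}$. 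Because $\mathbb{R}\Hom_{B^R}(T^R,-)$ and $T^R \otimes^{\mathbb{L}}_{A^R}(-)$ are mutually inverse on $D^b$, and because on the flat families in question the underived and derived functors agree by the previous step, the compositions $\eta' \circ \eta$ and $\eta \circ \eta'$ are the identity after passing to the quotients by $\sim$, so $\eta$ is a natural isomorphism.

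Finally, $\mathcal{F}^{ss}_{B,d',\theta'}$ and $\mathcal{F}^{ss}_{A,d,\theta}$ are corepresented by $\mathcal{M}^{ss}_{d',\theta'}$ and $\mathcal{M}^{ss}_{d,\theta}$ respectively by Theorem \ref{ssModuliGIT}, so the natural isomorphism $\eta$ induces a unique isomorphism of schemes $\mathcal{M}^{ss}_{d',\theta'} \cong \mathcal{M}^{ss}_{d,\theta}$ by the universal property. I expect the main obstacle to be the base-change step of the second paragraph: without the bijection hypothesis there is no reason for $\mathbb{R}\Hom_B(T,-)$ to send a $\theta'$-(semi)stable module to a genuine module, and the underived functor $\Hom_{B^R}(T^R,-)$ could then fail to preserve flat families; the hypothesis is exactly what kills the higher cohomology fibrewise, after which flatness and finite generation of $\Hom_{B^R}(T^R,M)$ follow formally.
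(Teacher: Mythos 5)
Your proposal is correct and follows essentially the same route as the paper: both reduce the statement to Sekiya and Yamaura's original results on the sets $\mathcal{S}^{ss}_{A,d,\theta}(R)$ via the functors $\Hom_{B^R}(T^R,-)$ and $T^R\otimes_{A^R}(-)$, identify the compatibility $F^R(M\otimes_R L)\cong F^R(M)\otimes_R L$ with invertible modules as the only new condition needed for the corrected equivalence $\sim$, verify it via \cite[Lemmas 4.7 and 4.14]{SekiyaYamaura}, and conclude by corepresentability. The extra detail you supply on the fibrewise base-change and flatness argument is exactly what the paper delegates to \cite[Theorems 4.6 and 4.11]{SekiyaYamaura}, so there is no substantive difference in approach.
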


\bibliographystyle{plain}
\bibliography{QGITBIB}

\begin{thebibliography}{10}

\bibitem{StacksProject}
{S}tacks {P}roject : http://stacks.math.columbia.edu/.

\bibitem{BergmanProudfootGIT}
A.~Bergman and N.~J. Proudfoot.
\newblock Moduli spaces for {B}ondal quivers.
\newblock {\em Pacific J. Math.}, 237(2):201--221, 2008.

\bibitem{BridgelandEquivalences}
T.~Bridgeland.
\newblock Equivalences of triangulated categories and {F}ourier-{M}ukai
  transforms.
\newblock {\em Bull. London Math. Soc.}, 31(1):25--34, 1999.

\bibitem{FlopsDerivedCategories}
T.~Bridgeland.
\newblock Flops and derived categories.
\newblock {\em Invent. Math.}, 147(3):613--632, 2002.

\bibitem{McKayBKR}
T.~Bridgeland, A.~King, and M.~Reid.
\newblock The {M}c{K}ay correspondence as an equivalence of derived categories.
\newblock {\em J. Amer. Math. Soc.}, 14(3):535--554 (electronic), 2001.

\bibitem{BHTilting}
R.O. Buchweitz and L.~Hille.
\newblock Hochschild (co-)homology of schemes with tilting object.
\newblock {\em Trans. Amer. Math. Soc.}, 365(6):2823--2844, 2013.

\bibitem{Calabrese2013}
J.~Calabrese and M.~Groechenig.
\newblock Moduli problems in abelian categories and the reconstruction theorem.
\newblock October 2013.
\newblock ArXiv:1310.6600.

\bibitem{CassensSlodowy}
H.~Cassens and P.~Slodowy.
\newblock On {K}leinian singularities and quivers.
\newblock In {\em Singularities ({O}berwolfach, 1996)}, volume 162 of {\em
  Progr. Math.}, pages 263--288. Birkh\"auser, Basel, 1998.

\bibitem{ChenFlops}
J.-C. Chen.
\newblock Flops and equivalences of derived categories for threefolds with only
  terminal {G}orenstein singularities.
\newblock {\em J. Differential Geom.}, 61(2):227--261, 2002.

\bibitem{CrawSpecialMcKay}
A.~Craw.
\newblock The special {M}c{K}ay correspondence as an equivalence of derived
  categories.
\newblock {\em Q. J. Math.}, 62(3):573--591, 2011.

\bibitem{CrawIshii}
A.~Craw and A.~Ishii.
\newblock Flops of {$G$}-{H}ilb and equivalences of derived categories by
  variation of {GIT} quotient.
\newblock {\em Duke Math. J.}, 124(2):259--307, 2004.

\bibitem{CrawWinnMoriDreamSpaces}
A.~Craw and D.~Winn.
\newblock Mori dream spaces as fine moduli of quiver representations.
\newblock {\em J. Pure Appl. Algebra}, 217(1):172--189, 2013.

\bibitem{ExceptionalFiberCrawleyBoevey}
W.~Crawley-Boevey.
\newblock On the exceptional fibres of {K}leinian singularities.
\newblock {\em Amer. J. Math.}, 122(5):1027--1037, 2000.

\bibitem{Eisenbud}
D.~Eisenbud.
\newblock {\em Commutative algebra}, volume 150 of {\em Graduate Texts in
  Mathematics}.
\newblock Springer-Verlag, New York, 1995.
\newblock With a view toward algebraic geometry.

\bibitem{HartshorneAG}
R.~Hartshorne.
\newblock {\em Algebraic geometry}.
\newblock Springer-Verlag, New York, 1977.
\newblock Graduate Texts in Mathematics, No. 52.

\bibitem{HilleVdB}
L.~Hille and M.~Van~den Bergh.
\newblock Fourier-{M}ukai transforms.
\newblock In {\em Handbook of tilting theory}, volume 332 of {\em London Math.
  Soc. Lecture Note Ser.}, pages 147--177. Cambridge Univ. Press, Cambridge,
  2007.

\bibitem{HilbertGL2Ishii}
A.~Ishii.
\newblock On the {M}c{K}ay correspondence for a finite small subgroup of {${\rm
  GL}(2,\Bbb C)$}.
\newblock {\em J. Reine Angew. Math.}, 549:221--233, 2002.

\bibitem{ItoNakamura}
Y.~Ito and I.~Nakamura.
\newblock Mc{K}ay correspondence and {H}ilbert schemes.
\newblock {\em Proc. Japan Acad. Ser. A Math. Sci.}, 72(7):135--138, 1996.

\bibitem{KapranovVasserot}
M.~Kapranov and E.~Vasserot.
\newblock Kleinian singularities, derived categories and {H}all algebras.
\newblock {\em Math. Ann.}, 316(3):565--576, 2000.

\bibitem{KingTilt}
A.~King.
\newblock Tilting bundles on some rational surfaces.
\newblock http://www.maths.bath.ac.uk/~masadk/papers/tilt.pdf.

\bibitem{KingQGIT}
A.~D. King.
\newblock Moduli of representations of finite-dimensional algebras.
\newblock {\em Quart. J. Math. Oxford Ser. (2)}, 45(180):515--530, 1994.

\bibitem{KronheimerALE}
P.~B. Kronheimer.
\newblock The construction of {ALE} spaces as hyper-{K}\"ahler quotients.
\newblock {\em J. Differential Geom.}, 29(3):665--683, 1989.

\bibitem{CMReps}
G.~J. Leuschke and R.~Wiegand.
\newblock {\em Cohen-{M}acaulay representations}, volume 181 of {\em
  Mathematical Surveys and Monographs}.
\newblock American Mathematical Society, Providence, RI, 2012.

\bibitem{LipmanRationalSingularities}
J.~Lipman.
\newblock Rational singularities, with applications to algebraic surfaces and
  unique factorization.
\newblock {\em Inst. Hautes \'Etudes Sci. Publ. Math.}, (36):195--279, 1969.

\bibitem{NeemanRavenel}
A.~Neeman.
\newblock The connection between the {$K$}-theory localization theorem of
  {T}homason, {T}robaugh and {Y}ao and the smashing subcategories of
  {B}ousfield and {R}avenel.
\newblock {\em Ann. Sci. \'Ecole Norm. Sup. (4)}, 25(5):547--566, 1992.

\bibitem{Neeman}
A.~Neeman.
\newblock The {G}rothendieck duality theorem via {B}ousfield's techniques and
  {B}rown representability.
\newblock {\em J. Amer. Math. Soc.}, 9(1):205--236, 1996.

\bibitem{NeemanTriangulatedCategories}
A.~Neeman.
\newblock {\em Triangulated categories}, volume 148 of {\em Annals of
  Mathematics Studies}.
\newblock Princeton University Press, Princeton, NJ, 2001.

\bibitem{Rydh}
D.~Rydh.
\newblock Noetherian approximation of algebraic spaces and stacks.
\newblock arXiv:0904.0227v3.

\bibitem{SekiyaYamaura}
Y.~Sekiya and K.~Yamaura.
\newblock Tilting theoretical approach to moduli spaces over preprojective
  algebras.
\newblock {\em Algebr. Represent. Theory}, 16(6):1733--1786, 2013.

\bibitem{NCResolutions}
M.~Van Den~Bergh.
\newblock Non-commutative crepant resolutions.
\newblock In {\em The legacy of {N}iels {H}enrik {A}bel}, pages 749--770.
  Springer, Berlin, 2004.

\bibitem{3Dflops}
M.~Van~den Bergh.
\newblock Three-dimensional flops and noncommutative rings.
\newblock {\em Duke Math. J.}, 122(3):423--455, 2004.

\bibitem{Viehweg}
E.~Viehweg.
\newblock Rational singularities of higher dimensional schemes.
\newblock {\em Proc. Amer. Math. Soc.}, 63(1):6--8, 1977.

\bibitem{RCAA}
M.~Wemyss.
\newblock Reconstruction algebras of type {$A$}.
\newblock {\em Trans. Amer. Math. Soc.}, 363(6):3101--3132, 2011.

\bibitem{WemyssGL2}
M.~Wemyss.
\newblock The {${\rm GL}(2,\Bbb C)$} {M}c{K}ay correspondence.
\newblock {\em Math. Ann.}, 350(3):631--659, 2011.

\bibitem{RCAD1}
M.~Wemyss.
\newblock Reconstruction algebras of type {$D$} ({I}).
\newblock {\em J. Algebra}, 356:158--194, 2012.

\bibitem{RCAD2}
M.~Wemyss.
\newblock Reconstruction algebras of type {$D$} ({II}).
\newblock {\em Hokkaido Math. J.}, 42(2):293--329, 2013.

\end{thebibliography}
\end{document}